\newcommand{\punto}{\,\cdot\,}
\newcommand{\smallfrac}[2]{{\textstyle\frac{#1}{#2}}} 
\newtheorem{proposition}{Proposition}
\newtheorem{corollary}{Corollary}
\newtheorem{lemma}{Lemma}
\newtheorem{theorem}{Theorem}
\title{A Nystr\"om method for the two dimensional \\ Helmholtz hypersingular
equation}
\date{\today}
\author{V\'\i ctor Dom\'\i nguez\footnote{Departamento de Ingenier\'\i a
Matem\'atica e Inform\'atica, Universidad P\'ublica de Navarra, 31500 Tudela,
Spain. {\tt victor.dominguez@unavarra.es}. Partially supported by MICINN Project
MTM2010-21037}, Sijiang L. Lu\footnote{Department of Mathematical Sciences,
University of Delaware, USA. {\tt sjly@math.udel.edu}} \& Francisco--Javier
Sayas\footnote{Department of Mathematica Sciences, University of Delaware,
Newark, DE 19716, USA. {\tt fjsayas@math.udel.edu}. Partially supported by NSF
grant DMS 1216356.}}
\begin{document}

\maketitle

\begin{abstract}
In this paper we propose and analyze a class of simple Nystr\"om discretizations
of the hypersingular integral equation for the Helmholtz problem on domains of
the plane with smooth parametrizable boundary. The method depends on a parameter
(related to the staggering of two underlying grids) and we show that two choices
of this parameter produce convergent methods of order two, while all other
stable methods provide methods of order one. Convergence is shown for the
density (in uniform norm) and for the potential postprocessing of the solution.
Some numerical experiments are given to illustrate the performance of the
method.
\end{abstract}

\section{Introduction}

In this paper we propose and analyze a discretization of the hypersingular
integral equation for the Helmholtz equation on a smooth parametrizable simple
curve $\Gamma\subset \mathbb R^2$:
\begin{equation}\label{eq:0.1}
-\partial_{\nu}\int_\Gamma \partial_{\nu(\mathbf y)} H^{(1)}_0(k|\cdot-\mathbf
y|) \phi(\mathbf y)\mathrm d\Gamma(\mathbf y)= g \quad \mbox{on $\Gamma$.}
\end{equation}
Here $H^{(1)}_0$ is the Hankel function of the first kind and order zero, $k$ is
the wave number, $\partial_\nu$ is the normal derivative, and $g$ is data on
$\Gamma$.
The method is based on some simple ideas: 
\begin{itemize}
\item[(a)] The operator is first written as a bilinear integrodifferential form
acting on periodic functions
\begin{eqnarray*}
(\psi,\varphi) &\longmapsto &
\int_0^1 \int_0^1 H^{(1)}_0(k|\mathbf x(s)-\mathbf
x(t)|)\psi'(s)\varphi'(t)\mathrm d s\mathrm d t\\
& &-k^2\int_0^1 \int_0^1 H^{(1)}_0(k|\mathbf x(s)-\mathbf x(t)|)\mathbf
n(s)\cdot\mathbf n(t) \psi(s)\varphi(t)\mathrm ds\mathrm dt,
\end{eqnarray*}
$\mathbf x=(x_1,x_2)$ being a regular parametrization of $\Gamma$, and $\mathbf
n=(x_2',-x_1')$ the outward pointing normal vector.
\item[(b)] The principal part of the bilinear form (the one with the
derivatives) is formally approximated with a nonconforming Petrov-Galerkin
scheme,  using piecewise constant functions on two different uniform grids with
the same mesh-size $h$: $\{ (j-\frac12)h\}$ and $\{(j-\frac12+\varepsilon)h\}$.
Since the derivatives of piecewise constant functions are linear combinations of
Dirac delta distributions, that part of the bilinear form is just discretized
with the matrix
\[
\mathrm W_{i+1,j+1}-\mathrm W_{i,j+1}+\mathrm W_{i,j}-\mathrm W_{i+1,j},
\]
where $\mathrm W_{i,j}=  H^{(1)}_0(k|\mathbf b_i^\varepsilon-\mathbf b_j|)$, 
 $\mathbf b_i^\varepsilon:=\mathbf x((i-\smallfrac12+\varepsilon)h)$, and
$\mathbf b_j:=\mathbf x((j-\smallfrac12)h)$.
\item[(c)] The second part of the bilinear form (which has a weakly singular
logarithmic singularity) is discretized with the same Petrov-Galerkin scheme,
using midpoint quadrature to approximate the resulting integrals:
\begin{eqnarray*}
& & \hspace{-2cm}k^2H^{(1)}_0(k|\mathbf m_i^\varepsilon-\mathbf m_j|) \mathbf
n_i^\varepsilon\cdot\mathbf n_j \\
& & \approx k^2
\int_{(i-\frac12+\varepsilon)h}^{(i+\frac12+\varepsilon)h}
\int_{(j-\frac12)h}^{(j+\frac12)h} H^{(1)}_0(k|\mathbf x(s)-\mathbf
x(t)|)\mathbf n(s)\cdot\mathbf n(t) \mathrm ds\mathrm dt,
\end{eqnarray*}
where $\mathbf m_i^\varepsilon:=\mathbf x((i+\varepsilon)h)$, $\mathbf
n_i^\varepsilon:=h\,\mathbf n((i+\varepsilon)h)$ and $\mathbf m_j$ and $\mathbf
n_j$ are similarly defined.
\item[(d)] The right-hand side is tested with piecewise constant functions and
then midpoint quadrature is applied to all the resulting integrals.
\end{itemize}
As can be seen from the above formulas, this method leads to a very simple
discretization of \eqref{eq:0.1}, requiring no assembly process, no additional
numerical integration and no complicated data structures to handle the geometric
data.

The use of a two-grid Nystr\"om method for periodic logarithmic integral
equations goes back to the work of Jukka Saranen, Ian Sloan and their
collaborators \cite{saSc:1993, SaSl:1992,   SlBu:1992}. It was then discovered
that the values $\varepsilon=\pm 1/6$ provide superconvergent methods (of order
two) and that the values $\varepsilon=\pm 1/2$ lead to unstable discretizations.
The idea was further exploited in \cite{CeDoSa:2002}, showing that the methods
can be used on the weakly singular equations that appear in the Helmholtz
equation. The present paper shows how to transfer the same kind of ideas (and,
up to a point, the same type of analysis) to the hypersingular integral equation
for the Helmholtz equation. {\em The case of the Laplace hypersingular equation
is included in the present analysis}.

We will show that this discretization of the hypersingular integral operator is
stable in the $L^2$ norm for the underlying space of piecewise constant
functions as long as $\varepsilon\neq \pm 1/2$ (the value $\varepsilon=0$ is
excluded as a possibility from the very beginning, since it leads to evaluation
of the kernel functions on the singularity). We will also show that
$\varepsilon=\pm 1/6$ define methods of order two and that this order is
actually attained in a strong $L^\infty$ norm. The error analysis will be based
on Fourier techniques \cite{Arnold:1983, saSc:1993, SaVa:2002} combined with an
already quite extensive library of asymptotic expansions developed by two of the
authors of this paper with some other collaborators \cite{CeDoSa:2002,
DoRaSa:2008, DoSa:2001, DoSa:2001a}.

In a forthcoming paper \cite{DoLuSa:2012} we will show how to combine this
discretization method for the hypersingular equation with the original method
\cite{ CeDoSa:2002, saSc:1993, SaSl:1992, SlBu:1992} for the single layer
operator and with straightforward Nystr\"om discretization of the double layer
operator and its adjoint. This results in a compatible and
straightforward-to-code fully discrete Calder\'on Calculus for the two
dimensional Helmholtz equation on a finite number of disjoint smooth closed
curves. This discretization set has a strong flavor to low order Finite
Differences. This might make it an attractive option to build simple code for
scattering problems, when the simultaneous use of several boundary integral
operators is required. 

The paper is structured as follows. In Section \ref{sec:1} we present the method
for a class of periodic hypersingular equations that include \eqref{eq:0.1}
after parametrization. The method is then reinterpreted as a non-conforming
Petrov-Galerkin discretization with numerical quadrature.  In Section
\ref{sec:2}, we introduce the functional frame for the analysis of the method,
based on the theory of periodic pseudodifferential operators on periodic Sobolev
spaces. In Section \ref{sec:3} we present the stability result of this paper in
the form of an infimum-supremum condition.
In Sections \ref{sec:4} and \ref{sec:5}, we respectively give the consistency
and convergence error estimates  for the method. Section \ref{sec:7} contains
some numerical experiments, while we have gathered in Appendix \ref{sec:A} the
more technical proof of Proposition \ref{prop:A.4}.

\section{The equation and the method}\label{sec:1}

We are going to present the method for a hypersingular integral equation
associated to the two dimensional equation on a simple smooth curve. The
extension to a finite set of non-intersecting smooth curves is straightforward.
Since everything can be expressed with periodic integral equations, and the
analysis will be carried out at that level of generality, we start this work by
presenting the equation in that language.

\subsection{A class of integrodifferential operators and its discretization}

We consider two logarithmic kernel functions:
\begin{equation}\label{eq:1.0a}
V_\ell(s,t):=\smallfrac{\imath}{2\pi}A_\ell(s,t)\log(\sin^2
\pi(s-t))+K_\ell(s,t) ,\qquad \ell\in \{1,2\},
\end{equation}
where $A_1,A_2, K_1, K_2\in \mathcal C^\infty(\mathbb R^2)$ are 1-periodic in
both variables and
\begin{equation}\label{eq:1.0b}
A_1(s,s)\equiv \alpha\neq 0.
\end{equation}
We consider the associated integral operators
\begin{equation}\label{eq:1.0c}
\mathrm V_j \varphi:=\int_0^1 V_j(\punto,t)\varphi(t)\mathrm dt, 
\end{equation}
and the operator
\begin{equation}\label{eq:1.1}
\mathrm W:=-\mathrm D \mathrm V_1\mathrm D+\mathrm V_2, \qquad \mathrm
D\varphi:=\varphi'.
\end{equation}
Let us consider the space of smooth 1-periodic functions $\mathcal D:=\{
\varphi\in \mathcal C^\infty(\mathbb R)\,:\, \varphi(1+\punto)=\varphi\}.$
An important hypothesis is injectivity:
\begin{equation}\label{eq:1.2}
\varphi \in \mathcal D, \quad \mathrm W \varphi = 0 \qquad \Longrightarrow
\qquad \varphi =0.
\end{equation}
As we will see later on, this injectivity condition is enough to prove
invertibility of $\mathrm W$ in a wide range of Sobolev spaces.
Given a smooth 1-periodic function $g$, we look for $\varphi$ such that
\begin{equation}\label{eq:1.3}
\mathrm W\varphi=g.
\end{equation}

The discretization method uses four sets of discretization points. Let $N$ be a
positive integer, $h:=1/N$, and
\begin{equation}\label{eq:1.4}
s_i=(i-\smallfrac12)h, \quad t_i:=i\,h, \quad s_{i+\varepsilon}:=(i+\varepsilon
-\smallfrac12)h, \quad t_{i+\varepsilon}:=(i+\varepsilon)h, \quad i\in \mathbb
Z.
\end{equation}
(We will comment on $\varepsilon$ shortly.)
The discretization method looks for
\begin{equation}\label{eq:1.5}
(\varphi_0,\ldots,\varphi_{N-1})\in \mathbb C^N \quad\mbox{such that} \quad
\sum_{j=0}^{N-1} W_{ij}^\varepsilon \varphi_j = h g(t_{i+\varepsilon})\quad
i=0,\ldots,N-1,
\end{equation}
where
\begin{eqnarray}\nonumber
W_{ij}^\varepsilon&:=&
V_1(s_{i+1+\varepsilon},s_{j+1})-V_1(s_{i+\varepsilon},s_{j+1})-V_1(s_{
i+1+\varepsilon},s_j)+V_1(s_{i+\varepsilon},s_j)\\
& & + h^2 V_2(t_{i+\varepsilon},t_j).\label{eq:1.6}
\end{eqnarray}
Substitution of $\varepsilon$ by $\varepsilon+1$ produces the same method. The
option $\varepsilon \in \mathbb Z$ is not practicable, since it leads to
evaluations of the logarithmic kernels in their diagonal singularity. The method
for $\varepsilon=1/2$ ( $\varepsilon \in 1/2+\mathbb Z$) will not fit in our
analysis, that relies on stability properties of an $\varepsilon-$dependent
discretization of logarithmic operators that is unstable for $\varepsilon=1/2$.
(We will show numerical evidence that the value $\varepsilon=1/2$ is valid
though.)
All other methods will provide convergent schemes, with two superconvergent
cases. Namely, we will see that for smooth enough solutions, we can prove:
\[
\max_j | \varphi_j-\varphi(t_j)|=\mathcal O(h), \qquad \mbox{if } \varepsilon
\not \in \smallfrac12\mathbb Z
\]
(this excludes the non-practicable and unstable cases), and that
\[
\max_j | \varphi_j-\varphi(t_j)|=\mathcal O(h^2), \qquad \mbox{if }
\varepsilon\in \pm\smallfrac16+\mathbb Z.
\]
These results will be proved as Theorems \ref{the:5.3} and \ref{the:5.4}
respectively.

\subsection{A non-conforming Petrov-Galerkin method}

We next give some intuition on how to come up with the method
\eqref{eq:1.5}-\eqref{eq:1.6}. We can formally rewrite \eqref{eq:1.3} in
variational form
\begin{equation}\label{eq:1.7}
\int_0^1 \psi'(s) (\mathrm V_1\varphi')(s)\mathrm d s+\int_0^1 \psi(s) (\mathrm
V_2\varphi)(s)\mathrm d s=\int_0^1 \psi(s)g(s)\mathrm ds.
\end{equation}
Consider now the function $\chi_i$ that arises from 1-periodization of the
characteristic function of the interval $(s_i,s_{i+1})=(t_i-h/2,t_i+h/2)$, that
is,
\begin{equation}\label{eq:1.8}
\chi_i (1+\punto)=\chi_i, \qquad \chi_i\big|_{(s_i,s_{i+1})}\equiv 1, \qquad
\chi_i\big|_{[s_{i+1},s_{i+N}]}\equiv 0.
\end{equation}
We similarly define the functions $\chi_{i+\varepsilon}$ by periodizing the
characteristic functions of the intervals
$(s_{i+\varepsilon},s_{i+1+\varepsilon})=(t_{i+\varepsilon}-h/2,t_{i+\varepsilon
}+h/2)$. The weak derivatives of these functions can be expressed through the
use of Dirac delta distributions. In addition to understanding Dirac deltas as
periodic distributions (see Section \ref{sec:2.3}), we will admit the action of
Dirac deltas on any function that is continuous around the point where the delta
is concentrated. At the present stage, we only need to consider the functionals
\begin{equation}\label{eq:1.9}
\{ \delta_{s_i},\varphi\}:=\varphi(s_i), \qquad
\{\delta_{s_{i+\varepsilon}},\varphi\}:=\varphi(s_{i+\varepsilon}),
\end{equation}
acting on any 1-periodic function $\varphi$ that is smooth in a neighborhood of
$s_i$ and $s_{i+\varepsilon}$. Admitting formally that
$\chi_i'=\delta_{s_i}-\delta_{s_{i+1}}$ and
$\chi_{i+\varepsilon}'=\delta_{s_{i+\varepsilon}}-\delta_{s_{i+1+\varepsilon}}$,
we consider a non-conforming Petrov-Galerkin discretization of \eqref{eq:1.7}:
\begin{equation}\label{eq:1.10}
\varphi_h:=\sum_{j=0}^{N-1} \varphi_j \chi_j \quad\mbox{such that} \quad
\sum_{j=0}^N W_{ij}^{\varepsilon,\circ} \varphi_j=
\int_{t_{i+\varepsilon}-\frac{h}2}^{t_{i+\varepsilon}+\frac{h}2} g(s)\mathrm ds,
\qquad i=0,\ldots,N-1,
\end{equation}
where
\begin{eqnarray}\nonumber
W_{ij}^{\varepsilon,\circ}&:=& 
V_1(s_{i+1+\varepsilon},s_{j+1})-V_1(s_{i+\varepsilon},s_{j+1})-V_1(s_{
i+1+\varepsilon},s_j)+V_1(s_{i+\varepsilon},s_j)\\
&&+\int_{t_{i+\varepsilon}-\frac{h}2}^{t_{i+\varepsilon}+\frac{h}2}\int_{
t_j-\frac{h}2}^{t_j+\frac{h}2} V_2(s,t)\mathrm ds\mathrm dt.
\label{eq:1.11}
\end{eqnarray}
Note that
\[
\sum_{j=0}^{N-1} \Big(
\int_{t_{i+\varepsilon}-\frac{h}2}^{t_{i+\varepsilon}+\frac{h}2}\int_{t_j-\frac{
h}2}^{t_j+\frac{h}2} V_2(s,t)\mathrm ds\mathrm dt
\Big) \varphi_j=
\int_{t_{i+\varepsilon}-\frac{h}2}^{t_{i+\varepsilon}+\frac{h}2} (\mathrm V_2
\varphi_h)(s)\mathrm ds
\]
and that the leading term in $W_{ij}^{\varepsilon,\circ}$ can be understood as
the action
\begin{equation}\label{eq:1.20}
\{ \delta_{s_{i+\varepsilon}} -\delta_{s_{i+1+\varepsilon}}, \mathrm
V_1(\delta_{s_j}-\delta_{s_{j+1}})\}=\{ \chi_{i+\varepsilon}',\mathrm
V_1\chi_j'\}.
\end{equation}
The method \eqref{eq:1.5}-\eqref{eq:1.6} is recovered if we use midpoint
integration for all integrals in \eqref{eq:1.10}-\eqref{eq:1.11}.

\subsection{Relation to the Helmholtz equation}\label{sec:1.3}

Let $\mathbf x:\mathbb R \to \mathbb R^2$ be a smooth 1-periodic function such
that
$|\mathbf x'(s)|\neq 0$ for all $s$, and $\mathbf x(s)\neq\mathbf x(t)$ if
$|s-t|<1$.
The range of $\mathbf x$ is then a smooth closed curve $\Gamma$ in the plane. 
Let $\mathbf n(t)$ be the outward pointing normal vector at $\mathbf x(t)$ with
$|\mathbf n(t)|=|\mathbf x'(t)|$. Given a periodic function $\varphi$, we define
\begin{eqnarray}\label{eq:1.13}
\mathbb R^2\setminus\Gamma \ni \mathbf z\mapsto U(\mathbf z) &:=&
\frac\imath4\int_0^1 \nabla_{\mathbf y} H^{(1)}_0(k|\mathbf z-\mathbf
y|)\Big|_{\mathbf y=\mathbf x(t)}\hspace{-5pt}\cdot\mathbf n(t)\,\varphi(t)
\mathrm dt\\
&=& \frac{\imath k}4\int_0^1 H^{(1)}_1(k|\mathbf z-\mathbf
x(t)|)\,\frac{(\mathbf z-\mathbf x(t))\cdot\mathbf n(t)}{|\mathbf z-\mathbf
x(t)|} \varphi(t)\mathrm dt,\nonumber
\end{eqnarray}
where $H^{(1)}_0$ and $H^{(1)}_1$ are the Hankel functions of the first kind and
orders $0$ and $1$ respectively. The function $U$ is an outgoing solution of the
Helmholtz equation
\begin{equation}\label{eq:1.14}
\Delta U+k^2 U=0 \quad\mbox{in $\mathbb R^2\setminus\Gamma$}, \qquad\qquad
\frac{\partial U}{\partial r}-\imath k U = o(r^{-1/2}) \quad \mbox{as
$r=|\mathbf z|\to\infty$},
\end{equation}
with the asymptotic limit (the Sommerfeld radiation condition) holding uniformly
in all directions:
 $U$ is the double layer potential with (parametrized) density $\varphi$  (see
\cite[Section 2.1]{HsWe:2008}). The double layer potential is discontinuous
across $\Gamma$ but its normal derivative on $\Gamma$ coincides from both sides.
If we define
\[
(\mathrm W\varphi)(s) :=-(\nabla U(\mathbf x(s) )\cdot\mathbf n(s),
\]
then,
\begin{eqnarray}\nonumber
(\mathrm W\varphi)(s)&=&-\frac\imath4 \frac{\mathrm d}{\mathrm ds} \int_0^1
H^{(1)}_0(k |\mathbf x(s)-\mathbf x(t)|)\varphi'(t)\mathrm dt \\
& & -\frac{\imath k^2}4\int_0^1  H^{(1)}_0(k |\mathbf x(s)-\mathbf x(t)|)
\mathbf n(s)\cdot\mathbf n(t) \,\varphi(t)\mathrm d t.\label{eq:1.15}
\end{eqnarray}
This is just the parametrized form of a well known formula: see \cite[Theorem
3.3.22]{SauSch:2011} or \cite[Exercise 9.6]{McLean:2000}. Using the asymptotic
behavior of Hankel functions close to the singularity, it can be shown that the
weakly singular kernel $H^{(1)}_0(k |\mathbf x(s)-\mathbf x(t)|)$ can be
decomposed as in \eqref{eq:1.0a} with $A_1(s,s)\equiv \imath/2$ (see
\eqref{eq:1.0b}). Therefore, the
 operator $\mathrm W$ in \eqref{eq:1.15} fits in the frame of \eqref{eq:1.1}.

{\em The operator $\mathrm W$ satisfies the injectivity condition \eqref{eq:1.2}
if and only if $k^2$ is not a Neumann eigenvalue of the Laplace operator in the
interior of $\Gamma$} \cite[Section 2.1]{HsWe:2008}. In those cases, the
solution of the interior-exterior Helmholtz equation \eqref{eq:1.14} with
Neumann boundary condition $\partial_\nu u= f$ can be represented with the
double layer ansatz \eqref{eq:1.13},  $\varphi$ being a solution of equation
\eqref{eq:1.3} with $g:=-f\circ\mathbf x$. If we then apply the discretization
\eqref{eq:1.5}-\eqref{eq:1.6}, we can construct a fully discrete potential
representation, when we substitute $\varphi$ by
$\varphi_h:=\sum_{j=0}^{N-1}\varphi_j\chi_j$ in \eqref{eq:1.13} and apply
midpoint integration:
\begin{equation}\label{eq:1.17}
U_h(\mathbf z):=\frac{\imath k}4 \,h\, \sum_{j=0}^{N-1} H^{(1)}_1(k|\mathbf
z-\mathbf x(t_j)|)\, \frac{(\mathbf z-\mathbf x(t_j))\cdot\mathbf
n(t_j)}{|\mathbf z-\mathbf x(t_j)|}\, \varphi_j.
\end{equation}

\section{Functional frame}\label{sec:2}

\subsection{Asymptotics of hypersingular operators}

Consider the space $\mathcal D$ of periodic $\mathcal C^\infty$ complex valued
functions of one variable, endowed with the metric that imposes uniform
convergence of all derivatives \cite[Section 5.2]{SaVa:2002}. A periodic
distribution is an element of $\mathcal D'$, the dual space of $\mathcal D$.
Given $u \in \mathcal D'$, we consider its Fourier coefficients
\begin{equation}\label{eq:2.1}
\widehat u(m):= \langle u,\phi_m\rangle_{\mathcal D'\times\mathcal D}, \qquad
\phi_m(t):=\exp(2\pi\imath m t), \qquad m \in \mathbb Z.
\end{equation}
The periodic Sobolev space of order $r\in \mathbb R$ is
\begin{equation}\label{eq:2.2}
H^r:=\{ u\in \mathcal D'\,:\, \| u\|_r<\infty\}, \qquad \mbox{where}\qquad \|
u\|_r^2:=|\widehat u(0)|^2+\sum_{m\neq 0} |m|^{2r}|\widehat u(m)|^2.
\end{equation}
(From here on, the symbol $\sum_{m\neq 0}$ refers to a sum over all integers
except zero.) An extensive treatment of these spaces can be found in the
monograph \cite{SaVa:2002}. Let us just mention that $H^p\subset H^q$ for $p>q$,
with dense and compact injection. Also, $H^0$ can be identified with the space
of 1-periodic functions that are locally square integrable or equivalently, with
the 1-periodization of $L^2(0,1)$.

We say that an operator $\mathrm L:\mathcal D'\to\mathcal D'$ is a periodic
pseudodifferential operator of order $n$, and we write for short $\mathrm L\in
\mathcal E(n)$, when $\mathrm L: H^r \to H^{r-n}$ for all $r$. It then follows
from \cite[Paragraph 7.6.1]{SaVa:2002} that the logarithmic operators
\eqref{eq:1.0c} can be extended to act on all periodic distributions and,
consequently, so can $\mathrm W$. Moreover, $\mathrm V_1,\mathrm V_2 \in
\mathcal E(-1)$ and $\mathrm W\in \mathcal E(1)$.

A first group of pseudodifferential operators that we will use extensively is
that of multiplication operators. Given $a\in \mathcal D$ we define the operator
$\boldsymbol a \in \mathcal E(0)$ by
$\boldsymbol a u:=a\, u$.
The periodic Hilbert transform
\begin{equation}\label{eq:2.3}
\mathrm H u:=\sum_{m\neq 0} \mathrm{sign}(m) \widehat u(m) \phi_m
\end{equation}
is clearly a periodic pseudodifferential operator of order zero. We also
consider the operators for $n \in \mathbb Z$:
\begin{equation}\label{eq:2.4}
\mathrm D_n u:=\sum_{m\neq 0} (2\pi \imath m)^n \widehat u(m)\phi_m, \qquad
\mathrm D_n \in \mathcal E(n).
\end{equation}
It is easy to note that  $\mathrm D_1 =\mathrm D$ is the differentiation
operator and $\mathrm D_{-1}$ is a weak form of the following
antidifferentiation operator
\[
(\mathrm D_{-1}u)(s)=\int_0^s (u(t)-\widehat u(0))\mathrm dt \qquad \forall u\in
H^0.
\]
In the next lemma we collect some elementary properties of these operators.

\begin{lemma}\label{lemma:2.1}
The following properties of the operators $\mathrm D_n$ in \eqref{eq:2.4} hold:
\begin{enumerate}
\item[\rm{(a)}] $\mathrm D_0 u=u-\widehat u(0)$ for all $u$.
\item[\rm{(b)}] For all $n$ and $u$, $\mathrm D_{-n}\mathrm D_n u=u-\widehat
u(0)$.
\item[\rm{(c)}] For all $n,m$, $\mathrm D_n \mathrm D_m = \mathrm D_{n+m}$.
\item[\rm{(d)}] For all $n$, $\mathrm D_n\mathrm H =\mathrm H \mathrm D_n$.
\item[\rm{(e)}] For all $a\in \mathcal D$, $\mathrm D_1\boldsymbol a =
\boldsymbol a \mathrm D_1+\boldsymbol a'$.
\end{enumerate}
\end{lemma}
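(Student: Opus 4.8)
The plan is to verify each of the five identities by passing to Fourier coefficients, which reduces everything to elementary arithmetic on the symbols $(2\pi\imath m)^n$. First, for part (a), I would observe that the definition \eqref{eq:2.4} of $\mathrm D_0$ sums $(2\pi\imath m)^0 = 1$ against $\widehat u(m)\phi_m$ for $m\neq 0$, which is exactly the Fourier series of $u$ with the $m=0$ term removed; since that term is $\widehat u(0)\phi_0 = \widehat u(0)$, the identity $\mathrm D_0 u = u - \widehat u(0)$ follows immediately. For part (c), the computation is that $\mathrm D_n\mathrm D_m u$ has $m$-th Fourier coefficient (for $m\neq 0$) equal to $(2\pi\imath m)^n(2\pi\imath m)^m\widehat u(m) = (2\pi\imath m)^{n+m}\widehat u(m)$, which is the $m$-th coefficient of $\mathrm D_{n+m}u$; both operators kill the zero mode, so they agree. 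Part (b) is then just the special case $\mathrm D_{-n}\mathrm D_n = \mathrm D_0$ of (c) together with (a). Part (d) follows the same template: $\mathrm H$ multiplies the $m$-th coefficient by $\mathrm{sign}(m)$ and $\mathrm D_n$ by $(2\pi\imath m)^n$, and scalar multiplication commutes, so both compositions have $m$-th coefficient $\mathrm{sign}(m)(2\pi\imath m)^n\widehat u(m)$ and both annihilate the constant mode.

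Part (e) is the only identity that does not live purely on the Fourier side, and it is the one I expect to require a genuine (if short) argument rather than bookkeeping. The plan is to argue by density: since $\mathcal D$ is dense in every $H^r$ and all three operators $\mathrm D_1\boldsymbol a$, $\boldsymbol a\mathrm D_1$, $\boldsymbol a'$ are continuous between appropriate Sobolev spaces (multiplication by a $\mathcal C^\infty$ periodic function is in $\mathcal E(0)$ and $\mathrm D_1\in\mathcal E(1)$), it suffices to check the identity on $u\in\mathcal D$. There it is simply the Leibniz rule: $(au)' = a'u + au'$, i.e. $\mathrm D_1(\boldsymbol a u) = \boldsymbol a' u + \boldsymbol a(\mathrm D_1 u)$, rearranged as $\mathrm D_1\boldsymbol a = \boldsymbol a\mathrm D_1 + \boldsymbol a'$. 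One subtlety to flag is that $\mathrm D_1$ as defined in \eqref{eq:2.4} is genuine differentiation on smooth functions — this was already noted in the text right after \eqref{eq:2.4} — so no discrepancy between $\mathrm D_1$ and $\mathrm d/\mathrm ds$ arises; on $\mathcal D$ the Fourier-multiplier operator $\mathrm D_1$ and classical differentiation coincide because differentiating $\phi_m$ gives $2\pi\imath m\,\phi_m$ and the $m=0$ term differentiates to zero anyway.

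In summary, the main obstacle is essentially notational rather than mathematical: one must be careful that $\mathrm D_0$ and $\mathrm D_{-n}\mathrm D_n$ are not the identity but the identity minus the mean, so that parts (a)–(d) are stated with the $\widehat u(0)$ correction, and that the density argument in (e) is invoked to move the classical Leibniz rule from $\mathcal D$ to all of $\mathcal D'$. None of the five steps requires more than a line or two once the Fourier-coefficient description of each operator is written down, so I would present the proof compactly, grouping (a)–(d) together and treating (e) separately.
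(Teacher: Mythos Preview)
Your proposal is correct, and in fact the paper does not give a proof of this lemma at all --- it is stated as a collection of elementary properties and left without argument. Your Fourier-coefficient verification for (a)--(d) and Leibniz-plus-density argument for (e) is exactly the natural justification; the only cosmetic issue is that in your sketch of (c) you use $m$ for both the operator index and the Fourier mode, which you would want to clean up in a written version.
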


The following results show how logarithmic operators and the hypersingular
operators $\mathrm W$ can be represented up to operators of arbitrarily negative
order as a linear combination of compositions of the simple operators given
above.

\begin{proposition}\label{prop:2.2} Let 
\begin{equation}\label{eq:2.30}
\mathrm V u :=\frac{\imath}{2\pi}\int_0^1 A(\punto,t)
\log(\sin^2(\pi(\punto-t)))\,u(t)\mathrm dt+\int_0^1 K(\punto,t)\,u(t)\mathrm
dt,
\end{equation}
where $A,K\in \mathcal C^\infty(\mathbb R^2)$ are 1-periodic in each variable.
Then there exists a sequence $\{ a_n\}_{n\ge 1}\subset \mathcal D$ such that
for all $M$,
\begin{equation}\label{eq:2.31}
\mathrm V = \sum_{n=1}^{M-1} \boldsymbol a_n \mathrm H \mathrm D_{-n} + \mathrm
K_M, \qquad \mathrm K_M\in \mathcal E(-M).
\end{equation}
Moreover $a_1(s)= A(s,s)$.
\end{proposition}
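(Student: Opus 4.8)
The plan is to isolate the logarithmic part of $\mathrm V$ and expand its kernel in a way that exposes the Hilbert transform. Write $\mathrm V=\mathrm V_{\log}+\mathrm K$, where $\mathrm K$ (the smooth-kernel part) is in $\mathcal E(-\infty)=\bigcap_M\mathcal E(-M)$ — this is standard for integral operators with $\mathcal C^\infty$ kernels, since the Fourier coefficients of the kernel decay faster than any polynomial — so it can be absorbed into $\mathrm K_M$ for every $M$ and plays no further role. The analysis therefore concentrates on $\mathrm V_{\log}u=\tfrac{\imath}{2\pi}\int_0^1 A(\cdot,t)\log(\sin^2(\pi(\cdot-t)))u(t)\,\mathrm dt$.

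The key analytic fact I would invoke is the Fourier expansion of the periodic logarithmic kernel:
\begin{equation*}
-\tfrac{1}{2\pi}\log(\sin^2(\pi\theta))=\tfrac{1}{\pi}\log 2+\sum_{m\neq 0}\tfrac{1}{2\pi|m|}\phi_m(\theta).
\end{equation*}
Substituting this into $\mathrm V_{\log}$ and expanding $A(s,t)$ as a Taylor polynomial in the second variable around $t=s$ up to order $M-2$, with remainder, one rewrites $A(s,t)=\sum_{j=0}^{M-2}\tfrac{1}{j!}(\partial_2^j A)(s,s)(t-s)^j+R_M(s,t)$. The contribution of $(t-s)^j$ paired with the $|m|^{-1}$-type Fourier multiplier produces, after integration in $t$, an operator whose symbol behaves like $m^{-1-j}$ up to a bounded factor; organizing the $m>0$ and $m<0$ halves separately is exactly what turns the $|m|$ into the $\mathrm{sign}(m)$ of the Hilbert transform composed with powers of $\mathrm D_{-1}$. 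Concretely each term becomes a multiplication by the coefficient $\boldsymbol b_j$ (a multiple of $(\partial_2^j A)(s,s)$) composed with $\mathrm H\mathrm D_{-1-j}$ plus an operator of order $-M$; collecting and relabeling $n=j+1$ gives the stated form, and tracking the $j=0$ term shows $a_1(s)=A(s,s)$ (the constant $\tfrac{1}{\pi}\log 2$ piece has a smooth kernel and is swept into $\mathrm K_M$). The remainder $R_M$ vanishes to order $M-1$ at the diagonal, which is what makes the corresponding operator lie in $\mathcal E(-M)$.

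The main obstacle — and where I would spend the most care — is controlling the remainder terms and the error incurred by replacing $A(s,t)$ with its diagonal Taylor data. One must show rigorously that an integral operator with kernel $(t-s)^{M-1}B(s,t)\log(\sin^2(\pi(s-t)))$, with $B$ smooth and periodic, maps $H^r\to H^{r+M}$; equivalently that its symbol (or the Fourier coefficients of the kernel in an appropriate sense) decays like $|m|^{-M}$. This is a commutator/smoothing estimate: differentiating the kernel $M-1$ times kills the $(t-s)^{M-1}$ factor and the surviving $\log$-singularity still defines an order $-1$ operator, so the product gains $M$ orders of smoothness — but making this precise in the periodic pseudodifferential calculus requires either a direct Fourier-coefficient estimate or an induction on $M$ peeling off one term at a time. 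I expect the cleanest route is induction: having produced $\boldsymbol a_1\mathrm H\mathrm D_{-1}$, verify that $\mathrm V-\boldsymbol a_1\mathrm H\mathrm D_{-1}$ is again of the form \eqref{eq:2.30} but with a kernel vanishing one order higher on the diagonal (hence in $\mathcal E(-2)$, and amenable to the same treatment), and iterate; this also matches the style of the asymptotic-expansion machinery the paper says it is building on. Finally, I would note that properties (c) and (d) of Lemma~\ref{lemma:2.1} are used freely to commute and combine the $\mathrm D_n$ and $\mathrm H$ factors into the canonical order displayed in \eqref{eq:2.31}.
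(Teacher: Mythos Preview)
The paper does not give its own proof of this proposition; it simply cites \cite[Proposition A.1]{DoSa:2001} and \cite[Chapter 6]{SaVa:2002}. Your sketch is correct and is precisely the line of argument those references follow: peel off the smooth-kernel part, use the Fourier series of $\log(\sin^2\pi\theta)$ to recognize the diagonal convolution as $\mathrm H\mathrm D_{-1}$, Taylor-expand $A(s,t)$ about the diagonal to generate the successive $\boldsymbol a_n\mathrm H\mathrm D_{-n}$, and show the remainder is of order $-M$ (the inductive ``peel one term at a time'' version you describe is exactly how \cite{DoSa:2001} organizes it). One small technical point to tighten: the monomials $(t-s)^j$ are not $1$-periodic, so in the periodic calculus one replaces them by $\sin^j(\pi(s-t))$ (or equivalently works with $e^{\pm 2\pi\imath(s-t)}-1$); this changes nothing structurally but is what makes the remainder estimate and the identification with $\mathrm H\mathrm D_{-n}$ clean.
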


\begin{proof}
See \cite[Proposition A.1]{DoSa:2001} or similar arguments in \cite[Chapter
6]{SaVa:2002}. 
\end{proof}

\begin{proposition}\label{prop:2.3}
Let $\mathrm W$ be the operator in \eqref{eq:1.1}. Then, there exists a sequence
$\{b_n\}_{n\ge 0} \subset \mathcal D$ such that for all $M$,
\[
\mathrm W = - \alpha \mathrm H \mathrm D_1 + \sum_{n=0}^{M-1} \boldsymbol b_n
\mathrm H \mathrm D_{-n} + \mathrm K_M, \qquad \mathrm K_M\in \mathcal E(-M),
\]
where $0\neq \alpha\in \mathbb C$ is the constant in \eqref{eq:1.0b}.
\end{proposition}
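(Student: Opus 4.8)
The plan is to reduce everything to the symbolic calculus of Proposition~\ref{prop:2.2} together with the elementary operational identities of Lemma~\ref{lemma:2.1}. Writing $\mathrm W=-\mathrm D_1\mathrm V_1\mathrm D_1+\mathrm V_2$ (recall $\mathrm D=\mathrm D_1$), I would first apply Proposition~\ref{prop:2.2} to $\mathrm V_1$ and to $\mathrm V_2$: for a parameter $M'$ to be fixed at the end there are sequences $\{a_n\},\{c_n\}\subset\mathcal D$ with
\[
\mathrm V_1=\sum_{n=1}^{M'-1}\boldsymbol a_n\,\mathrm H\,\mathrm D_{-n}+\mathrm K^{(1)}_{M'},\qquad
\mathrm V_2=\sum_{n=1}^{M'-1}\boldsymbol c_n\,\mathrm H\,\mathrm D_{-n}+\mathrm K^{(2)}_{M'},
\]
where $\mathrm K^{(1)}_{M'},\mathrm K^{(2)}_{M'}\in\mathcal E(-M')$ and, crucially, $a_1(s)=A_1(s,s)\equiv\alpha$ by \eqref{eq:1.0b} (the value $c_1(s)=A_2(s,s)$ plays no special role here).

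The core computation is the evaluation of $\mathrm D_1\boldsymbol a_n\mathrm H\mathrm D_{-n}\mathrm D_1$. By Lemma~\ref{lemma:2.1}(c), $\mathrm H\mathrm D_{-n}\mathrm D_1=\mathrm H\mathrm D_{1-n}$; then, commuting the remaining $\mathrm D_1$ to the right with Lemma~\ref{lemma:2.1}(e), (d) and (c),
\[
\mathrm D_1\boldsymbol a_n\,\mathrm H\,\mathrm D_{1-n}=(\boldsymbol a_n\mathrm D_1+\boldsymbol a_n')\,\mathrm H\,\mathrm D_{1-n}=\boldsymbol a_n\,\mathrm H\,\mathrm D_{2-n}+\boldsymbol a_n'\,\mathrm H\,\mathrm D_{1-n}.
\]
For $n=1$ this is $\boldsymbol a_1\mathrm H\mathrm D_1+\boldsymbol a_1'\mathrm H\mathrm D_0=\alpha\,\mathrm H\,\mathrm D_1$, since $a_1\equiv\alpha$ is constant (hence $\boldsymbol a_1'=0$ and $\boldsymbol a_1$ is multiplication by the constant $\alpha$); with the outer minus sign this produces exactly the asserted leading term $-\alpha\mathrm H\mathrm D_1$, while for $n\ge2$ both resulting summands have order $\le0$. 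Feeding this into $-\mathrm D_1\mathrm V_1\mathrm D_1$ and adding the expansion of $\mathrm V_2$, I would reindex the family $-\boldsymbol a_n\mathrm H\mathrm D_{2-n}$ by $m=n-2\ge0$ (using $\mathrm H\mathrm D_0=\mathrm H=\mathrm H\mathrm D_{-0}$) and the family $-\boldsymbol a_n'\mathrm H\mathrm D_{1-n}$ by $m=n-1\ge1$, and keep the $\boldsymbol c_n$ terms as they are; all the finitely many terms of order $\le0$ then combine into a finite sum $\sum_m\boldsymbol b_m\mathrm H\mathrm D_{-m}$ with $b_m\in\mathcal D$ (explicitly $b_0=-a_2$ and $b_m=-a_{m+2}-a_{m+1}'+c_m$ for $m\ge1$), which depend on the fixed Proposition~\ref{prop:2.2} sequences and not on $M'$.

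The one point that needs care, and which I expect to be the only mildly delicate step, is the order bookkeeping of the remainder: since $\mathrm D_1\in\mathcal E(1)$ and $\mathcal E$ is closed under composition, the two outer derivatives raise the order of the smoothing term by two, so $\mathrm D_1\mathrm K^{(1)}_{M'}\mathrm D_1\in\mathcal E(2-M')$. Hence, given a target $M$, one chooses $M'=M+2$, whereupon $\mathrm D_1\mathrm K^{(1)}_{M+2}\mathrm D_1\in\mathcal E(-M)$, $\mathrm K^{(2)}_{M+2}\in\mathcal E(-M-2)\subset\mathcal E(-M)$, and the tail contributions $\boldsymbol b_m\mathrm H\mathrm D_{-m}$ with $m\ge M$ lie in $\mathcal E(-m)\subset\mathcal E(-M)$; collecting all of these into a single operator $\mathrm K_M\in\mathcal E(-M)$ leaves exactly $\mathrm W=-\alpha\mathrm H\mathrm D_1+\sum_{n=0}^{M-1}\boldsymbol b_n\mathrm H\mathrm D_{-n}+\mathrm K_M$. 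Since $M$ is arbitrary and the $\boldsymbol b_n$ do not depend on it, this is the claim.
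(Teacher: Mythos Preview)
Your proof is correct and follows exactly the route the paper indicates: the paper's own proof consists of the single sentence ``This is just a direct consequence of Proposition~\ref{prop:2.2} and Lemma~\ref{lemma:2.1},'' and you have spelled out precisely that computation, including the commutation $\mathrm D_1\boldsymbol a_n\mathrm H\mathrm D_{-n}\mathrm D_1=\boldsymbol a_n\mathrm H\mathrm D_{2-n}+\boldsymbol a_n'\mathrm H\mathrm D_{1-n}$, the identification of the leading term via $a_1\equiv\alpha$, and the order bookkeeping for the remainders. There is nothing to add.
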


\begin{proof}
This is just a direct consequence of Proposition \ref{prop:2.2} and Lemma
\ref{lemma:2.1}.
\end{proof}

\begin{proposition}
Hypothesis \eqref{eq:1.2} implies invertibility of $\mathrm W : H^r \to H^{r-1}$
for all $r$.
\end{proposition}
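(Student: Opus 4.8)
The plan is to read off the leading behaviour of $\mathrm W$ from Proposition \ref{prop:2.3}, deduce that $\mathrm W:H^r\to H^{r-1}$ is Fredholm of index zero, and then promote the injectivity hypothesis \eqref{eq:1.2} from $\mathcal D$ to each Sobolev space $H^r$ by a regularity argument; invertibility and continuity of the inverse then follow from the Fredholm alternative and the bounded inverse theorem. Fix $M\ge1$ and write, as in Proposition \ref{prop:2.3}, $\mathrm W=\mathrm A+\mathrm C$ with $\mathrm A:=-\alpha\,\mathrm H\mathrm D_1$ and $\mathrm C:=\sum_{n=0}^{M-1}\boldsymbol b_n\mathrm H\mathrm D_{-n}+\mathrm K_M$. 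Every term of $\mathrm C$ lies in $\mathcal E(0)$ (the least smoothing one being $\boldsymbol b_0\mathrm H\mathrm D_0$, since $\mathrm K_M\in\mathcal E(-M)$ and $\boldsymbol b_n\mathrm H\mathrm D_{-n}\in\mathcal E(-n)$ for $n\ge1$), so $\mathrm C:H^r\to H^r$ is bounded and, composed with the compact embedding $H^r\hookrightarrow H^{r-1}$, defines a compact operator $\mathrm C:H^r\to H^{r-1}$.

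First I would treat the leading operator. Since $\mathrm A u=-\alpha\sum_{m\neq0}2\pi\imath\,|m|\,\widehat u(m)\,\phi_m$, the symbol $2\pi|\alpha|\,|m|$ exactly matches the scaling of the Sobolev norms, so $\mathrm A$ maps $H^r$ boundedly onto the closed subspace $\{v\in H^{r-1}:\widehat v(0)=0\}$, is bounded below on $\{u\in H^r:\widehat u(0)=0\}$, and has kernel equal to the constants. Hence $\mathrm A:H^r\to H^{r-1}$ is Fredholm with one-dimensional kernel and cokernel, i.e.\ of index zero, and by stability of the index under compact perturbations so is $\mathrm W=\mathrm A+\mathrm C$. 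I would also record the explicit parametrix $\mathrm B:=-\alpha^{-1}\mathrm D_{-1}\mathrm H\in\mathcal E(-1)$: using Lemma \ref{lemma:2.1} and the identity $\mathrm H^2u=u-\widehat u(0)$ (immediate from \eqref{eq:2.3}) one computes $\mathrm B\mathrm A=\mathrm I-\mathrm P_0$, where $\mathrm P_0u:=\widehat u(0)$ is the rank-one (hence smoothing) projection onto constants; therefore $\mathrm B\mathrm W=\mathrm I-\mathrm P_0+\mathrm B\mathrm C$ with $\mathrm B\mathrm C\in\mathcal E(-1)$.

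It then remains to show $\mathrm W$ is injective on every $H^r$, which I would do by showing $\ker\mathrm W$ is independent of $r$ and consists of smooth functions. If $u\in H^r$ and $\mathrm W u=0$, then $\mathrm B\mathrm W u=0$, i.e.\ $u=\mathrm P_0u-\mathrm B\mathrm C u$; the right-hand side belongs to $H^{r+1}$ because $\mathrm P_0$ is smoothing and $\mathrm B\mathrm C\in\mathcal E(-1)$, so $u\in H^{r+1}$. Iterating this identity, $u\in\bigcap_{s\in\mathbb R}H^s=\mathcal D$, and then hypothesis \eqref{eq:1.2} forces $u=0$. Combined with the index-zero Fredholm property, this gives that $\mathrm W:H^r\to H^{r-1}$ is bijective, and the open mapping theorem makes $\mathrm W^{-1}$ bounded.

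The whole argument is the standard ``elliptic leading part plus compact remainder plus injectivity'' scheme for periodic pseudodifferential operators, so none of the steps is genuinely hard; the only point that needs a little care is the bootstrap, i.e.\ keeping track of the orders of $\mathrm B$, $\mathrm P_0$ and $\mathrm B\mathrm C$ so that each use of $u=\mathrm P_0u-\mathrm B\mathrm C u$ really gains a full derivative and the iteration exhausts all Sobolev indices. That, together with justifying $\bigcap_{s}H^s=\mathcal D$, is where I expect the (mild) obstacle to lie.
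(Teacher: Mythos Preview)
Your argument is correct and follows essentially the same route as the paper: decompose $\mathrm W=-\alpha\mathrm H\mathrm D_1+\mathrm K_0$ with $\mathrm K_0\in\mathcal E(0)$, observe the leading part is Fredholm of index zero so that $\mathrm W$ is too, and bootstrap regularity of elements of the kernel by applying the parametrix $\mathrm H\mathrm D_{-1}$ (your $-\alpha\,\mathrm B$, up to the commutation of Lemma~\ref{lemma:2.1}(d)) to gain one Sobolev order per step. Your write-up is somewhat more explicit than the paper's (naming $\mathrm B$, spelling out the kernel/cokernel of $\mathrm A$, invoking stability of the index under compact perturbations), but the mathematical content is the same.
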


\begin{proof}
Consider the lowest order expansion of Proposition \ref{prop:2.3}, namely
$\mathrm W= -\alpha \mathrm H \mathrm D + \mathrm K_0$ with $\mathrm K_0 \in
\mathcal E(0)$. 
It is clear that $-\alpha\mathrm H \mathrm D: H^r\to H^{r-1}$ is Fredholm of
index zero, and therefore so is $\mathrm W$.

Let now $u\in H^r$ be such that $\mathrm W u=0$. Applying that $\mathrm H^2
v=v-\widehat v(0)$ for all $v$, and Lemma \ref{lemma:2.1} (b) and (d), it
follows that
\[
0 = \mathrm H \mathrm D_{-1} \mathrm W u = -\alpha u + w, \qquad
\mbox{where}\quad w:=\alpha\widehat u(0)+\mathrm H \mathrm D_{-1} \mathrm K_0 u
\in H^{r+1}. 
\] 
This means that $u \in H^s$ for all $s$ and therefore $u\in \mathcal D$.
Hypothesis \eqref{eq:1.2} implies then that $u=0$. Therefore $\mathrm W: H^r \to
H^{r-1}$ is injective and, by the Fredholm Alternative, it is invertible.
\end{proof}

\subsection{Variational formulation of the discrete method}\label{sec:2.3}

For a fixed $t\in \mathbb R$ we can define the Dirac delta distribution
$\delta_t$ by its action on elements of $\mathcal D$, $\langle
\delta_t,\phi\rangle_{\mathcal D'\times\mathcal D}=\phi(t)$. Using the Sobolev
embedding theorem \cite[Lemma 5.3.2]{SaVa:2002}, we can prove that $\delta_t \in
H^r$ for all $r<-1/2$. However, this does not allow us to apply the Dirac delta
to functions that are piecewise smooth on points where they do not have jumps.
If $u$ is a 1-periodic function that is continuous in a neighborhood of $t$, we
will write $\{ \delta_t,u\}=u(t)$. Note that in general this is not a duality
product $\mathcal D'\times \mathcal D$ or $H^r\times H^{-r}$. With this
definition, we can admit the Dirac deltas $\delta_{s_i}$ and
$\delta_{s_{i+\varepsilon}}$ in \eqref{eq:1.9}, as well as formula
\eqref{eq:1.20}.

Let $\mathbb P_0$ be the space of constant functions.
We then introduce three $N$-dimensional spaces:
\begin{eqnarray*}
S_h &:=& \mathrm{span}\{ \chi_i\,:\,i=0,\ldots,N-1\}=\{ u_h \in
H^0\,:\,u_h|_{(s_i,s_{i+1})}\in \mathbb P_0\quad \forall i\},\\
S_{h,\varepsilon} &:=& \mathrm{span}\{
\chi_{i+\varepsilon}\,:\,i=0,\ldots,N-1\}=\{ u_h \in
H^0\,:\,u_h|_{(s_{i+\varepsilon},s_{i+1+\varepsilon})}\in \mathbb P_0\quad
\forall i\},\\
S_h^{-1}&:=& \mathrm{span}\{ \delta_{s_i}\,:\,i=0,\ldots,N-1\} =\{ u_h'\,:\, u_h
\in S_h\}\oplus\mathrm{span}\,\{ d_h\},
\end{eqnarray*}
where
\begin{equation}\label{eq:2.5}
d_h:=h\sum_{j=0}^{N-1} \delta_{s_j}.
\end{equation}
Finally, we consider the discrete operators
\begin{equation}\label{eq:2.6}
Q_h^{-1} u:= h \sum_{j=0}^{N-1} u(t_j) \delta_{t_j} \qquad
Q_{h,\varepsilon}^{-1} u:=
h \sum_{j=0}^{N-1} u(t_{j+\varepsilon}) \delta_{t_{j+\varepsilon}},
\end{equation}
that are well defined for all periodic functions that are continuous around
$\cup\{ t_j\}=h\mathbb Z$ and $\cup\{ t_{j+\varepsilon}\}=h(\varepsilon+ \mathbb
Z)$ respectively. In particular, we can apply $Q_h^{-1}$ to elements of $S_h$
and $Q_{h,\varepsilon}^{-1}$ to elements of $S_{h,\varepsilon}$.

\begin{proposition}\label{prop:2.5}
Let $\varepsilon\not\in\mathbb Z$ and let $g$ be continuous in a neighborhood of
$h(\varepsilon+\mathbb Z)$. Then the discrete variational problem
\begin{equation}\label{eq:2.7}
\begin{array}{l} \mbox{find } \varphi_h \in S_h  \mbox{ such that}\\ 
\{  \psi_h',\mathrm V_1 \varphi_h'\}+
\{Q_{h,\varepsilon}^{-1} \psi_h,\mathrm V_2Q_h^{-1}\varphi_h\} = \{
Q_{h,\varepsilon}^{-1}\psi_h,g\}\quad \forall\psi_h\in S_{h,\varepsilon},
\end{array}
\end{equation}
is equivalent to looking for $\varphi_h=\sum_{j=0}^{N-1} \varphi_j \chi_j$,
where  equations \eqref{eq:1.5}-\eqref{eq:1.6} are satisfied.
\end{proposition}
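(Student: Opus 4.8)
The plan is to expand the discrete variational equation \eqref{eq:2.7} by inserting the basis functions $\varphi_h=\sum_j\varphi_j\chi_j$ and $\psi_h=\chi_{i+\varepsilon}$ (one test function at a time, $i=0,\ldots,N-1$) and to check that the resulting $N$ scalar equations are exactly \eqref{eq:1.5}. Since both the trial and test spaces have dimension $N$ and the basis functions are in bijection with the indices, it suffices to compute the three bracket terms in \eqref{eq:2.7} on these basis pairs.

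First I would treat the principal term $\{\psi_h',\mathrm V_1\varphi_h'\}$. Using the formal identities $\chi_j'=\delta_{s_j}-\delta_{s_{j+1}}$ and $\chi_{i+\varepsilon}'=\delta_{s_{i+\varepsilon}}-\delta_{s_{i+1+\varepsilon}}$ (legitimate here because, as noted before \eqref{eq:1.20}, $\mathrm V_1\chi_j'$ is continuous away from the grid points $s_j,s_{j+1}$, which are disjoint from $s_{i+\varepsilon},s_{i+1+\varepsilon}$ once $\varepsilon\notin\mathbb Z$), the bracket becomes $\{\delta_{s_{i+\varepsilon}}-\delta_{s_{i+1+\varepsilon}},\mathrm V_1(\delta_{s_j}-\delta_{s_{j+1}})\}$ as in \eqref{eq:1.20}. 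Evaluating $\mathrm V_1\delta_{s_j}$ via the kernel \eqref{eq:1.0a}--\eqref{eq:1.0c}, i.e. $(\mathrm V_1\delta_{s_j})(s)=V_1(s,s_j)$, one gets the four-term combination $V_1(s_{i+1+\varepsilon},s_{j+1})-V_1(s_{i+\varepsilon},s_{j+1})-V_1(s_{i+1+\varepsilon},s_j)+V_1(s_{i+\varepsilon},s_j)$, which is precisely the first line of \eqref{eq:1.6}. Summing against $\varphi_j$ reproduces the corresponding part of the left-hand side of \eqref{eq:1.5}.

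Next I would handle the zeroth-order term. Applying $Q_h^{-1}$ to $\varphi_h\in S_h$ gives $Q_h^{-1}\varphi_h=h\sum_j\varphi_h(t_j)\delta_{t_j}=h\sum_j\varphi_j\delta_{t_j}$ (using $\varphi_h(t_j)=\varphi_j$, since $t_j$ is the midpoint of the support interval of $\chi_j$), so $\mathrm V_2 Q_h^{-1}\varphi_h=h\sum_j\varphi_j V_2(\cdot,t_j)$; pairing with $Q_{h,\varepsilon}^{-1}\chi_{i+\varepsilon}=h\,\delta_{t_{i+\varepsilon}}$ and using $\chi_{i+\varepsilon}(t_{i+\varepsilon})=1$ yields $h^2\sum_j V_2(t_{i+\varepsilon},t_j)\varphi_j$, matching the $h^2 V_2$ term in \eqref{eq:1.6}. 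For the right-hand side, $\{Q_{h,\varepsilon}^{-1}\chi_{i+\varepsilon},g\}=\{h\,\delta_{t_{i+\varepsilon}},g\}=h\,g(t_{i+\varepsilon})$, exactly the right-hand side of \eqref{eq:1.5}. Combining the three computations over all test indices $i$, and observing that the map $\varphi_h\mapsto(\varphi_0,\ldots,\varphi_{N-1})$ is a linear isomorphism $S_h\to\mathbb C^N$, establishes the claimed equivalence.

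The only genuinely delicate point — and the one I would be careful to state precisely rather than dismiss — is the justification of the bracket $\{\psi_h',\mathrm V_1\varphi_h'\}$ in the non-duality sense $\{\delta_t,u\}=u(t)$: one must confirm that $\mathrm V_1\chi_j'$ is continuous in a neighborhood of each of the four points $s_{i+\varepsilon},s_{i+1+\varepsilon}$, which is where the hypothesis $\varepsilon\notin\mathbb Z$ enters (it forces the "source" nodes $s_j,s_{j+1}$ and the "observation" nodes $s_{i+\varepsilon},s_{i+1+\varepsilon}$ to be distinct, so the logarithmic kernel is evaluated off its diagonal), together with the fact that $\mathrm V_1$ maps Dirac deltas to functions that are smooth away from the location of the delta — a consequence of the kernel structure \eqref{eq:1.0a} and the smoothness of $A_1,K_1$. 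Everything else is bookkeeping: unwinding the definitions \eqref{eq:2.5}--\eqref{eq:2.6}, the basis expansions, and the four-corner difference pattern.
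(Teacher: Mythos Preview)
Your proof is correct and follows essentially the same approach as the paper: expand in the bases $\{\chi_j\}$ and $\{\chi_{i+\varepsilon}\}$, use $\chi_j'=\delta_{s_j}-\delta_{s_{j+1}}$ and the kernel identity $\{\delta_{s_{i+\varepsilon}},\mathrm V_1\delta_{s_j}\}=V_1(s_{i+\varepsilon},s_j)$ for the principal term, and unwind $Q_h^{-1}$, $Q_{h,\varepsilon}^{-1}$ for the remaining two terms. Your discussion of why $\varepsilon\notin\mathbb Z$ guarantees that $\mathrm V_1\chi_j'$ is continuous at the observation nodes is more explicit than the paper's, but the argument is the same.
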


\begin{proof}
Given that both $S_h$ and $S_{h,\varepsilon}$ are $N$-dimensional, the problem
\eqref{eq:2.7} can be reduced to a $N\times N$ linear system, after choosing a
basis for each of the spaces. The result follows then from several simple
observations. First of all  $\chi_j'=\delta_{s_j}-\delta_{s_{j+1}}$ and
$\chi_{i+\varepsilon}'=\delta_{s_{i+\varepsilon}}-\delta_{s_{i+1+\varepsilon}}$
and $\{ \delta_{s_{i+\varepsilon}},\mathrm
V_1\delta_{s_j}\}=V_1(s_{i+\varepsilon},s_j)$. Also
\[
 \{ Q_{h,\varepsilon}^{-1} \chi_{i+\varepsilon},\mathrm V_2 Q_h^{-1}\chi_j\}=h^2
 \{ \delta_{t_{i+\varepsilon}},\mathrm V_2
\delta_{t_j}\}=h^2V_2(t_{i+\varepsilon},t_j).
\]
Finally $\{Q_{h,\varepsilon}^{-1}\chi_{i+\varepsilon},g\}=h
g(t_{i+\varepsilon})$.
\end{proof}

The non-conforming Petrov-Galerkin discretization of \eqref{eq:1.7} given in
\eqref{eq:1.10}-\eqref{eq:1.11} is equivalent to the discrete variational
problem
\begin{equation}\label{eq:2.8}
\begin{array}{l} \mbox{find } \varphi_h \in S_h  \mbox{ such that}\\
\{ \psi_h',\mathrm V_1 \varphi_h'\}+
( \psi_h,\mathrm V_2 \varphi_h) = (\psi_h,g)\quad \forall\psi_h\in
S_{h,\varepsilon},
\end{array}
\end{equation}
where $(u,v)=\int_0^1 u(t) v(t)\mathrm dt.$ In the sequel $(\punto,\punto)$ will
be used to denote this bilinear form in $H^0$ (so that $(\overline
u,u)=\|u\|_0^2$) and its extension to a duality product $H^{-1}\times H^1$, so
that for any $r\in \mathbb R$.
\begin{equation}\label{eq:2.32}
\| v\|_{-r}=\sup_{0\neq u \in H^r}\frac{|(u,v)|}{\|u\|_r} \qquad v \in H^r.
\end{equation}
{\em We will always take adjoints with respect to this bilinear form, thus
avoiding conjugation.}

\section{Stability analysis via an inf-sup condition}\label{sec:3}

Consider now the bilinear form $w_h: S_{h,\varepsilon}\times S_h \to \mathbb C$ 
associated to the problem \eqref{eq:2.7}, namely
\begin{equation}\label{eq:3.1}
w_h(\psi_h,\varphi_h):=\{  \psi_h',\mathrm V_1 \varphi_h'\}+
\{Q_{h,\varepsilon}^{-1} \psi_h,\mathrm V_2Q_h^{-1}\varphi_h\}.
\end{equation}
The aim of this section is the proof of the following result, that in particular
implies that problem \eqref{eq:2.7} (and by Proposition \ref{prop:2.5} also the
method \eqref{eq:1.5}-\eqref{eq:1.6}) has a unique solution for small enough
$h$.

\begin{theorem}[Stability]\label{the:3.1}
There exist $h_0$ and $\beta_\varepsilon>0$ such that for $h \le h_0$,
\[
\sup_{0\neq \psi_h\in S_{h,\varepsilon}}\frac{| w_h(\psi_h,\varphi_h)|}{\|
\psi_h\|_0} \ge \beta_\varepsilon\|\varphi_h\|_0 \qquad \forall \varphi_h \in
S_h.
\]
\end{theorem}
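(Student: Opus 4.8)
The plan is to reduce the inf--sup condition to an equivalent statement about an auxiliary operator on the trial space and then exploit the pseudodifferential structure of $\mathrm W$ from Proposition~\ref{prop:2.3}. First I would identify the leading part of $w_h$: since $\chi_j' = \delta_{s_j} - \delta_{s_{j+1}}$, the term $\{\psi_h',\mathrm V_1\varphi_h'\}$ is exactly the bilinear form generated by testing $\mathrm V_1$ between the derivative spaces $S_h^{-1}$ and $S_{h,\varepsilon}^{-1}$; because $\mathrm V_1 \in \mathcal E(-1)$ has the representation $\frac{\imath}{2\pi}\boldsymbol a_1 \mathrm H \mathrm D_{-1} + (\text{lower order})$ with $a_1 \equiv \alpha \neq 0$ by Proposition~\ref{prop:2.2}, the dominant contribution behaves like the periodic single-layer-type operator $\alpha \mathrm H \mathrm D_{-1}$ tested against pairs of deltas on the two staggered grids. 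The second term $\{Q_{h,\varepsilon}^{-1}\psi_h,\mathrm V_2 Q_h^{-1}\varphi_h\}$ is a discretization of a smoothing operator ($\mathrm V_2 \in \mathcal E(-1)$ composed with interpolation-type projections) and should be treated as a compact perturbation in the $L^2$ norm on $S_h$, uniformly in $h$.

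The core of the argument is the analysis of the principal term via Fourier series. I would expand $\chi_j$ and $\chi_{i+\varepsilon}$ in the discrete Fourier basis $\{\phi_m\}$, use that $\widehat{\chi_j}(m)$ is a sinc-type factor times $\phi_m(t_j)$, and compute $\{\chi_{i+\varepsilon}',\mathrm V_1\chi_j'\}$ by inserting the symbol expansion of $\mathrm V_1$. After a discrete Fourier transform on the index $j$, the matrix $(W_{ij}^\varepsilon)$ becomes, up to lower-order terms, a multiplication on $\mathbb{C}^N$ by a symbol of the form $\sum_{p\in\mathbb Z} \sigma(m+pN)\,\mathrm{e}^{2\pi\imath p \varepsilon}$ (the aliasing sum), where $\sigma$ is the symbol of the hypersingular-type operator at the continuous level. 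One then needs a uniform lower bound on the modulus of this aliased symbol, ranging over $|m| \le N/2$, which is where the hypotheses $\varepsilon \notin \mathbb Z$ (otherwise the kernel hits the diagonal and the deltas are undefined) and $\varepsilon \notin \tfrac12 + \mathbb Z$ (otherwise the factor $\mathrm e^{2\pi\imath p\varepsilon} = (-1)^p$ causes the aliasing series for the relevant logarithmic piece to diverge or, more precisely, the associated symbol to vanish) both enter. This is essentially the Fourier-analytic computation behind \cite{saSc:1993, SaVa:2002}, adapted here with the extra derivative factors.

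Having a uniform lower bound for the principal symbol, I would assemble the inf--sup constant as follows. The principal term defines an operator $\mathrm A_h^\varepsilon : S_h \to S_{h,\varepsilon}'$ (identifying both discrete spaces with $\mathbb C^N$) that is, uniformly in $h$, bounded below in the $L^2$ norm; the remaining terms (the lower-order tail of $\mathrm V_1$ and the whole $\mathrm V_2$ contribution) are bounded operators that are asymptotically small relative to the principal one — either because they carry extra negative order, which translates into extra powers of $h$ on the discrete level, or because they are genuinely compact and hence can be absorbed once one knows injectivity. Combining a uniform discrete G\aa rding-type inequality for $\mathrm A_h^\varepsilon$ with the invertibility of the continuous operator $\mathrm W$ (Proposition after Proposition~\ref{prop:2.3}, which uses hypothesis~\eqref{eq:1.2}), a standard compactness/contradiction argument then yields the existence of $h_0$ and $\beta_\varepsilon > 0$ with the claimed bound. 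I expect the main obstacle to be the careful bookkeeping of the aliased symbol: extracting the precise asymptotic behavior of $\{\chi_{i+\varepsilon}',\mathrm V_1\chi_j'\}$ as $h\to 0$, keeping track of how the $\varepsilon$-staggering modulates each Fourier mode, and showing that the $\varepsilon$-dependent symbol stays bounded away from zero uniformly in $N$ and in $|m|\le N/2$ precisely when $\varepsilon \notin \tfrac12\mathbb Z$ — this is where the stability genuinely depends on $\varepsilon$, and it is the technical heart of the section.
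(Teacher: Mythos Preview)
Your outline is plausible in spirit but diverges substantially from the paper's argument, and one of the steps you describe is too vague to be considered complete.

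The paper does \emph{not} compute the aliased symbol of the hypersingular discretization directly. Instead it first passes to the intermediate bilinear form $w_h^\circ$ of \eqref{eq:3.20}, which keeps $\{\psi_h',\mathrm V_1\varphi_h'\}$ but replaces the fully discrete $\mathrm V_2$ term by the exact $(\psi_h,\mathrm V_2\varphi_h)$. It then introduces $\mathrm A:=\mathrm W\mathrm D_{-1}+\widehat{(\cdot)}(0)\,\mathrm W 1$ and shows (Lemma~\ref{lemma:3.9}) that $w_h^\circ(\psi_h,\varphi_h)=\{\varphi_h'+\widehat\varphi_h(0)d_h,\mathrm A^*\psi_h\}+\widehat\varphi_h(0)\big((1,\mathrm A^*\psi_h)-\{d_h,\mathrm A^*\psi_h\}\big)$. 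Since $\mathrm A=-\alpha\mathrm H+\mathrm K$ with $\mathrm K\in\mathcal E(-1)$, the required inf--sup for $\{\delta_h,\mathrm A^*\psi_h\}$ over $\delta_h\in S_h^{-1}$ is exactly the order-zero two-grid stability result already proved in \cite[Proposition~8]{CeDoSa:2002}; the injectivity hypothesis \eqref{eq:1.2} is used there, not through a separate compactness argument here. Lemma~\ref{lemma:3.8} converts the $H^{-1}$ bound on $\varphi_h'+\widehat\varphi_h(0)d_h$ back into $\|\varphi_h\|_0$, giving Proposition~\ref{prop:3.11}. In short, the paper antidifferentiates to reduce to the known order-zero case rather than redoing the aliased-symbol analysis with the extra derivative factors you propose.

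The second difference concerns the $\mathrm V_2$ quadrature term. You call $\{Q_{h,\varepsilon}^{-1}\psi_h,\mathrm V_2 Q_h^{-1}\varphi_h\}$ a ``compact perturbation in the $L^2$ norm, uniformly in $h$'', to be absorbed by a contradiction argument. The paper does something more concrete and, I think, necessary: it writes $w_h-w_h^\circ$ as a quadrature-error bilinear form with matrix $E^\varepsilon$ (Proposition~\ref{prop:3.2}), splits $E^\varepsilon$ into a tridiagonal block bounded by $Ch^2|\log h|$ and an off-diagonal block whose entries decay like $h^2/|i-j|^2$ (Lemmas~\ref{lemma:3.3}--\ref{lemma:3.4}), and concludes $|E^\varepsilon|_2\le C_\varepsilon h^{3/2}$ (Lemma~\ref{lemma:3.5}), hence $|w_h-w_h^\circ|\le C_\varepsilon h^{1/2}\|\psi_h\|_0\|\varphi_h\|_0$. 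This is a genuinely small perturbation, not a compact one, and it is what lets the inf--sup of $w_h^\circ$ transfer to $w_h$. Your ``compactness/contradiction'' sketch does not obviously cover this: $Q_h^{-1}\varphi_h$ is a sum of Dirac deltas, so $\mathrm V_2 Q_h^{-1}\varphi_h$ has logarithmic singularities at the grid points, and there is no off-the-shelf collective-compactness statement that handles $\{Q_{h,\varepsilon}^{-1}\psi_h,\mathrm V_2 Q_h^{-1}\varphi_h\}$ uniformly without an estimate of the type the paper proves. If you want to pursue your route, that quadrature-error bound (or an equivalent) is the missing ingredient you must supply explicitly.
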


\subsection{Stability of the non-conforming PG method}

We start by considering the bilinear form associated to problem \eqref{eq:2.8}
\begin{equation}\label{eq:3.20}
w_h^\circ(\psi_h,\varphi_h):=\{  \psi_h',\mathrm V_1 \varphi_h'\}+
( \psi_h,\mathrm V_2\varphi_h),
\end{equation}
the operator $\mathrm A \varphi:=\mathrm W \mathrm
D_{-1}\varphi+\widehat\varphi(0)\mathrm W 1$ and its adjoint $\mathrm A^*$. Note
that if $\psi_h\in S_{h,\varepsilon}$, then $\mathrm A^* \psi_h$ is a smooth
function except at the discontinuity points of $\psi_h$.

\begin{lemma}\label{lemma:3.9}
\[
w_h^\circ(\psi_h,\varphi_h)=\{\varphi_h',\mathrm
A^*\psi_h\}+\widehat\varphi_h(0)(1,\mathrm A^*\psi_h) \qquad \forall \psi_h\in
S_{h,\varepsilon}, \, \varphi_h \in S_h.
\]
\end{lemma}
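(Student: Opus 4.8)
The statement says that the bilinear form $w_h^\circ$, which pairs $\psi_h'$ against $\mathrm V_1\varphi_h'$ and adds the $L^2$ pairing $(\psi_h,\mathrm V_2\varphi_h)$, can be rewritten entirely as an action against $\varphi_h'$ (plus a correction for the mean value of $\varphi_h$). The natural route is to move the operators $\mathrm V_1,\mathrm V_2$ off $\varphi_h$ and onto $\psi_h$ via adjoints, and then to re-express everything in terms of $\mathrm W$ using $\mathrm W=-\mathrm D\mathrm V_1\mathrm D+\mathrm V_2$. The operator $\mathrm A\varphi=\mathrm W\mathrm D_{-1}\varphi+\widehat\varphi(0)\mathrm W1$ is tailored precisely so that $\mathrm A$ inverts $\mathrm D$ on the mean-zero part and handles the constant separately: note $\mathrm D\mathrm D_{-1}\varphi=\varphi-\widehat\varphi(0)$ by Lemma \ref{lemma:2.1}(b), so $\mathrm W\varphi=\mathrm A\mathrm D\varphi$ in a suitable sense, and correspondingly $w_h^\circ$ should collapse to $\{\varphi_h',\mathrm A^*\psi_h\}$ up to the constant term.

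\medskip

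The plan is as follows. First I would split $\varphi_h=\varphi_h^0+\widehat\varphi_h(0)$ where $\varphi_h^0$ has zero mean, and handle the two pieces. For the mean-zero piece, $\varphi_h^0=\mathrm D_{-1}\varphi_h'$ (since $\varphi_h^0=\mathrm D_{-1}\mathrm D\varphi_h^0$ and $\mathrm D\varphi_h^0=\mathrm D\varphi_h=\varphi_h'$), so I can write $-\{\psi_h',\mathrm V_1\varphi_h'\}$ and $(\psi_h,\mathrm V_2\varphi_h^0)$ and recognize the combination as $(\psi_h,(-\mathrm D\mathrm V_1\mathrm D+\mathrm V_2)\mathrm D_{-1}\varphi_h')=(\psi_h,\mathrm W\mathrm D_{-1}\varphi_h')$, modulo the integration-by-parts step that turns $(\psi_h,\mathrm D(\mathrm V_1\varphi_h'))$ into $\{\psi_h',\mathrm V_1\varphi_h'\}$ with the correct sign. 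Taking the adjoint of $\mathrm W\mathrm D_{-1}$ against $\psi_h$ then converts $(\psi_h,\mathrm W\mathrm D_{-1}\varphi_h')$ into $\{\varphi_h',(\mathrm W\mathrm D_{-1})^*\psi_h\}$. For the constant piece, $(\psi_h,\mathrm V_2\,\widehat\varphi_h(0))=\widehat\varphi_h(0)(\psi_h,\mathrm V_2 1)$ and, since $\varphi_h'$ does not see the constant, the only contribution is $\widehat\varphi_h(0)(1,\mathrm V_2^*\psi_h)$; combined with the $\widehat\varphi_h(0)\mathrm W1$ term in $\mathrm A$ this should reconstruct exactly $\widehat\varphi_h(0)(1,\mathrm A^*\psi_h)$. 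Assembling the pieces gives the claimed identity with $\mathrm A^*=(\mathrm W\mathrm D_{-1})^*+(\text{the piece acting as }(1,\punto)\text{ against }\mathrm W1)$, consistent with the definition of $\mathrm A$.

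\medskip

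The main obstacle, and the place where care is needed, is the legitimacy of the integration by parts and of the adjoint manipulations in this \emph{non-conforming} setting: $\psi_h\in S_{h,\varepsilon}$ is only piecewise constant, so $\psi_h'$ is a combination of Dirac deltas, and the pairings $\{\psi_h',\mathrm V_1\varphi_h'\}$, $\{\varphi_h',\mathrm A^*\psi_h\}$ are the extended brackets $\{\punto,\punto\}$ of Section \ref{sec:2.3}, not genuine $H^{-1}\times H^1$ dualities. I must check that $\mathrm A^*\psi_h=\mathrm W^*\mathrm D_{-1}^*\psi_h+(\ldots)$ is smooth away from the jump points of $\psi_h$ (this is exactly the remark preceding the lemma, which follows because $\mathrm V_1^*,\mathrm V_2^*$ are again logarithmic operators of the same type and $\mathrm D_{-1}^*$ smooths), so that $\{\varphi_h',\mathrm A^*\psi_h\}$ makes sense — one needs the jump points of $\psi_h$, which lie in $h(\varepsilon-\tfrac12+\mathbb Z)$, to be disjoint from the jump points of $\varphi_h$ in $h(-\tfrac12+\mathbb Z)$, which holds precisely because $\varepsilon\notin\mathbb Z$. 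Granting the continuity/support bookkeeping, the algebraic identity $\mathrm W\mathrm D_{-1}\varphi'+\widehat\varphi(0)\mathrm W1$ applied with $\varphi=\varphi_h$ together with $\mathrm W=-\mathrm D\mathrm V_1\mathrm D+\mathrm V_2$ is a short computation, and the rest is adjoints with respect to the (non-conjugated) bilinear form $(\punto,\punto)$ as stipulated in the text.
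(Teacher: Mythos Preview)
Your plan is correct and hits the same identity as the paper, but the paper organizes the computation from the opposite end, and that reorganization is worth noting because it sidesteps exactly the obstacle you flag. Instead of decomposing $\varphi_h=\varphi_h^0+\widehat\varphi_h(0)$ and passing through the intermediate object $(\psi_h,\mathrm W\mathrm D_{-1}\varphi_h')$ (which, as you say, is not a genuine duality pairing and requires justification), the paper first computes $\mathrm A^*\psi$ explicitly as a function: using $\mathrm D_{-1}^*=-\mathrm D_{-1}$ and Lemma~\ref{lemma:2.1}(b) one gets
\[
\mathrm A^*\psi=-\mathrm D_{-1}\mathrm W^*\psi+\widehat{\mathrm W^*\psi}(0)=\mathrm V_1^*\psi'-\widehat{\mathrm V_1^*\psi'}(0)-\mathrm D_{-1}\mathrm V_2^*\psi+\widehat{\mathrm V_2^*\psi}(0).
\]
With this formula in hand, $\{\varphi_h',\mathrm A^*\psi_h\}$ is manifestly well-defined (each summand is continuous at the points $s_j$ since $\varepsilon\notin\mathbb Z$), and one reads off directly
\[
\{\varphi_h',\mathrm A^*\psi_h\}=\{\varphi_h',\mathrm V_1^*\psi_h'\}+(\mathrm D_{-1}\varphi_h',\mathrm V_2^*\psi_h)=w_h^\circ(\psi_h,\varphi_h)-\widehat\varphi_h(0)\,\widehat{\mathrm V_2^*\psi_h}(0),
\]
using $\{\varphi_h',1\}=0$ to kill the constants and $\mathrm D_{-1}\varphi_h'=\varphi_h-\widehat\varphi_h(0)$. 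Integrating the displayed formula gives $(1,\mathrm A^*\psi_h)=\widehat{\mathrm V_2^*\psi_h}(0)$, which closes the identity. Your route is the same algebra run backwards; its advantage is that it motivates the definition of $\mathrm A$, while the paper's advantage is that no ill-defined intermediate pairing ever appears.
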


\begin{proof}
A direct computation, using the fact that $\mathrm D_{-1}^*=-\mathrm D_{-1}$ and
Lemma \ref{lemma:2.1}(b) shows that
\begin{equation}\label{eq:3.21}
\mathrm A^* \psi=-\mathrm D_{-1}\mathrm W^*\psi+\widehat{\mathrm
W^*\psi}(0)=\mathrm V_1^*\psi'-\widehat{\mathrm V_1^*\psi'}(0)-\mathrm
D_{-1}\mathrm V_2^*\psi+\widehat{\mathrm V_2^*\psi}(0).
\end{equation}
Noticing that $\{\varphi_h',1\}=(\varphi_h',1)=0$, it follows from
\eqref{eq:3.21} and Lemma \ref{lemma:2.1}(b) that
\begin{eqnarray*}
\{\varphi_h',\mathrm A^*\psi_h\} \!\!&=& \!\!\{\varphi_h',\mathrm
V_1^*\psi_h'\}+(\mathrm D_{-1}\varphi_h',\mathrm V_2^*\psi_h)\\
\!\!&=& \!\! \{\varphi_h',\mathrm V_1^*\psi_h'\}+(\varphi_h,\mathrm
V_2^*\psi_h)-\widehat\varphi_h(0)(1,\mathrm V_2^*\psi_h)=
w_h^\circ(\psi_h,\varphi_h)-\widehat\varphi_h(0)\widehat{\mathrm
V_2^*\psi_h}(0).
\end{eqnarray*}
At the same time, integrating in \eqref{eq:3.21}, it follows that $(1,\mathrm
A^*\psi)=\widehat{\mathrm V_2^*\psi}(0)$, which finishes the proof.
\end{proof}

\begin{lemma}\label{lemma:3.7}
$\|d_h-1\|_{-1}\le \pi h$.
\end{lemma}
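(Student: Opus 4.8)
The plan is to compute the Fourier coefficients of $d_h-1$ explicitly and then bound the $H^{-1}$ norm directly from the definition \eqref{eq:2.2}. Recall $d_h = h\sum_{j=0}^{N-1}\delta_{s_j}$ with $s_j=(j-\tfrac12)h$ and $h=1/N$. Since $\widehat{\delta_t}(m)=\langle\delta_t,\phi_m\rangle=\phi_m(t)=e^{2\pi\imath m t}$ (note: in the paper's sign convention this is $\exp(2\pi\imath m t)$, or its conjugate — whichever is consistent with \eqref{eq:2.1}, the modulus is the same), we get
\[
\widehat{d_h}(m)=h\sum_{j=0}^{N-1} e^{2\pi\imath m (j-\frac12)h}
= h\, e^{-\pi\imath m h}\sum_{j=0}^{N-1} e^{2\pi\imath m j h}.
\]
The geometric sum $\sum_{j=0}^{N-1}e^{2\pi\imath m j h}$ equals $N$ when $N\mid m$ and $0$ otherwise (since $e^{2\pi\imath m N h}=e^{2\pi\imath m}=1$ but $e^{2\pi\imath m h}\neq 1$ unless $N\mid m$). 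Hence $\widehat{d_h}(m)=e^{-\pi\imath m h}=(-1)^{m/N}$ when $N\mid m$ and $\widehat{d_h}(m)=0$ otherwise. In particular $\widehat{d_h}(0)=1=\widehat 1(0)$, so the $m=0$ term of $\|d_h-1\|_{-1}^2$ vanishes, and $\widehat 1(m)=0$ for $m\neq 0$, so
\[
\|d_h-1\|_{-1}^2=\sum_{m\neq 0}|m|^{-2}|\widehat{d_h}(m)|^2
=\sum_{\substack{m\neq 0\\ N\mid m}} |m|^{-2}
=\sum_{\ell\neq 0}\frac{1}{(\ell N)^2}
=\frac{1}{N^2}\sum_{\ell\neq 0}\frac{1}{\ell^2}
=\frac{2}{N^2}\cdot\frac{\pi^2}{6}=\frac{\pi^2 h^2}{3}.
\]

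This already gives $\|d_h-1\|_{-1}=\pi h/\sqrt 3\le \pi h$, which is the claimed bound (and in fact slightly better). I would present it in exactly this order: state the Fourier coefficients of $\delta_t$, evaluate the geometric sum to get $\widehat{d_h}(m)$, observe the cancellation of the zero-th mode against $\widehat 1$, and then sum the resulting series using $\sum_{\ell\ge 1}\ell^{-2}=\pi^2/6$.

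The only mild subtlety — hardly an obstacle — is bookkeeping with the phase factor $e^{-\pi\imath m h}$: one must check it does not spoil the modulus (it has modulus $1$) and, when $N\mid m$, that it contributes only a harmless sign $(-1)^{m/N}$, so $|\widehat{d_h}(m)|=1$ on the support. One should also double-check the Fourier-coefficient sign convention in \eqref{eq:2.1} so that the geometric-sum indices match, but since the estimate only involves $|\widehat{d_h}(m)|$ this is immaterial to the final bound. Everything else is a routine computation of a convergent series.
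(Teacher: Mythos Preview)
Your proof is correct and in fact yields the exact value $\|d_h-1\|_{-1}=\pi h/\sqrt{3}$, which is sharper than the paper's bound. However, the route is genuinely different from the paper's. The paper does not touch Fourier coefficients at all: it uses the duality characterization \eqref{eq:2.32} and bounds $(1-d_h,u)=\int_0^1 u - h\sum_j u(s_j)$ as a sum of midpoint-quadrature errors on the subintervals $(s_j-h/2,s_j+h/2)$, obtaining $|(1-d_h,u)|\le (h/2)\|u'\|_0\le \pi h\|u\|_1$. Your approach exploits the periodicity and uniformity of the grid to collapse the Fourier series to a single geometric sum, which is arguably cleaner here and gives the optimal constant; the paper's quadrature argument is slightly looser but is more robust in that it would transfer immediately to non-uniform grids or other quadrature nodes, and it fits the spirit of the surrounding estimates (e.g.\ Lemma~\ref{lemma:4.1}), which are all based on local Taylor/quadrature bounds rather than explicit spectral computations.
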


\begin{proof}
Since
for all $u\in H^1$,
\begin{eqnarray*}
|(1-d_h,u)_0| &=& \Big|\int_0^1 u(t)\mathrm dt-h\sum_{j=0}^{N-1} u(s_j)\Big|\le
\sum_{j=0}^{N-1} \Big| \int_{s_j-\frac{h}2}^{s_j+\frac{h}2} u(t)\mathrm dt-
hu(s_j)\Big|\\
&\le & \frac{h}2\sum_{j=0}^{N-1} \int_{s_j-\frac{h}2}^{s_j+\frac{h}2}
|u'(t)|\mathrm dt\le \frac{h}2 \| u'\|_0\le \pi h \| u\|_1,
\end{eqnarray*} 
the result follows from \eqref{eq:2.32}.
\end{proof}

\begin{lemma}\label{lemma:3.10}
For $\varepsilon \not\in \smallfrac12\mathbb Z$, there exist positive constants
$\beta_\varepsilon$ and $C_\varepsilon$ such that
\begin{equation}\label{eq:3.22}
\sup_{0\neq \psi_h \in S_{h,\varepsilon}} \frac{|\{ \delta_h,\mathrm
A^*\psi_h\}|}{\|\psi_h\|_0}\ge \beta_\varepsilon \|\delta_h\|_{-1}\qquad \forall
\delta_h \in S_h^{-1},
\end{equation}
and
\begin{equation}\label{eq:3.23}
|(1,\mathrm A^*\psi_h)-\{d_h,\mathrm A^*\psi_h\}|\le C_\varepsilon
h\|\psi_h\|_0\qquad \forall \psi_h\in S_{h,\varepsilon}.
\end{equation}
\end{lemma}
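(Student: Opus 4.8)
The strategy is to analyze the operator $\mathrm A^* = \mathrm V_1^*\mathrm D_1 - \mathrm D_{-1}\mathrm V_2^*$ (modulo the finite-rank corrections $\widehat{\mathrm V_1^*\psi'}(0)$ and $\widehat{\mathrm V_2^*\psi}(0)$ appearing in \eqref{eq:3.21}) as a perturbation of its principal part. By Proposition \ref{prop:2.2}, $\mathrm V_1^* = \boldsymbol a_1^*\mathrm H\mathrm D_{-1} + \mathrm K$ with $a_1^*(s)=A_1(s,s)\equiv\alpha$ (note $A_1$ is real-symmetric-like in the relevant diagonal sense, so the leading coefficient is the constant $\alpha$), and $\mathrm K\in\mathcal E(-2)$; since $\mathrm D_{-1}\mathrm D_1 = \mathrm I - \widehat{(\cdot)}(0)$, the dominant term of $\mathrm A^*$ is $\alpha\mathrm H$ plus operators mapping into $H^1$ or better, plus finite-rank pieces. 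So I would write $\mathrm A^*\psi = \alpha\mathrm H\psi + \mathrm R\psi$ with $\mathrm R\colon H^0\to H^1$ bounded (after absorbing the rank-one terms, which are bounded into $\mathcal D$). The inf-sup quantity $\{\delta_h,\mathrm A^*\psi_h\}$ then splits as $\alpha\{\delta_h,\mathrm H\psi_h\} + \{\delta_h,\mathrm R\psi_h\}$.

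For the \emph{first, principal term}, I would invoke the known stability theory for the two-grid logarithmic discretization. The pairing $\{\delta_h,\mathrm H\psi_h\}$ with $\delta_h\in S_h^{-1}$ and $\psi_h\in S_{h,\varepsilon}$ is, after Fourier analysis on the two staggered grids, governed by a symbol of the form $\sum_{m} \mathrm{sign}(m)(\text{Fourier data of }\delta_h)\,\overline{(\text{Fourier data of }\psi_h)}\,e^{2\pi\imath m\varepsilon h/h}$; the standard computation (as in \cite{saSc:1993,CeDoSa:2002}, cf. also the Fourier references \cite{Arnold:1983,SaVa:2002}) shows that after diagonalizing over the $N$ frequency classes one obtains $2\times2$ (or small) blocks whose determinant is bounded below uniformly in $h$ precisely when $\varepsilon\not\in\frac12\mathbb Z$ — the value $\varepsilon=\frac12$ being exactly where a block degenerates. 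This yields a discrete inf-sup constant $\beta_\varepsilon>0$ for $\alpha\{\delta_h,\mathrm H\psi_h\}/\|\psi_h\|_0 \gtrsim \|\delta_h\|_{-1}$. The \emph{second term} $\{\delta_h,\mathrm R\psi_h\}$ is a compact-type perturbation: since $\mathrm R\psi_h\in H^1$ and $\|\mathrm R\psi_h\|_1\lesssim\|\psi_h\|_0$, and since $\delta_h\in S_h^{-1}\subset H^{-1}$ with $\{\delta_h,v\}=(\delta_h,v)$ when $v$ is continuous, one has $|\{\delta_h,\mathrm R\psi_h\}|\le\|\delta_h\|_{-1}\|\mathrm R\psi_h\|_1\lesssim\|\delta_h\|_{-1}\|\psi_h\|_0$ — but this is of the same size, not smaller, so a naive perturbation does not close. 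The fix is to note that what is actually needed is an inf-sup bound, so I would instead use the projection/superapproximation structure: on $S_{h,\varepsilon}$ the operator $\psi_h\mapsto\mathrm R\psi_h$ followed by testing against $S_h^{-1}$ is, up to $\mathcal O(h)$ errors, approximated by a finite-dimensional operator with a genuinely smoothing kernel, so the perturbation of the $2\times2$ symbol blocks is $\mathcal O(h)$ in norm and can be absorbed for $h\le h_0$. Concretely, I would show $|\{\delta_h,\mathrm R\psi_h\}|\le Ch\|\delta_h\|_{-1}\|\psi_h\|_0 + (\text{lower order})$ by using that $\mathrm R$ maps into $H^1$ and that the discrete delta-combinations in $S_h^{-1}$ satisfy $\|d_h-1\|_{-1}\le\pi h$ (Lemma \ref{lemma:3.7}) and analogous $\mathcal O(h)$ approximation bounds for the oscillatory part $\delta_h - (\text{its }H^{-1}\text{-Riesz representative in }S_h)$.

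For estimate \eqref{eq:3.23}, I would compute directly: $(1,\mathrm A^*\psi_h) - \{d_h,\mathrm A^*\psi_h\} = (1-d_h,\mathrm A^*\psi_h)$ interpreted appropriately (both pairings make sense since $\mathrm A^*\psi_h$ is continuous away from the $N$ jump points of $\psi_h$, and $d_h$ avoids those points when $\varepsilon\not\in\frac12\mathbb Z$). Then since $\mathrm A^*\psi_h = \alpha\mathrm H\psi_h + \mathrm R\psi_h$ with $\mathrm R\psi_h\in H^1$, the smooth part contributes $|(1-d_h,\mathrm R\psi_h)|\le\|1-d_h\|_{-1}\|\mathrm R\psi_h\|_1\le\pi h\cdot C\|\psi_h\|_0$ by Lemma \ref{lemma:3.7}. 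For the $\mathrm H\psi_h$ part I cannot use $H^1$ regularity, but here I would exploit the explicit piecewise-constant structure of $\psi_h$: $\mathrm H\psi_h$ is a periodic function with logarithmic singularities at the grid points $s_{j+\varepsilon}$, and the midpoint-rule-type error $(1-d_h,\mathrm H\psi_h) = \int_0^1\mathrm H\psi_h - h\sum\mathrm H\psi_h(s_j)$ can be estimated by a direct interval-by-interval analysis (as in the proof of Lemma \ref{lemma:3.7}, but accounting for the log singularities sitting at distance $\ge c\,h$ from the nodes $s_j$ because $\varepsilon\not\in\frac12\mathbb Z$), yielding an $\mathcal O(h|\log h|)$ or, with the known sharper asymptotic expansions from \cite{CeDoSa:2002,DoSa:2001}, an $\mathcal O(h)$ bound times $\|\psi_h\|_0$.

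\textbf{Main obstacle.} The delicate point is the first, principal inf-sup bound \eqref{eq:3.22}: making the Fourier diagonalization on the two staggered grids rigorous, identifying the $2\times2$ symbol blocks, and showing their determinants are bounded away from zero \emph{uniformly in $h$} exactly under the hypothesis $\varepsilon\not\in\frac12\mathbb Z$ — and then controlling the lower-order perturbation $\mathrm R$ so it does not spoil this for small $h$. The explicit appearance of $\varepsilon=\pm1/6$ (superconvergence) versus $\varepsilon=\pm1/2$ (instability) lives entirely in these symbol computations, so this is where the paper's specific contribution and the bulk of the technical work reside; estimate \eqref{eq:3.23} is comparatively routine given Lemma \ref{lemma:3.7} and the known quadrature expansions.
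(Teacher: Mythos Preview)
Your decomposition $\mathrm A^*=-\alpha\mathrm H+\mathrm K^*$ with $\mathrm K^*\in\mathcal E(-1)$ and the subsequent split of both \eqref{eq:3.22} and \eqref{eq:3.23} into a Hilbert-transform part and a smoothing part is exactly what the paper does. Your treatment of \eqref{eq:3.23} matches the paper's: the term $(1-d_h,\mathrm K^*\psi_h)$ is bounded via Lemma~\ref{lemma:3.7} and $\|\mathrm K^*\psi_h\|_1\le C\|\psi_h\|_0$, while for the Hilbert-transform contribution $(1,\mathrm H\psi_h)-h\sum_j(\mathrm H\psi_h)(s_j)$ the paper simply invokes \cite[Lemma~3]{CeDoSa:2002}, which gives the $\mathcal O(h)\|\psi_h\|_0$ bound directly---your sketch of the direct interval-by-interval estimate is the content of that lemma.

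For \eqref{eq:3.22}, however, there is a gap. The paper does not re-do any Fourier block analysis; it observes that $\mathrm A=-\alpha\mathrm H+\mathrm K$ with $\mathrm K\in\mathcal E(-1)$ and then invokes \cite[Proposition~8]{CeDoSa:2002}, which delivers the inf-sup bound for the \emph{full} operator $\mathrm A$, compact perturbation included. Your plan instead establishes the inf-sup for $-\alpha\mathrm H$ alone and then tries to absorb $\mathrm K^*$ as a small perturbation; you correctly note that the naive bound $|\{\delta_h,\mathrm K^*\psi_h\}|\le C\|\delta_h\|_{-1}\|\psi_h\|_0$ is merely bounded, not small, but your proposed fix---that ``the perturbation of the $2\times2$ symbol blocks is $\mathcal O(h)$'' via superapproximation---does not hold for a generic $\mathrm K\in\mathcal E(-1)$. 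A rank-one smoothing operator already shows this: its discrete matrix has norm of order one, not $h$. The mechanism that actually closes the argument (and is what \cite[Proposition~8]{CeDoSa:2002} uses) is a compactness/contradiction argument exploiting the invertibility of $\mathrm A$ (equivalently of $\mathrm W$, guaranteed by hypothesis~\eqref{eq:1.2}): if the discrete inf-sup constants tended to zero along $h_n\to0$, one would extract a weak limit in $H^{-1}$, pass to the limit using the inf-sup for $-\alpha\mathrm H$ and the compactness of $\mathrm K$, and contradict injectivity. So the ``main obstacle'' you identify is not where the work lies in this paper---the symbol computation is cited---and the step you propose for the compact remainder would fail as stated.
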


\begin{proof} Using Proposition \ref{prop:2.3} (asymptotic expansion of $\mathrm
W$), it is simple to see that $\mathrm A=-\alpha\mathrm H+\mathrm K$, where
$\mathrm K\in \mathcal E(-1)$. This places us in the hypotheses of
\cite[Proposition 8]{CeDoSa:2002}, which proves \eqref{eq:3.22}. For the second
estimate, we write $\mathrm A^*=- \alpha\mathrm H + \mathrm K^*$ and decompose
\[
(1,\mathrm A^*\psi_h)-\{d_h,\mathrm A^*\psi_h\}=- \alpha\Big((1,\mathrm
H\psi_h)-h\sum_{j=0}^{N-1} (\mathrm H\psi_h)(s_j)\Big)+(1-d_h,\mathrm K\psi_h).
\]
The first term can be bounded using \cite[Lemma 3]{CeDoSa:2002}
\[
\Big| (1,\mathrm H\psi_h)-h\sum_{j=0}^{N-1} (\mathrm H\psi_h)(s_j)\Big|\le
C_\varepsilon h \|\psi_h\|_0 \qquad \forall  \psi_h\in S_{h,\varepsilon}.
\]
The second one follows from Lemma \ref{lemma:3.7},
\[
|(1-d_h,\mathrm K\psi_h)|\le \| 1-d_h\|_{-1}\|\mathrm K^*\psi_h\|_1\le \pi h
\|\mathrm K^*\|_{H^0 \to H^1} \|\psi_h\|_0.
\]
\end{proof}

\begin{lemma}\label{lemma:3.8} There exist two positive constants $c_1, c_2$
such that
\[
c_1\| \varphi_h'+\widehat \varphi_h(0)\,d_h\|_{-1}\le \|\varphi_h\|_0\le c_2 \|
\varphi_h'+\widehat \varphi_h(0)\,d_h\|_{-1}\qquad \forall \varphi_h \in S_h.
\]
\end{lemma}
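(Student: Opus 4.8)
The plan is to work on the Fourier side, exploiting that $\varphi_h\in S_h$ is piecewise constant on the grid $\{s_i\}$ and that $\varphi_h' + \widehat\varphi_h(0)\,d_h$ is then an element of $S_h^{-1}$, the span of the Dirac deltas $\delta_{s_j}$. First I would write $\varphi_h = \sum_{j=0}^{N-1}\varphi_j\chi_j$, so that $\varphi_h' = \sum_j \varphi_j(\delta_{s_j}-\delta_{s_{j+1}}) = \sum_j(\varphi_j-\varphi_{j-1})\delta_{s_j}$ and $\widehat\varphi_h(0)\,d_h = h\,\widehat\varphi_h(0)\sum_j\delta_{s_j}$, where $\widehat\varphi_h(0) = h\sum_j\varphi_j$. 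Hence $\varphi_h' + \widehat\varphi_h(0)\,d_h = \sum_j \mu_j \delta_{s_j}$ with $\mu_j := (\varphi_j-\varphi_{j-1}) + h^2\sum_\ell\varphi_\ell$. The point of the correction term $\widehat\varphi_h(0)\,d_h$ is precisely to make the map $\varphi_h \mapsto \varphi_h' + \widehat\varphi_h(0)\,d_h$ a bijection $S_h \to S_h^{-1}$: without it one loses exactly the constant functions, since $\varphi_h \mapsto \varphi_h'$ kills constants and its range is the codimension-one subspace of $S_h^{-1}$ of deltas with zero mean coefficient-sum.

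Next I would compute the relevant norms via the discrete Fourier transform. Writing $\varphi_j = \sum_{|m|\le N/2}' c_m\,\mathrm e^{2\pi\imath m j/N}$ (with the usual convention at the endpoints), one has $\|\varphi_h\|_0^2 \asymp h\sum_j|\varphi_j|^2 \asymp \sum_m|c_m|^2$, with constants independent of $h$ and $N$, because the $\chi_j$ form (up to the factor $h^{1/2}$) an orthonormal-like system in $H^0$. On the other side, since $\delta_{s_j} = \sum_{m\in\mathbb Z}\mathrm e^{-2\pi\imath m s_j}\phi_m$ as a periodic distribution, the distribution $\sum_j\mu_j\delta_{s_j}$ has Fourier coefficients that are $N$-periodic aliases of the discrete transform of $(\mu_j)$; a standard aliasing computation (as in the references \cite{saSc:1993,SaVa:2002} on two-grid methods) gives
\[
\big\|\varphi_h' + \widehat\varphi_h(0)\,d_h\big\|_{-1}^2 \asymp |\widehat\varphi_h(0)|^2 + \sum_{m\neq 0}\Big(\min\{|m|,1\}\Big)^{-2}\,|\text{(transform of }\mu\text{)}(m)|^2,
\]
and the coefficient multiplier for $\mu_j = (\varphi_j-\varphi_{j-1}) + h^2\sum_\ell\varphi_\ell$ corresponding to the frequency-$m$ mode $c_m$ is, up to the harmless aliasing tails, of the form $h^{-1}(1-\mathrm e^{-2\pi\imath m/N})$ on the nonconstant modes and $h^{-1}\cdot N h^2 = h\cdot N = O(1)$ on the constant mode. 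The key elementary inequality is that $|1-\mathrm e^{-\imath\theta}| \asymp |\theta|$ for $|\theta|\le\pi$, so the multiplier on mode $m$ (with $|m|\le N/2$) satisfies $h^{-1}|1-\mathrm e^{-2\pi\imath m/N}| \asymp h^{-1}\cdot(|m|h) = |m| \asymp \max\{|m|,1\}$; multiplying by the $\|\cdot\|_{-1}$ weight $\max\{|m|,1\}^{-1}$ gives a multiplier bounded above and below by absolute constants on every mode, including $m=0$. This yields $\|\varphi_h' + \widehat\varphi_h(0)\,d_h\|_{-1}^2 \asymp \sum_m|c_m|^2 \asymp \|\varphi_h\|_0^2$, which is the claim.

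The main obstacle is the careful treatment of the aliasing sums: $\delta_{s_j}$ is not in $H^0$, only in $H^r$ for $r<-1/2$, and its $H^{-1}$ norm picks up contributions from all integer frequencies $m + \mathbb Z N$, not just $|m|\le N/2$. One must show that these high-frequency tails are controlled by the principal ($|m|\le N/2$) contribution — i.e. that $\sum_{\ell\neq 0}(m+\ell N)^{-2} \le C\,N^{-2}$ uniformly, which is routine — and, for the lower bound, that they do not cause cancellation; here the factor $1-\mathrm e^{-2\pi\imath m/N}$ appears identically in every alias, so it factors out and the remaining geometric-type sum $\sum_\ell |m+\ell N|^{-2}$ is positive and comparable to $\max\{|m|,1\}^{-2}$. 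This is exactly the kind of estimate carried out for the single-layer two-grid method in \cite{CeDoSa:2002,saSc:1993}, so I would either cite the relevant lemma there or reproduce the short aliasing argument. Everything else is bookkeeping with discrete Fourier transforms and the equivalence $h\sum_j|\varphi_j|^2 \asymp \|\varphi_h\|_0^2$.
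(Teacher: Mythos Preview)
Your Fourier/aliasing approach is correct in substance (the $\min$ in your displayed $H^{-1}$ weight should be $\max$, but you use the right thing two lines later), yet it is a genuinely different route from the paper's. The paper argues far more directly at the operator level, without touching discrete Fourier transforms or aliasing. For the first inequality it simply combines the three elementary bounds
\[
\|\varphi_h'\|_{-1}\le 2\pi\|\varphi_h\|_0,\qquad |\widehat\varphi_h(0)|\le\|\varphi_h\|_0,\qquad \|d_h\|_{-1}\le 1+\pi h,
\]
the last coming from Lemma~\ref{lemma:3.7}. For the second inequality it writes down the explicit inverse of the map $\varphi_h\mapsto\varphi_h'+\widehat\varphi_h(0)\,d_h$, namely $\delta_h\mapsto \mathrm D_{-1}\delta_h+\widehat\delta_h(0)(1-\mathrm D_{-1}d_h)$, and bounds each piece:
\[
\|\mathrm D_{-1}\delta_h\|_0\le\tfrac1{2\pi}\|\delta_h\|_{-1},\qquad |\widehat\delta_h(0)|\le\|\delta_h\|_{-1},\qquad \|1-\mathrm D_{-1}d_h\|_0\le 1+\tfrac{h}2.
\]
What the paper's argument buys is brevity and the avoidance of any aliasing bookkeeping; it leans entirely on the fact that $\mathrm D_{-1}$ is an isometry $H^{-1}\to H^0$ on mean-zero distributions and on the already-proved closeness of $d_h$ to $1$. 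What your approach buys is a mode-by-mode picture of \emph{why} the equivalence holds (the combined multiplier is $\asymp 1$ on every discrete frequency, constant mode included), and it would generalize more readily if one wanted the analogous statement in $H^s$ for other $s$. Either is acceptable, but for this lemma the operator-level proof is considerably shorter.
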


\begin{proof} The first bound is a simple consequence of the following
inequalities
\[
\| \varphi_h'\|_{-1}\le 2\pi \|\varphi_h\|_0, \qquad |\widehat\varphi_h(0)|\le
\|\varphi_h\|_0, \qquad \|d_h\|_{-1}\le 1+\pi h
\]
(the last inequality follows from Lemma \ref{lemma:3.7}.)
Note now that the operator $S_h\to S_h^{-1}$ given by $\varphi_h \mapsto
\varphi_h'+\widehat\varphi_h(0)d_h$ is injective. Its inverse is $S_h^{-1}\ni
\delta_h \mapsto \mathrm D_{-1} \delta_h +\widehat\delta_h(0)(1-\mathrm
D_{-1}d_h)$. Then the second inequality of the statement follows from the fact
that
\[
\| \mathrm D_{-1} \delta_h\|_{0}\le \smallfrac1{2\pi}\|\delta_h\|_{-1}, \quad
|\widehat\delta_h(0)|\le \|\delta_h\|_{-1}, \quad \|1-\mathrm D_{-1}d_h\|_0\le
1+\smallfrac1{2\pi}\|d_h\|_{-1}\le 1+\smallfrac{h}2,
\]
where we have applied Lemma \ref{lemma:3.7} again.
\end{proof}

\begin{proposition}\label{prop:3.11}
There exist $h_0$ and $\beta_\varepsilon>0$ such that for $h \le h_0$,
\[
\sup_{0\neq \psi_h\in S_{h,\varepsilon}}\frac{| w_h^\circ(\psi_h,\varphi_h)|}{\|
\psi_h\|_0} \ge \beta_\varepsilon\|\varphi_h\|_0 \qquad \forall \varphi_h \in
S_h.
\]
\end{proposition}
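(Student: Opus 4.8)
The plan is to deduce this inf-sup condition from Lemmas \ref{lemma:3.7}--\ref{lemma:3.10} by a short perturbation argument, the point being that $w_h^\circ$ differs from the quantity controlled in Lemma \ref{lemma:3.10} only by a term involving the mean value $\widehat\varphi_h(0)$, which is itself $\mathcal O(h)$ relative to $\|\varphi_h\|_0$. Fix $\varphi_h\in S_h$ and introduce $\delta_h:=\varphi_h'+\widehat\varphi_h(0)\,d_h\in S_h^{-1}$. By the second inequality in Lemma \ref{lemma:3.8} we have $\|\delta_h\|_{-1}\ge c_2^{-1}\|\varphi_h\|_0$. Since $S_{h,\varepsilon}$ is finite dimensional, the supremum in the discrete inf-sup estimate \eqref{eq:3.22} is attained, so we may choose $0\neq\psi_h\in S_{h,\varepsilon}$ with
\[
|\{\delta_h,\mathrm A^*\psi_h\}|\ge \beta_\varepsilon\|\delta_h\|_{-1}\|\psi_h\|_0\ge \beta_\varepsilon c_2^{-1}\|\varphi_h\|_0\|\psi_h\|_0.
\]

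Next I would rewrite $w_h^\circ(\psi_h,\varphi_h)$ via Lemma \ref{lemma:3.9} and compare it with $\{\delta_h,\mathrm A^*\psi_h\}$. By linearity of the pairing, $\{\delta_h,\mathrm A^*\psi_h\}=\{\varphi_h',\mathrm A^*\psi_h\}+\widehat\varphi_h(0)\{d_h,\mathrm A^*\psi_h\}$, so subtracting this from the identity of Lemma \ref{lemma:3.9} gives
\[
w_h^\circ(\psi_h,\varphi_h)-\{\delta_h,\mathrm A^*\psi_h\}=\widehat\varphi_h(0)\big[(1,\mathrm A^*\psi_h)-\{d_h,\mathrm A^*\psi_h\}\big].
\]
Using $|\widehat\varphi_h(0)|\le\|\varphi_h\|_0$ together with estimate \eqref{eq:3.23} of Lemma \ref{lemma:3.10}, the right-hand side is bounded in absolute value by $C_\varepsilon h\,\|\varphi_h\|_0\|\psi_h\|_0$.

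Combining the last two displays by the triangle inequality,
\[
|w_h^\circ(\psi_h,\varphi_h)|\ge \big(\beta_\varepsilon c_2^{-1}-C_\varepsilon h\big)\|\varphi_h\|_0\|\psi_h\|_0.
\]
Choosing $h_0$ so small that $C_\varepsilon h_0\le \tfrac12\beta_\varepsilon c_2^{-1}$, dividing by $\|\psi_h\|_0>0$, and relabelling $\tfrac12\beta_\varepsilon c_2^{-1}$ as the new $\beta_\varepsilon$ yields the claimed bound for all $h\le h_0$.

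The genuinely hard part is \emph{not} in this proposition but is encapsulated in Lemma \ref{lemma:3.10}: the discrete inf-sup estimate \eqref{eq:3.22} rests on \cite[Proposition 8]{CeDoSa:2002} (after using Proposition \ref{prop:2.3} to write $\mathrm A=-\alpha\mathrm H+\mathrm K$ with $\mathrm K\in\mathcal E(-1)$), and this is exactly where the hypothesis $\varepsilon\not\in\tfrac12\mathbb Z$ is used — the estimate fails precisely at the unstable values $\varepsilon=\pm1/2$. Granting that, Proposition \ref{prop:3.11} is a routine perturbation; the only subtlety to watch is the bookkeeping of the mean-value term $\widehat\varphi_h(0)\,d_h$, which is handled cleanly by \eqref{eq:3.23} and Lemma \ref{lemma:3.7}.
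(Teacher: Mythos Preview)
Your argument is correct and follows essentially the same route as the paper: you use Lemma \ref{lemma:3.9} to identify the difference $w_h^\circ(\psi_h,\varphi_h)-\{\varphi_h'+\widehat\varphi_h(0)d_h,\mathrm A^*\psi_h\}$, bound it via \eqref{eq:3.23}, invoke the discrete inf-sup \eqref{eq:3.22}, and convert $\|\delta_h\|_{-1}$ to $\|\varphi_h\|_0$ through Lemma \ref{lemma:3.8}. The only cosmetic difference is that you fix a maximizing $\psi_h$ (using finite-dimensionality) whereas the paper works directly with the supremum; the resulting bound $(\beta_\varepsilon c_2^{-1}-C_\varepsilon h)\|\varphi_h\|_0$ is identical.
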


\begin{proof}
By Lemma \ref{lemma:3.9}, we can write
\[
w_h^\circ(\psi_h,\varphi_h)=\{\varphi_h'+\widehat\varphi_h(0)d_h,\mathrm
A^*\psi_h\}+\widehat\varphi_h(0)\Big((1,\mathrm A^*\psi_h)-\{d_h,\mathrm
A^*\psi_h\}\Big)
\]
Therefore, by Lemma \ref{lemma:3.10}, it follows that
\begin{eqnarray*}
\sup_{0\neq \psi_h\in S_{h,\varepsilon}}\frac{| w_h^\circ(\psi_h,\varphi_h)|}{\|
\psi_h\|_0} &\ge & 
\sup_{0\neq \psi_h\in S_{h,\varepsilon}}\frac{|
\{\varphi_h'+\widehat\varphi_h(0)d_h,\mathrm A^*\psi_h\}|}{\| \psi_h\|_0}-
C_\varepsilon\,h |\widehat\varphi_h(0)|\\
&\ge & \beta_\varepsilon\|\varphi_h'+\widehat\varphi_h(0)d_h\|_{-1} -
C_\varepsilon h \|\varphi_h\|_0\\
&\ge & \big( \beta_\varepsilon c_2^{-1}-C_\varepsilon h\big)\|\varphi_h\|_0
\qquad \forall \varphi_h \in S_h,
\end{eqnarray*}
where we have applied Lemma \ref{lemma:3.8} in the last inequality.
\end{proof}

\subsection{A perturbation argument}

Consider now the quadrature error
\begin{equation}\label{eq:3.2}
E_{ij}^\varepsilon:=\int_{Q_{ij}^\varepsilon} V_2(s,t)\mathrm ds\mathrm dt - h^2
V_2(t_{i+\varepsilon},t_j),  \qquad i,j\in \mathbb Z,
\end{equation}
where $Q_{ij}^\varepsilon:=(s_{i+\varepsilon},s_{i+1+\varepsilon})\times
(s_j,s_{j+1})$. Let
$E^\varepsilon$ be the $N\times N$ matrix whose entries are the values
$E_{ij}^\varepsilon$ for $i,j=0,\ldots,N-1$. In the sequel $|E|_p$ will denote
the $p$-norm of the matrix $E$ (for $p\in \{1,2,\infty\}$) and
$|E|_{\mathrm{Frob}}$ will denote its Frobenius norm.

\begin{proposition}\label{prop:3.2}
\[
|(\psi_h,\mathrm V_2\varphi_h)-\{Q_{h,\varepsilon}^{-1} \psi_h,\mathrm
V_2Q_h^{-1}\varphi_h\}|\le h^{-1} | E^\varepsilon|_2
\|\psi_h\|_0\|\varphi_h\|_0\qquad \forall \psi_h \in S_{h,\varepsilon}\,
\varphi_h \in S_h.
\]
\end{proposition}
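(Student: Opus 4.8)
The plan is to reduce both terms on the left-hand side to explicit bilinear expressions in the coefficient vectors of $\psi_h$ and $\varphi_h$, and then recognize the difference as a quadratic form associated with the matrix $E^\varepsilon$. Write $\varphi_h = \sum_{j=0}^{N-1}\varphi_j\chi_j$ and $\psi_h = \sum_{i=0}^{N-1}\psi_i\chi_{i+\varepsilon}$, and set $\bm\varphi = (\varphi_j)$, $\bm\psi = (\psi_i)$. On the one hand, $Q_h^{-1}\varphi_h = h\sum_j \varphi_j\delta_{t_j}$ and $Q_{h,\varepsilon}^{-1}\psi_h = h\sum_i \psi_i\delta_{t_{i+\varepsilon}}$ (here the midpoint of the interval $(s_{i+\varepsilon},s_{i+1+\varepsilon})$ is $t_{i+\varepsilon}$, and since $\chi_{i+\varepsilon}$ takes the value $1$ there, $\psi_h(t_{i+\varepsilon}) = \psi_i$), so $\{Q_{h,\varepsilon}^{-1}\psi_h,\mathrm V_2 Q_h^{-1}\varphi_h\} = h^2\sum_{i,j}\psi_i\varphi_j V_2(t_{i+\varepsilon},t_j)$. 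On the other hand, since $\chi_{i+\varepsilon}$ and $\chi_j$ are the (periodized) indicators of $(s_{i+\varepsilon},s_{i+1+\varepsilon})$ and $(s_j,s_{j+1})$ respectively, $(\psi_h,\mathrm V_2\varphi_h) = \sum_{i,j}\psi_i\varphi_j\int_{Q_{ij}^\varepsilon}V_2(s,t)\,\mathrm ds\,\mathrm dt$. Subtracting, the left-hand quantity equals $\sum_{i,j}\psi_i\varphi_j E_{ij}^\varepsilon = \bm\psi^{\!\top}E^\varepsilon\bm\varphi$.

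From here it is a matter of estimating this bilinear form by $|E^\varepsilon|_2\,\|\bm\psi\|_{\ell^2}\,\|\bm\varphi\|_{\ell^2}$ via Cauchy–Schwarz and the definition of the matrix $2$-norm, and then converting the Euclidean norms of the coefficient vectors back to the $H^0$ norms of $\psi_h$ and $\varphi_h$. Because the $\chi_j$ are orthonormal up to scaling — each has $L^2$-norm $h^{1/2}$ and they have pairwise disjoint supports — one has $\|\varphi_h\|_0^2 = h\sum_j|\varphi_j|^2 = h\,\|\bm\varphi\|_{\ell^2}^2$, and identically $\|\psi_h\|_0^2 = h\,\|\bm\psi\|_{\ell^2}^2$ (the staggering by $\varepsilon$ does not affect orthogonality of the $\chi_{i+\varepsilon}$ among themselves). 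Hence $\|\bm\varphi\|_{\ell^2}\|\bm\psi\|_{\ell^2} = h^{-1}\|\psi_h\|_0\|\varphi_h\|_0$, and we obtain
\[
|\bm\psi^{\!\top}E^\varepsilon\bm\varphi| \le |E^\varepsilon|_2\,\|\bm\psi\|_{\ell^2}\,\|\bm\varphi\|_{\ell^2} = h^{-1}|E^\varepsilon|_2\,\|\psi_h\|_0\,\|\varphi_h\|_0,
\]
which is exactly the claimed bound.

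There is no real obstacle here; the only points requiring a little care are bookkeeping ones. First, one must be consistent about which midpoint quadrature node belongs to which staggered cell — i.e. that the quadrature point for $(s_{i+\varepsilon},s_{i+1+\varepsilon})$ is $t_{i+\varepsilon} = (i+\varepsilon)h$, matching the definition \eqref{eq:1.4} and the fact that $\chi_{i+\varepsilon}$ equals $1$ there — so that the $\{Q_{h,\varepsilon}^{-1}\psi_h,\mathrm V_2 Q_h^{-1}\varphi_h\}$ term really produces $h^2 V_2(t_{i+\varepsilon},t_j)$ with the same index pairing as in the exact double integral over $Q_{ij}^\varepsilon$. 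Second, one should note that $\mathrm V_2\varphi_h$ is continuous (indeed smooth, since $\mathrm V_2\in\mathcal E(-1)$ maps into $H^2\subset\mathcal C^0$), so that the brackets $\{\,\cdot\,,\,\cdot\,\}$ involving Dirac deltas at $t_{i+\varepsilon}$ are legitimate in the sense of Section \ref{sec:2.3}. With those observations in place the proof is the short computation above.
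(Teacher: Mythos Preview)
Your proof is correct and follows exactly the same route as the paper: expand both terms in the coefficient bases, identify the difference as $\sum_{i,j}\psi_i E_{ij}^\varepsilon\varphi_j$, and use $\|\psi_h\|_0=h^{1/2}\|\bm\psi\|_{\ell^2}$, $\|\varphi_h\|_0=h^{1/2}\|\bm\varphi\|_{\ell^2}$ together with the definition of the matrix $2$-norm. The additional bookkeeping remarks you include (consistency of the midpoint indexing and continuity of $\mathrm V_2\varphi_h$) are sound and simply make explicit what the paper leaves to the reader.
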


\begin{proof}
If we decompose $\psi_h=\sum_{i=0}^{N-1} \psi_i \chi_{i+\varepsilon}$ and
$\varphi_h=\sum_{j=0}^{N-1} \varphi_j \chi_j$, it is easy to see that
\[
(\psi_h,\mathrm V_2\varphi_h)-\{Q_{h,\varepsilon}^{-1} \psi_h,\mathrm
V_2Q_h^{-1}\varphi_h\}=\sum_{i,j=0}^{N-1} \psi_i E_{ij}^\varepsilon \varphi_j.
\]
The result is then straightforward noticing that
$\|\psi_h\|_0=h^{1/2}|(\psi_0,\ldots,\psi_{N-1})|$ and $\|\varphi_h\|_0=h^{1/2}
|(\varphi_0,\ldots,\varphi_{N-1})|,$
where $|\punto|$ is the Euclidean norm in $\mathbb C^N$.
\end{proof}

In order to simplify some forthcoming arguments, let us restrict (without loss
of generality) $\varepsilon$ to be in $[-1/2,1/2]\setminus\{0\}$ (the
restriction $\varepsilon\neq\pm1/2$ is not needed for these arguments). 

\begin{lemma}\label{lemma:3.3}
There exists $C_\varepsilon$ such that for all $h$
\[
|E_{ij}^\varepsilon|\le C_\varepsilon h^2 |\log h| \qquad \forall i,j.
\]
Moreover $C_\varepsilon$ diverges like $\log |\varepsilon|^{-1}$ as
$|\varepsilon|\to 0$.
\end{lemma}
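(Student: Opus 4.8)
The plan is to estimate the quadrature error $E_{ij}^\varepsilon$ directly from its definition \eqref{eq:3.2}, by Taylor-expanding the integrand $V_2(s,t)$ around the quadrature node $(t_{i+\varepsilon},t_j)$ over the square $Q_{ij}^\varepsilon$. Recall from \eqref{eq:1.0a} that $V_2(s,t)=\frac{\imath}{2\pi}A_2(s,t)\log(\sin^2\pi(s-t))+K_2(s,t)$ with $A_2,K_2$ smooth and $1$-periodic. The $K_2$ contribution is completely harmless: midpoint quadrature on a square of side $h$ applied to a smooth function has error $\mathcal O(h^2\cdot h^2)=\mathcal O(h^4)$, uniformly in $i,j$, so this piece is $\mathcal O(h^4)$, well inside the claimed bound and with a constant independent of $\varepsilon$. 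Thus everything reduces to the logarithmic part, i.e. to controlling
\[
\int_{Q_{ij}^\varepsilon} A_2(s,t)\log(\sin^2\pi(s-t))\,\mathrm ds\,\mathrm dt - h^2 A_2(t_{i+\varepsilon},t_j)\log(\sin^2\pi(t_{i+\varepsilon}-t_j)).
\]

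The next step is to separate near-diagonal from off-diagonal index pairs. For pairs with $|t_{i+\varepsilon}-t_j|$ bounded away from zero (say the distance from $s-t$ to $\mathbb Z$ is at least $2h$ for all $(s,t)\in Q_{ij}^\varepsilon$), the kernel $\log(\sin^2\pi(s-t))$ is smooth on $Q_{ij}^\varepsilon$ with derivatives bounded by a constant times a negative power of the distance; on such squares the midpoint rule again gives $\mathcal O(h^4)$ times that distance factor, and summing/bounding these contributes $\mathcal O(h^2)$ at worst — actually one only needs the uniform bound here, and for a single entry it is $\mathcal O(h^4|\log h|^{-1})$-ish, certainly $\le C h^2|\log h|$. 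The real work is the $\mathcal O(1)$ many near-diagonal pairs, where $s-t$ comes close to an integer. Here one writes $\log(\sin^2\pi(s-t))=\log((s-t)^2)+\log\big((\sin\pi(s-t))^2/(\pi(s-t))^2\big)+\text{const}$; the second factor is smooth near the diagonal (handled as before), so it remains to estimate the pure $\log((s-t)^2)$ contribution. Freezing $A_2$ at the node and absorbing the smooth remainder $A_2(s,t)-A_2(t_{i+\varepsilon},t_j)=\mathcal O(h)$ into an $\mathcal O(h^3|\log h|)$ term, one is left with a model integral $\int\int \log|s-t|\,\mathrm ds\,\mathrm dt$ over a unit-scale square shifted by $\varepsilon$, minus $h^2\log(h|i-j+\varepsilon|)$, after the change of variables $s=t_{i+\varepsilon}+h\sigma$, $t=t_j+h\tau$. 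This model quantity can be computed (or bounded) explicitly: the $h^2\log h$ pieces cancel, and what survives is $h^2$ times the quadrature error of the midpoint rule for $\log|\,\cdot\,|$ on the reference square, which is finite and depends only on $\varepsilon$.

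The main obstacle — and the source of both the $|\log h|$ factor and the $\log|\varepsilon|^{-1}$ blow-up of $C_\varepsilon$ — is precisely the single ``most diagonal'' pair, where $i=j$ (or $i-j\in\{0,-1\}$), so that the integration square $Q_{ii}^\varepsilon$ straddles the singularity line $s=t$ at distance $\sim|\varepsilon|h$, while the node $t_{i+\varepsilon}-t_i=\varepsilon h$ also sits at distance $|\varepsilon|h$ from it. For this term the integral $\int_{Q_{ii}^\varepsilon}\log|s-t|\,\mathrm ds\,\mathrm dt$ is $\mathcal O(h^2\log h)$ (the $\log$ being integrable, this is finite, with the $h^2\log h$ scaling coming out of the substitution), and the subtracted term $h^2\log(|\varepsilon|h)=h^2\log h+h^2\log|\varepsilon|$ matches the $h^2\log h$ but leaves a residual $\mathcal O(h^2)\cdot\log|\varepsilon|^{-1}$ from the node being pushed toward the singularity as $\varepsilon\to0$. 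So the careful bookkeeping is: isolate the $\mathcal O(1)$ near-diagonal indices, inside those isolate the $i=j$-type term, rescale, and show the reference-square quadrature error for $\log|\,\cdot\,|$ is $\le C(1+\log|\varepsilon|^{-1})$; all other index pairs and the smooth/$A_2$-variation remainders are $\mathcal O(h^3|\log h|)$ or better with $\varepsilon$-uniform constants. I would write the $|\log h|$ in the statement as a generous catch-all — in fact each individual $E_{ij}^\varepsilon$ is $\mathcal O(h^2)$ except the genuinely diagonal ones which are $\mathcal O(h^2|\log h|)$ — so proving the uniform bound $|E_{ij}^\varepsilon|\le C_\varepsilon h^2|\log h|$ amounts to the diagonal estimate above plus routine Taylor/midpoint-rule bounds for everything else.
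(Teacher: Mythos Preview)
Your approach is sound but takes a considerably more elaborate route than the paper. The paper's proof is much cruder: it bounds the two pieces of $E_{ij}^\varepsilon$ \emph{separately} via the triangle inequality, with no Taylor expansion, no rescaling, and no attempt to exploit cancellation between the integral and the node value. From the pointwise bound $|V_2(s,t)|\le C_1\log|s-t|^{-1}+C_2$ on the strip $|s-t|\le 3/4$, one gets on the one hand $h^2|V_2(t_{i+\varepsilon},t_j)|\le h^2(C_1\log|\varepsilon|^{-1}+C_1\log h^{-1}+C_2)$ (since $|t_{i+\varepsilon}-t_j|\ge|\varepsilon|h$), and on the other hand $\big|\int_{Q_{ij}^\varepsilon}V_2\big|\le Ch^2\log h^{-1}$ by direct integration of the logarithm, with a small case split according to whether $Q_{ij}^\varepsilon$ meets the diagonal or stays at distance $\ge h/2$. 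Adding these two crude bounds already gives $C_\varepsilon h^2|\log h|$ with $C_\varepsilon\sim\log|\varepsilon|^{-1}$.

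Your argument (freeze $A_2$, rescale the near-diagonal model integral, observe that the $h^2\log h$ contributions from the integral and from the node value cancel) is in fact the argument for the \emph{sharper} bound $|E_{ij}^\varepsilon|\le C_\varepsilon h^2$ without the $|\log h|$; and your off-diagonal midpoint-error estimate is essentially what the paper proves separately as Lemma~\ref{lemma:3.4}. So you are doing more work than Lemma~\ref{lemma:3.3} requires, and this extra precision leads to some internal confusion in your write-up: having just argued that the $h^2\log h$ pieces cancel for the diagonal pair, you then say the diagonal entries are $\mathcal O(h^2|\log h|)$. By your own computation they are $\mathcal O(h^2)$; the $|\log h|$ in the stated lemma survives only because the paper does not bother with the cancellation. (Also, the aside ``for a single entry it is $\mathcal O(h^4|\log h|^{-1})$-ish'' is not right: at distance $\sim kh$ the second derivative of the log is $\sim (kh)^{-2}$, giving an entry of size $\sim h^2/k^2$, not $h^4|\log h|^{-1}$.)
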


\begin{proof} 
Since $E^\varepsilon_{i,j\pm N}=E^\varepsilon_{ij}$, we can choose $|i-j|\le
N/2$ and then
\begin{equation}\label{eq:3.3o}
|s-t|\le h + h(\smallfrac{N}2+|\varepsilon|)\le \smallfrac32 h\,+\smallfrac12
\le \smallfrac34, \qquad (s,t)\in Q_{ij}^\varepsilon,
\end{equation}
as long as $h \le 1/6$. (This is not a restriction, since we are only missing
the values $1\le N\le 5$ that can be incorporated by modifying the constants in
the final bound.) Also
\begin{equation}\label{eq:3.3oo}
|\varepsilon|\, h \le |t_{i+\varepsilon}-t_j| \le \smallfrac34.
\end{equation}
Because of the form of the kernel function $V_2$ (see \eqref{eq:1.0a}), in the
diagonal strip
$D:=\{ (s,t)\,:\, |s-t|\le 3/4\}$,
we can bound
\begin{equation}\label{eq:3.3}
|V_2(s,t)|\le C_1\log |s-t|^{-1}+ C_2, \qquad (s,t)\in D.
\end{equation}
Therefore, by \eqref{eq:3.3oo},
\begin{equation}\label{eq:3.3a}
|V_2(t_{i+\varepsilon},t_j)| \le C_1 \log |\varepsilon|^{-1} + C_1\log h^{-1} +
C_2, \qquad \forall i,j.
\end{equation}
The choice of indices $|i-j|\le N/2$  ensures that $Q_{ij}^\varepsilon \subset
D$. If $\mathrm{dist}(Q_{ij}^\varepsilon,\{(s,s)\,:\,s\in \mathbb R\})\ge h/2$,
then by \eqref{eq:3.3},
\begin{equation}\label{eq:3.4}
\Big|\int_{Q_{ij}^\varepsilon} V_2(s,t)\mathrm ds\mathrm dt\Big|\le C_1 h^2
(\log 2+\log h^{-1})+C_2 h^2.
\end{equation}
If, on the other hand, $\mathrm{dist}({Q_{ij}^\varepsilon},\{(s,s)\,:\,s\in
\mathbb R\})\le h/2$, a simple geometric argument shows that
${Q_{ij}^\varepsilon}\subset \{ (s,t)\,:\, t\in (s_j,s_{j+1}), |s-t|< c\,h\}$,
where $c:=\sqrt2+1/2<e.$ Therefore
\begin{equation}\label{eq:3.5}
\Big|\int_{Q_{ij}^\varepsilon} V_2(s,t)\mathrm ds\mathrm dt\Big|\le 2C_1h
\int_0^{c\,h}\hspace{-10pt}\log u^{-1}\mathrm du +C_2h^2= 2C_1c h^2 (1-\log c
+\log h^{-1})+C_2h^2.
\end{equation}
We can now gather the bounds \eqref{eq:3.3a}, \eqref{eq:3.4} and \eqref{eq:3.5},
rearrange terms and take upper bounds to prove the result.
\end{proof}

\begin{lemma} \label{lemma:3.4}
There exists $C$ independent of $\varepsilon$ such that for all $h$
\[
|E_{ij}^\varepsilon|\le  C\, \frac{h^2}{|i-j|^2} \qquad \mbox{for $i,j$ such
that } 2\le |i-j|\le \frac{N}2.
\]
\end{lemma}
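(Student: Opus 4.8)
The plan is to exploit the fact that for indices with $2\le|i-j|\le N/2$, the rectangle $Q_{ij}^\varepsilon=(s_{i+\varepsilon},s_{i+1+\varepsilon})\times(s_j,s_{j+1})$ stays a definite distance away from the diagonal $\{s=t\}$, so that the kernel $V_2$ is smooth on $Q_{ij}^\varepsilon$ and the quadrature error $E_{ij}^\varepsilon$ is controlled by derivatives of $V_2$ on that rectangle. First I would record the geometric fact: with $|i-j|\ge 2$ and $|\varepsilon|\le 1/2$, every point $(s,t)\in Q_{ij}^\varepsilon$ satisfies $|s-t|\ge (|i-j|-1)h - |\varepsilon|h \ge c_0 |i-j| h$ for a universal $c_0>0$ (using that after periodic reduction we may also assume $|i-j|\le N/2$, so there is no wraparound and $|s-t|\le 3/4$, as in the previous lemma). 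On this rectangle $V_2$ is $\mathcal C^\infty$, and from the form \eqref{eq:1.0a} the relevant derivatives of the logarithmic part decay: $|\partial_s^a\partial_t^b V_2(s,t)|\le C_{a,b}\,|s-t|^{-(a+b)}$ for $(s,t)\in D$ away from the diagonal, with $C_{a,b}$ independent of $\varepsilon$.

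Next I would apply a standard two-dimensional midpoint/tensor-product quadrature error bound on the single cell $Q_{ij}^\varepsilon$. Writing $m$ for the midpoint $(t_{i+\varepsilon},t_j)$ of the rectangle, midpoint quadrature on a rectangle of side $h$ has error
\[
\Big|\int_{Q_{ij}^\varepsilon} V_2 - h^2 V_2(m)\Big|\le C\,h^4\,\big(\,\|\partial_s^2 V_2\|_{L^\infty(Q_{ij}^\varepsilon)}+\|\partial_t^2 V_2\|_{L^\infty(Q_{ij}^\varepsilon)}\,\big),
\]
which is the usual Peano-kernel estimate for the product midpoint rule (the first-order and mixed second-order terms integrate to zero by symmetry of the rectangle about $m$, so only the pure second derivatives appear). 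Combining this with the derivative bound $\|\partial_s^2 V_2\|_{L^\infty(Q_{ij}^\varepsilon)}+\|\partial_t^2 V_2\|_{L^\infty(Q_{ij}^\varepsilon)}\le C\,(c_0|i-j|h)^{-2}$ gives
\[
|E_{ij}^\varepsilon|\le C\,h^4\cdot \frac{1}{(c_0|i-j|h)^2}=C'\,\frac{h^2}{|i-j|^2},
\]
with $C'$ independent of $\varepsilon$, which is the assertion.

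The main obstacle is bookkeeping rather than conceptual: one must be careful that the constant in the derivative bound for $V_2$ genuinely does not blow up as $|\varepsilon|\to 0$ (it does not, because for $|i-j|\ge 2$ the separation from the diagonal is bounded below by $h/2$ regardless of $\varepsilon$, in contrast to Lemma \ref{lemma:3.3} where the $|i-j|\le 1$ cells force the $\log|\varepsilon|^{-1}$ dependence), and that the periodic reduction to $|i-j|\le N/2$ together with the smooth periodic extension of $A_2,K_2$ keeps everything on the strip $D$ where \eqref{eq:3.3}-type bounds and their differentiated analogues hold. I would also note, as in the proof of Lemma \ref{lemma:3.3}, that restricting to $h\le 1/6$ costs nothing, since the finitely many small values of $N$ are absorbed into the constant.
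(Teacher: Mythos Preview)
Your proposal is correct and follows essentially the same approach as the paper: establish a lower bound $|s-t|\ge c_0|i-j|h$ on $Q_{ij}^\varepsilon$ (the paper gets $c_0=1/4$ via the triangle inequality), bound the pure second derivatives of $V_2$ by $C|s-t|^{-2}$ on the strip $D$, and apply the product midpoint error estimate $|E_{ij}^\varepsilon|\le C h^4(\|\partial_s^2 V_2\|_{L^\infty(Q_{ij}^\varepsilon)}+\|\partial_t^2 V_2\|_{L^\infty(Q_{ij}^\varepsilon)})$. The paper is slightly more explicit in writing $\partial_s^2 V_2 = B_1/|s-t|^2 + B_2$ with bounded $B_1,B_2$ and then absorbing the resulting $h^4$ term into $h^2/|i-j|^2$ via $|i-j|h\le 1/2$, but this is only a cosmetic difference from your direct bound.
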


\begin{proof}
If $(s,t)\in Q_{ij}^\varepsilon$, $2\le |i-j|\le N/2$, and $|\varepsilon|\le
1/2$, then
\begin{equation}\label{eq:3.9}
|s-t|\ge |t_{i+\varepsilon}-t_j|-|t-t_j|-|s-t_{i+\varepsilon}| \ge h |i-j| -
(|\varepsilon|+1) h \ge \smallfrac14 h |i-j|.
\end{equation}
Recall first the definition of $D:=\{(s,t)\,:\,|s-t|\le 3/4\}$ given in the
proof of Lemma \ref{lemma:3.3}.
Taking derivatives of the kernel function $V_2$, we can write
\[
\frac{\partial^2V_2}{\partial s^2}(s,t)=\frac{B_1(s,t)}{|s-t|^2}+B_2(s,t),
\qquad \frac{\partial^2V_2}{\partial
t^2}(s,t)=\frac{B_3(s,t)}{|s-t|^2}+B_4(s,t),
\]
where  for $\ell\in\{1,2,3,4\}$ the functions $B_\ell\in\mathcal C^0(D)$ are
bounded. Using the error bound for the midpoint formula in two variables, we can
estimate
\begin{eqnarray*}
|E_{ij}^\varepsilon| & \le & \frac{h^4}{24} \max_{(s,t)\in
\overline{Q_{ij}^\varepsilon}} \Big( \Big| \frac{\partial^2V_2}{\partial
s^2}(s,t)\Big|+ \Big| \frac{\partial^2V_2}{\partial t^2}(s,t)\Big|\Big)\\
&\le & \frac{h^4}{24}\Big( \| B_2\|_{L^\infty}+\|B_4\|_{L^\infty} + 
(\|B_1\|_{L^\infty}+\|B_3\|_{L^\infty}) \max_{(s,t)\in
\overline{Q_{ij}^\varepsilon}} |s-t|^{-2}\Big)\\
&\le & C_1 h^4 + C_2\frac{h^2}{|i-j|^2}\le
\Big(\frac{C_1}4+C_2\Big)\frac{h^2}{|i-j|^2}, 
\end{eqnarray*}
where we have applied \eqref{eq:3.9} and the upper bound $|i-j| h\le 1/2$.
\end{proof}

\begin{lemma}\label{lemma:3.5} There exists $C_\varepsilon$ such that for all
$h$,
$|E^\varepsilon|_2\le C_\varepsilon h^{3/2}.$
\end{lemma}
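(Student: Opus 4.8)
The plan is to estimate $|E^\varepsilon|_2$ by interpolating between the easily-accessible $1$-norm (equivalently $\infty$-norm) of the matrix and using the entrywise bounds already established in Lemmas \ref{lemma:3.3} and \ref{lemma:3.4}. Since $E^\varepsilon$ is a real matrix, $|E^\varepsilon|_2 \le (|E^\varepsilon|_1 |E^\varepsilon|_\infty)^{1/2}$, and because of the circulant-type structure (the entries $E^\varepsilon_{ij}$ depend in a controlled way on $i-j$, and $E^\varepsilon_{i,j\pm N}=E^\varepsilon_{ij}$) the row sums and column sums are comparable. So it suffices to bound $\sum_{j} |E^\varepsilon_{ij}|$ uniformly in $i$ by $C_\varepsilon h^{3/2}$.

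First I would split the sum over $j$ (for fixed $i$, with indices taken so that $|i-j|\le N/2$) into the near-diagonal part $|i-j|\le 1$ and the far part $2\le |i-j|\le N/2$. For the near-diagonal part there are only a bounded number of terms (namely $|i-j|\in\{0,1\}$, counting periodic wrap-around, so at most three or so), and each is $\mathcal O(h^2|\log h|)$ by Lemma \ref{lemma:3.3}; hence this contribution is $\mathcal O(h^2|\log h|)$, which is $o(h^{3/2})$ and in particular $\le C_\varepsilon h^{3/2}$. For the far part I would use Lemma \ref{lemma:3.4}: $\sum_{2\le |i-j|\le N/2} |E^\varepsilon_{ij}| \le C h^2 \sum_{m\ge 2} m^{-2} \le C' h^2$. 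Adding the two pieces gives a row-sum bound of order $h^2|\log h| + h^2 = \mathcal O(h^2|\log h|)$, which is actually better than $h^{3/2}$; so $|E^\varepsilon|_1$ and $|E^\varepsilon|_\infty$ are both $\mathcal O(h^2|\log h|)$ and therefore $|E^\varepsilon|_2 \le C_\varepsilon h^2|\log h| \le C_\varepsilon h^{3/2}$ for $h$ small.

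Actually the $h^{3/2}$ in the statement is weaker than what this argument yields, which suggests the intended proof may be routing through the Frobenius norm instead: $|E^\varepsilon|_2 \le |E^\varepsilon|_{\mathrm{Frob}} = (\sum_{i,j} |E^\varepsilon_{ij}|^2)^{1/2}$. Using $|E^\varepsilon_{ij}|^2 \le C_\varepsilon^2 h^4\log^2 h$ for the $\mathcal O(N)$ near-diagonal entries contributes $\mathcal O(N h^4\log^2 h) = \mathcal O(h^3\log^2 h)$ to the sum of squares, while $\sum_{2\le|i-j|\le N/2}|E^\varepsilon_{ij}|^2 \le C h^4 N \sum_{m\ge 2} m^{-4} = \mathcal O(h^3)$; taking square roots gives $|E^\varepsilon|_{\mathrm{Frob}} = \mathcal O(h^{3/2}|\log h|)$. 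Either route works; I would present the row-sum/column-sum version since it gives the cleaner constant and makes the $\varepsilon$-dependence explicit through Lemma \ref{lemma:3.3} (the constant diverging like $\log|\varepsilon|^{-1}$), then simply note $h^2|\log h|\le h^{3/2}$ for $h\le h_0$.

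The only mildly delicate point is handling the periodic wrap-around carefully: one must fix a set of representatives $|i-j|\le N/2$ for each row, check that the near-diagonal block really has $\mathcal O(1)$ entries (the cases $|i-j|\le 1$ are covered by Lemma \ref{lemma:3.3} but not by Lemma \ref{lemma:3.4}, which needs $|i-j|\ge 2$), and observe that row sums and column sums of $E^\varepsilon$ differ only by the transpose structure so that $|E^\varepsilon|_1$ is controlled by the same bound as $|E^\varepsilon|_\infty$. I do not expect any real obstacle here — it is a direct assembly of the two preceding lemmas via an elementary matrix-norm inequality.
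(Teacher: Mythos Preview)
Your proposal is correct, and in fact your first route is both simpler and sharper than what the paper does. The paper splits $E^\varepsilon=E^\varepsilon_{\mathrm{trid}}+E^\varepsilon_{\mathrm{off}}$, bounds the tridiagonal block exactly as you do (three entries per row, Lemma~\ref{lemma:3.3}, then $|E^\varepsilon_{\mathrm{trid}}|_2\le(|E^\varepsilon_{\mathrm{trid}}|_1|E^\varepsilon_{\mathrm{trid}}|_\infty)^{1/2}\le C_\varepsilon h^2|\log h|$), but then controls the off-diagonal block via the Frobenius norm and Lemma~\ref{lemma:3.4}, obtaining $|E^\varepsilon_{\mathrm{off}}|_2^2\le \sum_{i}\sum_{2\le|i-j|\le N/2} C^2 h^4|i-j|^{-4}=\mathcal O(h^3)$, hence $|E^\varepsilon_{\mathrm{off}}|_2=\mathcal O(h^{3/2})$. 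The $h^{3/2}$ in the statement is precisely this Frobenius contribution. Your observation that one can instead apply the $|\cdot|_1$--$|\cdot|_\infty$ interpolation to the \emph{entire} matrix, summing $h^2|i-j|^{-2}$ rather than squaring, yields $|E^\varepsilon|_2=\mathcal O(h^2|\log h|)$ directly and avoids the split altogether; this is a genuine improvement over the paper's argument. Your second route (Frobenius on the whole matrix) is essentially the paper's treatment of $E^\varepsilon_{\mathrm{off}}$ extended to include the near-diagonal terms, at the cost of an extra $|\log h|$ factor that the paper's split removes. Either way the stated bound follows; your row-sum argument is the cleanest of the three.
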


\begin{proof} We first decompose the matrix
$E^\varepsilon=E^\varepsilon_{\mathrm{trid}}+E^\varepsilon_{\mathrm{off}}$,
where $E^\varepsilon_{\mathrm{trid}}$ gathers all tridiagonal terms (modulo $N$)
of $E^\varepsilon$
\[
E^\varepsilon_{\mathrm{trid},ij}:= \left\{ \begin{array}{ll} E_{ij}^\varepsilon,
& \mbox{$|i-j| \le 1$ or $|i-j|=N-1$},\\
0, & \mbox{otherwise.}
\end{array}\right.
\]
Using Lemma \ref{lemma:3.3} and the fact that $E^\varepsilon_{\mathrm{trid}}$
has only three non-vanishing elements in each row and column, it is easy to
estimate
$|E^\varepsilon_{\mathrm{trid}}|_1+ |E^\varepsilon_{\mathrm{trid}}|_\infty\le 3
C_\varepsilon h^2 |\log h|$.
Therefore, by the Riesz-Thorin theorem
\begin{equation}\label{eq:3.10}
|E^\varepsilon_{\mathrm{trid}}|_2\le
|E^\varepsilon_{\mathrm{trid}}|_1^{1/2}|E^\varepsilon_{\mathrm{trid}}|_\infty^{
1/2}\le 3 C_\varepsilon h^2 |\log h|.
\end{equation}
On the other hand, we can estimate the off-diagonal terms using Lemma
\ref{lemma:3.4} (recall that we can move indices so that $|i-j|\le N/2$)
\begin{eqnarray}\nonumber
|E^\varepsilon_{\mathrm{off}}|_2^2 &\le &
|E^\varepsilon_{\mathrm{off}}|_{\mathrm{Frob}}^2 \le\sum_{i=0}^{N-1} \sum_{2\le
|i-j|\le N/2} |E_{ij}^\varepsilon|^2\\ 
& \le& C^2 h^4 \sum_{i=0}^{N-1} \sum_{2\le |i-j|\le \infty} \frac1{|i-j|^4}=h^3
2 C^2 \sum_{k=2}^\infty \frac1{k^4}. \label{eq:3.11}
\end{eqnarray}
Gathering \eqref{eq:3.10} and \eqref{eq:3.11}, the result follows.
\end{proof}

\begin{proof}[Proof of Theorem \ref{the:3.1}.]
Note that
\[
w_h(\psi_h,\varphi_h)=w_h^\circ(\psi_h,\varphi_h)+\{Q_{h,\varepsilon}^{-1}
\psi_h,\mathrm V_2Q_h^{-1}\varphi_h\}-(\psi_h,\mathrm V_2\varphi_h).
\]
As a direct consequence of Proposition \ref{prop:3.2} and Lemma \ref{lemma:3.5},
we can bound
\begin{equation}\label{eq:3.30}
|(\psi_h,\mathrm V_2\varphi_h)-\{Q_{h,\varepsilon}^{-1} \psi_h,\mathrm
V_2Q_h^{-1}\varphi_h\}|\le C_\varepsilon
h^{1/2}\|\psi_h\|_0\|\varphi_h\|_0\qquad \forall \psi_h \in S_{h,\varepsilon},\,
\varphi_h \in S_h.
\end{equation}
The proof is thus a simple consequence of this bound and Proposition
\ref{prop:3.11}.
\end{proof}

\section{Consistency error analysis}\label{sec:4}

The analysis of the consistency error is based in the careful use of estimates
for quadrature error and the combination of asymptotic expansions of discrete
and continuous operators. We start this section with some technical results that
will be needed in the sequel.

\subsection{Estimates for quadrature error}

\begin{lemma}\label{lemma:4.1}
The following bounds hold for all $h$ and all $\varepsilon$:

\begin{enumerate}
\item[{\rm (a)}] $|(Q_{h,\varepsilon}^{-1}\psi_h,u)-(\psi_h,u)|\le \frac12(\pi
h)^2 \|\psi_h\|_0\|u \|_2$ for all $\psi_h \in S_{h,\varepsilon}$ and $u\in
H^2$.
\item[{\rm (b)}] $|(Q_{h,\varepsilon}^{-1}\psi_h,u)|\le \|\psi_h\|_0(\|u\|_0+\pi
h\|u\|_1)$ for all $\psi_h \in S_{h,\varepsilon}$ and $u\in H^1$.
\end{enumerate}
\end{lemma}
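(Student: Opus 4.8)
The plan is to reduce both bounds to the one-panel midpoint-rule quadrature error and then to sum the panels by a double Cauchy--Schwarz argument, in the same spirit as the proof of Lemma \ref{lemma:3.7}.

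First I would fix notation. Writing $\psi_h=\sum_{j=0}^{N-1}\psi_j\chi_{j+\varepsilon}$ we have $\psi_h(t_{j+\varepsilon})=\psi_j$ and $\|\psi_h\|_0^2=h\sum_{j=0}^{N-1}|\psi_j|^2$, since the $\chi_{j+\varepsilon}$ have pairwise disjoint supports of length $h$. Denote by $I_j:=(s_{j+\varepsilon},s_{j+1+\varepsilon})$ the $j$-th panel (of length $h$, with midpoint $t_{j+\varepsilon}$) and by
\[
E_j(u):=\int_{I_j}u(t)\,\mathrm dt-h\,u(t_{j+\varepsilon})
\]
its midpoint quadrature error. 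A direct computation from \eqref{eq:2.6} gives
\[
(Q_{h,\varepsilon}^{-1}\psi_h,u)=h\sum_{j=0}^{N-1}\psi_j\,u(t_{j+\varepsilon})=(\psi_h,u)-\sum_{j=0}^{N-1}\psi_j\,E_j(u),
\]
where for the last equality one uses $\sum_j\psi_j\int_{I_j}u=(\psi_h,u)$. Thus (a) amounts to estimating $|\sum_j\psi_j E_j(u)|$, while in (b) we additionally keep the term $(\psi_h,u)$, which is bounded by $\|\psi_h\|_0\|u\|_0$ via Cauchy--Schwarz in $H^0$.

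Next I would record the per-panel estimates. Since the midpoint rule is exact on affine functions, Taylor's formula with integral remainder (legitimate since $H^2\subset W^{2,1}$ and $H^1\subset W^{1,1}$ on the circle) together with Fubini's theorem on each half of $I_j$ yields
\[
|E_j(u)|\le\frac{h^2}{8}\int_{I_j}|u''(t)|\,\mathrm dt\quad(u\in H^2),\qquad |E_j(u)|\le\frac{h}{2}\int_{I_j}|u'(t)|\,\mathrm dt\quad(u\in H^1);
\]
these are just the Peano-kernel bounds for the midpoint rule, with kernel $\tfrac12(|t-t_{j+\varepsilon}|-\tfrac h2)^2$ (sup-norm $h^2/8$) in the first case and a kernel of sup-norm $h/2$ in the second.

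Finally I would assemble the panels. Applying Cauchy--Schwarz first on each $I_j$ and then in the sum over $j$, and using $\big(\sum_j|\psi_j|^2\big)^{1/2}=h^{-1/2}\|\psi_h\|_0$ together with $\big(\sum_j\int_{I_j}|u''|^2\big)^{1/2}=\|u''\|_0$,
\[
\Big|\sum_{j=0}^{N-1}\psi_j\,E_j(u)\Big|\le\frac{h^2}{8}\,h^{1/2}\Big(\sum_j|\psi_j|^2\Big)^{1/2}\|u''\|_0=\frac{h^2}{8}\,\|\psi_h\|_0\,\|u''\|_0.
\]
The Fourier characterisation \eqref{eq:2.2} gives $\|u''\|_0\le(2\pi)^2\|u\|_2$, and since $(2\pi)^2h^2/8=\tfrac12(\pi h)^2$ this is exactly (a). The identical chain with $u'$ and $h/2$ in place of $u''$ and $h^2/8$, together with $\|u'\|_0\le2\pi\|u\|_1$, yields $|\sum_j\psi_j E_j(u)|\le\pi h\|\psi_h\|_0\|u\|_1$; adding the already-noted bound for $(\psi_h,u)$ gives (b). There is essentially no obstacle here: the only point to watch is keeping the sharp per-panel constants $h^2/8$ and $h/2$ (rather than the crude $h^2$ and $h$), so that the final constants come out exactly as $\tfrac12(\pi h)^2$ and $\pi h$.
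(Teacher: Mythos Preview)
Your proof is correct and essentially identical to the paper's: both reduce to the per-panel midpoint bound $|E_j(u)|\le\tfrac{h^2}{8}\int_{I_j}|u''|$ (resp.\ $\tfrac h2\int_{I_j}|u'|$) and then sum with Cauchy--Schwarz. The only cosmetic difference is that the paper observes $\sum_j|\psi_j|\int_{I_j}|u''|=\int_0^1|\psi_h|\,|u''|$ directly and applies a single Cauchy--Schwarz on $(0,1)$, whereas you split this into a panelwise Cauchy--Schwarz followed by one in $j$; the outcome and constants are the same.
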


\begin{proof}
Using Taylor expansions, it is easy to prove the following well-known bound for
the midpoint formula
\[
\Big|\int_{c-\frac{h}2}^{c+\frac{h}2} u(t)\mathrm d t-hu(c)\Big|\le \frac{h^2}8
\int_{c-\frac{h}2}^{c+\frac{h}2} |u''(t)|\mathrm d t,
\]
from where
\begin{eqnarray*}
|(Q_{h,\varepsilon}^{-1}\psi_h,u)-(\psi_h,u)| &= & \Big|\sum_{j=0}^{N-1}
\psi_j\Big(\int_{t_{j+\varepsilon}-\frac{h}2}^{t_{j+\varepsilon}+\frac{h}2}
u(t)\mathrm dt-h u(t_{j+\varepsilon})\Big)\Big|\\
&\le & \frac{h^2}8 \int_0^1 |\psi_h(t)|\, |u''(t)|\mathrm dt\le
\frac{h^2}8\|\psi_h\|_0 (2\pi)^2 \|u\|_2. 
\end{eqnarray*}
This proves (a). To prove (b) we proceed similarly, showing first that
\begin{equation}\label{eq:4.40}
|(Q_{h,\varepsilon}^{-1}\psi_h,u)-(\psi_h,u)|\le \pi h \|\psi_h\|_0\|u
\|_1\qquad \forall\psi_h \in S_{h,\varepsilon},\,u\in H^1,
\end{equation}
and then applying the inverse triangle inequality.
\end{proof}

\begin{lemma}\label{lemma:4.2}
There exists $C_\varepsilon$ such that
\[
|\{ Q_{h,\varepsilon}^{-1}\psi_h,\mathrm V_2Q_h^{-1} u\}|\le C_\varepsilon
\|\psi_h\|_0 (\|u\|_0+h\|u\|_1) \qquad \forall \psi_h \in S_{h,\varepsilon}, \,
u \in H^1.
\]
\end{lemma}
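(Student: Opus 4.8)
The goal is to bound the fully discrete weakly singular form $\{Q_{h,\varepsilon}^{-1}\psi_h,\mathrm V_2 Q_h^{-1}u\}$ for $\psi_h\in S_{h,\varepsilon}$ and $u\in H^1$, and the strategy is to write it as a perturbation of the continuous form $(\psi_h,\mathrm V_2 u_h)$, where $u_h$ is a suitable piecewise-constant representative of $u$, and then control each piece with tools already in the excerpt. Concretely, let $u_h:=\sum_{j=0}^{N-1}u(t_j)\chi_j\in S_h$ be the piecewise constant interpolant of $u$ at the grid $h\mathbb Z$; then $Q_h^{-1}u = Q_h^{-1}u_h$ (both depend only on the point values $u(t_j)$), so $\{Q_{h,\varepsilon}^{-1}\psi_h,\mathrm V_2 Q_h^{-1}u\}=\{Q_{h,\varepsilon}^{-1}\psi_h,\mathrm V_2 Q_h^{-1}u_h\}$. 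The plan is then to split
\[
\{Q_{h,\varepsilon}^{-1}\psi_h,\mathrm V_2 Q_h^{-1}u_h\}
= (\psi_h,\mathrm V_2 u_h)
+\Big(\{Q_{h,\varepsilon}^{-1}\psi_h,\mathrm V_2 Q_h^{-1}u_h\}-(\psi_h,\mathrm V_2 u_h)\Big),
\]
bound the first term by boundedness of $\mathrm V_2\colon H^0\to H^0$ together with a stability estimate $\|u_h\|_0\le C(\|u\|_0+h\|u\|_1)$ for the interpolant, and bound the bracketed consistency term by Proposition \ref{prop:3.2} and Lemma \ref{lemma:3.5}.

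**Key steps, in order.** First I would record the interpolation stability bound $\|u_h\|_0\le \|u\|_0 + \pi h\|u\|_1$ (or a constant times this), which is an elementary consequence of the midpoint-type estimate already used in the proof of Lemma \ref{lemma:3.7}: on each interval $(s_j,s_{j+1})$ one has $\int_{s_j}^{s_{j+1}}|u-u(t_j)|^2$ controlled by $h^2\int_{s_j}^{s_{j+1}}|u'|^2$ after a Poincaré/Taylor argument, and summing gives $\|u-u_h\|_0\le C h\|u\|_1$; combined with the triangle inequality this yields the stated bound. Second, $\mathrm V_2\in\mathcal E(-1)$ (stated in Section \ref{sec:2}) in particular maps $H^0\to H^0$ boundedly, so $|(\psi_h,\mathrm V_2 u_h)|\le \|\psi_h\|_0\|\mathrm V_2\|_{H^0\to H^0}\|u_h\|_0\le C(\|u\|_0+h\|u\|_1)\|\psi_h\|_0$. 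Third, by Proposition \ref{prop:3.2} (with $\varphi_h=u_h$) and the bound $|E^\varepsilon|_2\le C_\varepsilon h^{3/2}$ from Lemma \ref{lemma:3.5}, the consistency term is bounded by $h^{-1}|E^\varepsilon|_2\|\psi_h\|_0\|u_h\|_0\le C_\varepsilon h^{1/2}\|\psi_h\|_0\|u_h\|_0$, which is even smaller than what is claimed once $\|u_h\|_0$ is replaced by its bound. Collecting the three estimates and absorbing the $h^{1/2}$ (bounded by $1$ for $h\le 1$) gives the desired inequality with an $\varepsilon$-dependent constant.

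**Main obstacle.** None of the individual ingredients is hard; the one point requiring a little care is the identification $Q_h^{-1}u=Q_h^{-1}u_h$ together with the observation that the right-hand side of Proposition \ref{prop:3.2} only involves $u_h\in S_h$, so that the machinery built around the quadrature-error matrix $E^\varepsilon$ genuinely applies even though $u$ itself is not piecewise constant. The other mild subtlety is making sure the interpolation stability constant is absolute (independent of $\varepsilon$) and that the only $\varepsilon$-dependence enters through $C_\varepsilon$ in Lemma \ref{lemma:3.5}; since that lemma already carries the $\varepsilon$-dependence, this is automatic. I would also remark that the factor $h\|u\|_1$ on the right is not optimal from the consistency term alone (which gives $h^{1/2}$), but is forced by the bound on $\|u_h\|_0$, and $h\|u\|_1$ is exactly the form needed for the applications in Section \ref{sec:5}, so no sharpening is attempted here.
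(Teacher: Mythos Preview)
Your argument is correct and matches the paper's proof essentially step for step: introduce the midpoint interpolant $u_h\in S_h$, use $Q_h^{-1}u=Q_h^{-1}u_h$, split into $(\psi_h,\mathrm V_2 u_h)$ plus the quadrature discrepancy, bound the latter via Proposition~\ref{prop:3.2} and Lemma~\ref{lemma:3.5} (this is exactly the content of \eqref{eq:3.30}), and control $\|u_h\|_0$ by $\|u\|_0+Ch\|u\|_1$. The only cosmetic difference is that the paper quotes \eqref{eq:3.30} directly rather than re-deriving it from its two ingredients.
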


\begin{proof}
Let $u_h:=\sum_{j=0}^{N-1} u(t_j)\chi_j\in S_h$ be the midpoint interpolate of
$u$ onto $S_h$. A direct estimate shows that
\begin{equation}\label{eq:4.1}
\|u_h\|_0 \le \|u\|_0+\|u-u_h\|_0 \le \|u\|_0+\smallfrac{\pi h}{\sqrt2}\|u\|_1.
\end{equation}
On the other hand, since $Q_h^{-1} u=Q_h^{-1}u_h$, it follows from
\eqref{eq:3.30} that
\begin{eqnarray*}
|\{ Q_{h,\varepsilon}^{-1}\psi_h,\mathrm V_2Q_h^{-1} u\}| &=&|\{
Q_{h,\varepsilon}^{-1}\psi_h,\mathrm V_2Q_h^{-1} u_h\}|\le |(\psi_h,\mathrm V_2
u_h)|+ C_\varepsilon h^{1/2} \|\psi_h\|_0\|u_h\|_0\\
&\le& ( \|\mathrm V_2\|_{H^0\to H^0}+C_\varepsilon h^{1/2} )
\|\psi_h\|_0\|u_h\|_0.
\end{eqnarray*}
Applying \eqref{eq:4.1}, the result follows.
\end{proof}

\subsection{Discrete operators and expansions}

The truncation operator for the Fourier series
\[
F_h u:=\sum_{m \in \Lambda_N} \widehat u(m)\phi_m \qquad
\mbox{where}\,\Lambda_N:=\{m\,:\,-N/2< m\le N/2\}
\]
gives optimal approximation properties in all Sobolev norms \cite[Theorem
8.2.1]{SaVa:2002}
\begin{equation}\label{eq:4.2}
\| F_h u-u\|_s\le (\sqrt2 h)^{r-s}\|u\|_r \qquad r\ge s.
\end{equation}
We can also define a discretization operator onto $S_h$ based on matching the
central Fourier coefficients
\[
D_h u\in S_h \qquad \mbox{such that} \qquad \widehat{D_h u}(m)=\widehat u(m)
\quad \forall  m \in \Lambda_N.
\]
This operator is based on a class of spline-trigonometric projectors introduced
in \cite{Arnold:1983}. Here we will use it as introduced in \cite{DoSa:2001a}.
The following property
\begin{equation}\label{eq:4.3}
Q_{h,1/2}^{-1} F_h \mathrm D =\mathrm D \, D_h
\end{equation}
is a consequence of \cite[Lemma 5]{CeDoSa:2002}. 

Consider the 1-periodic functions $\underline B_\ell$ such that $(-1)^\ell\,
\ell! \underline B_\ell$ restricted to $(0,1)$ is equal to the Bernoulli
polynomial of degree $\ell$ for all $\ell$. Consider also $\underline
C_\ell:=\mathrm H \underline B_\ell$. By comparing their Fourier coefficients
\cite[Section 3]{CeDoSa:2002}, it is easy to prove that
\begin{equation}\label{eq:4.30}
\underline C_1(t)=-\smallfrac1{2\pi\imath}\log(4\sin^2(\pi t)) \qquad \mbox{and
therefore} \quad \underline C_1(\pm \smallfrac16)=0.
\end{equation}
Note that $\pm 1/6+\mathbb Z$ are the only zeros of $\underline C_1$.

\begin{proposition}\label{prop:4.3}
Let $a_1,a_2\in \mathcal D$ and
\[
\mathrm V:=\boldsymbol a_1\mathrm H\mathrm D_{-1}+\boldsymbol a_2\mathrm
H\mathrm D_{-2}+\mathrm K_3 \qquad \mbox{where $\mathrm K_3\in \mathcal E(-3)$}.
\]
Let then $\mathrm L_1:=\boldsymbol a_1$ and $\mathrm L_2:=\boldsymbol a_1\mathrm
D-\boldsymbol a_2$, and consider the operators
\begin{eqnarray*}
R_h u &:=& \mathrm V u-\mathrm V Q_h^{-1} F_h u + h \underline
C_1(\punto/h)\mathrm L_1 F_h u,\\
T_h u &:=& \mathrm V u-\mathrm V Q_{h,1/2}^{-1} F_h u + h \underline
C_1(\punto/h+1/2)\mathrm L_1 F_h u+h^2 \underline C_2(\punto/h+1/2)\mathrm L_2
F_h u.
\end{eqnarray*}
Then
\begin{eqnarray}\label{eq:4.31}
\|R_h u\|_0+h\| R_h u\|_1&\le & C h^2 \|u\|_1 \qquad \forall u \in H^1,\\
\|T_h u\|_0+h\| T_h u\|_1&\le & C h^3 \|u\|_2 \qquad \forall u \in H^2.
\label{eq:4.32}
\end{eqnarray}
\end{proposition}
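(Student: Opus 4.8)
The plan is to reduce the estimate by linearity to the three pieces $\boldsymbol a_1\mathrm H\mathrm D_{-1}$, $\boldsymbol a_2\mathrm H\mathrm D_{-2}$ and $\mathrm K_3$ of $\mathrm V$, and to write $u-Q_h^{-1}F_hu=(u-F_hu)+(F_hu-Q_h^{-1}F_hu)$ (and likewise with $Q_{h,1/2}^{-1}$). The contribution of $u-F_hu$ is immediate: $\mathrm V\in\mathcal E(-1)$ and \eqref{eq:4.2} give $\|\mathrm V(u-F_hu)\|_0\le C\|u-F_hu\|_{-1}$, $\|\mathrm V(u-F_hu)\|_1\le C\|u-F_hu\|_0$, which are $\mathcal O(h^2)\|u\|_1$ and $\mathcal O(h)\|u\|_1$ (resp. $\mathcal O(h^3)\|u\|_2$ and $\mathcal O(h^2)\|u\|_2$ when $u\in H^2$), exactly within the tolerance of the norm $\|\punto\|_0+h\|\punto\|_1$. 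So the work lies in analysing $\mathrm V(F_hu-Q_h^{-1}F_hu)$, where $w:=F_hu$ is a trigonometric polynomial with frequencies in $\Lambda_N$.

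For such a $w$, Dirac-comb quadrature aliases Fourier coefficients: $\widehat{Q_h^{-1}w}(l+pN)=\widehat w(l)$ for $l\in\Lambda_N$, $p\in\mathbb Z$, so $Q_h^{-1}w-w$ is supported on the frequencies $l+pN$, $p\neq0$ (for $Q_{h,1/2}^{-1}$ the same, with an extra factor $(-1)^p$). Applying $\mathrm H\mathrm D_{-n}$, expanding $(l+pN)^{-n}$ in powers of $l/(pN)$, and recognising the Fourier series of $\underline C_m(\punto/h)$ via the Bernoulli coefficients (cf. \eqref{eq:4.30} and \cite[Section~3]{CeDoSa:2002}) together with $e^{2\pi\imath pNt}=\phi_p(t/h)$, one obtains, for each $K$,
\[
\boldsymbol a_n\mathrm H\mathrm D_{-n}(Q_h^{-1}w-w)=\sum_{k=0}^{K}c_{n,k}\,h^{\,n+k}\,\boldsymbol a_n\,\underline C_{n+k}(\punto/h)\,\mathrm D^{k}w+\boldsymbol a_n\,\mathrm S^{(n,K)}w,\qquad c_{1,0}=1,
\]
with explicit constants $c_{n,k}$ and $\mathrm S^{(n,K)}w$ arising from the Taylor remainder of $(l+pN)^{-n}$; for $Q_{h,1/2}^{-1}$, each $\underline C_{n+k}(\punto/h)$ becomes $\underline C_{n+k}(\punto/h+\smallfrac12)$. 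Since multiplication operators commute, the $k=0$, $n=1$ term is exactly $h\,\underline C_1(\punto/h)\mathrm L_1F_hu$ ($\mathrm L_1=\boldsymbol a_1$), which cancels the correction in $R_h$; and for $T_h$ the $k=1$, $n=1$ term together with the $k=0$, $n=2$ term equals $h^2\underline C_2(\punto/h+\smallfrac12)\mathrm L_2F_hu$ ($\mathrm L_2=\boldsymbol a_1\mathrm D-\boldsymbol a_2$), cancelling the second correction. What is left to bound is thus: $\mathrm V(u-F_hu)$; for $R_h$ (taking $K=0$), the $\mathcal O(h^2)$ term $-h^2\boldsymbol a_2\underline C_2(\punto/h)F_hu$ and the remainders $\boldsymbol a_1\mathrm S^{(1,0)}F_hu$, $\boldsymbol a_2\mathrm S^{(2,0)}F_hu$; for $T_h$ (taking $K=1$ in the $n=1$ piece and $K=0$ in the $n=2$ piece), only $\boldsymbol a_1\mathrm S^{(1,1)}F_hu$ and $\boldsymbol a_2\mathrm S^{(2,0)}F_hu$; and in both cases $\mathrm K_3(u-Q_h^{-1}F_hu)$.

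All surviving terms are controlled from one inequality: if $v$ has frequencies in $\Lambda_N$ then, since the blocks $pN+\Lambda_N$ ($p\in\mathbb Z$) are pairwise disjoint,
\[
\|\underline C_\ell(\punto/h)\,v\|_s^2=\sum_{p\neq0}|\widehat{\underline C_\ell}(p)|^2\,\|\phi_{pN}v\|_s^2\le C\,h^{-2s}\,\|v\|_0^2\qquad\bigl(0\le s<\ell-\smallfrac12\bigr),
\]
and, similarly, $\|\mathrm S^{(n,K)}w\|_s\le C\,h^{\,n+K+1-s}\,\|w\|_{K+1}$ for $s<n+K+\smallfrac12$, the frequencies of $\mathrm S^{(n,K)}w$ being the disjoint sets $l+pN$. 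One always keeps $\underline C_\ell(\punto/h)$ (or $\underline C_\ell(\punto/h+\smallfrac12)$) applied to a band-limited $\mathrm D^{k}F_hu$, moving the smooth multipliers $\boldsymbol a_n\in\mathcal E(0)$ — bounded on every $H^s$ — to the outside. With $\|F_hu\|_r\le\|u\|_r$, each surviving term is then $\mathcal O(h^2)\|u\|_1$ (resp. $\mathcal O(h^3)\|u\|_2$) in $\|\punto\|_0+h\|\punto\|_1$; e.g. $\|\mathrm S^{(1,1)}F_hu\|_0\le Ch^3\|F_hu\|_2\le Ch^3\|u\|_2$ and $h\|\mathrm S^{(1,1)}F_hu\|_1\le Ch^3\|u\|_2$. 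Finally $\mathrm K_3(u-Q_h^{-1}F_hu)$ is bounded using $\mathrm K_3\in\mathcal E(-3)$, \eqref{eq:4.2} for $u-F_hu$, and the aliasing bound $\|F_hu-Q_h^{-1}F_hu\|_{-s}\le C\,h^{s}\|u\|_0$ (valid for $s>\smallfrac12$, again by disjointness), which yield $\|u-Q_h^{-1}F_hu\|_{-3}\le Ch^3\|u\|_1$, $\|u-Q_h^{-1}F_hu\|_{-2}\le Ch^2\|u\|_1$ (resp. with $\|u\|_2$), hence $\|\mathrm K_3(u-Q_h^{-1}F_hu)\|_0+h\|\mathrm K_3(u-Q_h^{-1}F_hu)\|_1=\mathcal O(h^3)$.

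The main obstacle is the $H^1$ bookkeeping. Since $\underline C_1$ has a logarithmic singularity, $\underline C_2(\punto/h)$ has derivative $-h^{-1}\underline C_1(\punto/h)$, which lies only in $L^2$ and not in $L^\infty$; a crude estimate $\|\underline C_\ell(\punto/h)v\|_1\lesssim\|\underline C_\ell(\punto/h)\|_{W^{1,\infty}}\|v\|_1$ loses at least a power $h^{-1/2}$ and is too weak. The whole argument hinges on keeping $\underline C_\ell(\punto/h)$ acting on a genuinely band-limited factor and exploiting the block-orthogonality to recover the sharp $h^{-s}$ scaling — this is the single place where the precise structure of the scheme (midpoint nodes on a uniform grid) enters decisively. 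A secondary, still delicate, point is carrying the Taylor remainder of $(l+pN)^{-n}$ uniformly in $l\in\Lambda_N$ and resumming it as a function of $t/h$ so that the disjoint-support property, and hence the Sobolev bound for $\mathrm S^{(n,K)}w$, survives; this is the technical substance of the asymptotic-expansion library developed in \cite{CeDoSa:2002,DoSa:2001,DoSa:2001a,DoRaSa:2008}.
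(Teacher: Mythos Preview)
Your sketch is correct and is essentially the argument the paper relies on: note that the paper does not prove Proposition~\ref{prop:4.3} from scratch but simply invokes \cite[Proposition~16]{CeDoSa:2002}, and what you have written is a faithful reconstruction of that result---aliasing of $Q_h^{-1}F_hu$ onto the blocks $l+pN$, Taylor expansion of $(l+pN)^{-n}$ producing the $\underline C_{n+k}(\punto/h)$ corrections (your constants $c_{1,0}=c_{1,1}=1$, $c_{2,0}=-1$ are exactly what make $\mathrm L_1=\boldsymbol a_1$ and $\mathrm L_2=\boldsymbol a_1\mathrm D-\boldsymbol a_2$ the right correctors), and the block-orthogonality Sobolev bound for the remainders. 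Your emphasis on keeping $\underline C_\ell(\punto/h)$ applied to a band-limited factor is the key technical point of the cited proof; there is nothing to add.
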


\begin{proof}
It is a direct consequence of \cite[Proposition 16]{CeDoSa:2002}.
\end{proof}

\begin{proposition}\label{prop:4.4}
Let $E_h u:= u -D_h u + h \underline B_1(\punto/h+1/2)F_h u'.$
Then
\[
\| E_hu\|_0+ h\|E_h u\|_1\le C h^2 \|u\|_2 \qquad \forall u \in H^2.
\]
\end{proposition}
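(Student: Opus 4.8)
The plan is to reduce the estimate for $E_h$ to the already-established estimate for the Fourier truncation operator in Proposition \ref{prop:4.3}, by recognizing that $D_h$ differs from a composition of $Q_{h,1/2}^{-1}$, $F_h$ and antidifferentiation only by terms that are controlled by \eqref{eq:4.3}. First I would recall the key identity $Q_{h,1/2}^{-1} F_h \mathrm D = \mathrm D\, D_h$ from \eqref{eq:4.3}, and rewrite everything in terms of a primitive of $u$. Concretely, set $v := \mathrm D_{-1} u$, so that $\mathrm D v = u - \widehat u(0)$ and $\widehat v(0)$ is whatever it is; then $D_h u$ and $u$ differ from applying an antidifferentiated version of the operator $R_h$ (with $\mathrm V$ replaced by the antidifferentiation $\mathrm D_{-1}$, i.e. $a_1 \equiv 1/(2\pi\imath)$ times a constant — more precisely one takes $\mathrm V = \mathrm D_{-1}$, which is of the form $\boldsymbol a_1 \mathrm H \mathrm D_{-1} + \ldots$? no: $\mathrm D_{-1} = \mathrm H^{-1}$-type object). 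The cleaner route: observe that $\mathrm D_{-1}$ is \emph{not} of the form in Proposition \ref{prop:4.3}, so instead I would directly compute Fourier coefficients.

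The core of the argument is a direct Fourier-coefficient computation. For $m \in \Lambda_N$ we have $\widehat{D_h u}(m) = \widehat u(m)$ by definition, so $\widehat{E_h u}(m) = h\,\widehat{\underline B_1(\cdot/h+1/2)F_h u'}(m)$ there, and for $m \notin \Lambda_N$ we have $\widehat{D_h u}(m)=0$ (since $D_h u \in S_h$ has spectrum supported in $\Lambda_N$... actually $S_h$ functions have all frequencies, so this needs care — $D_h u$ is piecewise constant, its Fourier coefficients at $m \notin \Lambda_N$ are determined by aliasing from $\Lambda_N$). The standard fact (from \cite{Arnold:1983, DoSa:2001a}) is that $\widehat{D_h u}(m) = \tau(mh)\widehat u(m - Nj)$-type aliasing formulas hold; rather than reprove these I would instead use the operator identity. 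The honest plan: apply $\mathrm D$ to $E_h u$. Using \eqref{eq:4.3}, $\mathrm D D_h u = Q_{h,1/2}^{-1}F_h \mathrm D u = Q_{h,1/2}^{-1} F_h u'$, and $\mathrm D\big(h\underline B_1(\cdot/h+1/2)F_h u'\big)$: since $\underline B_1' = 1 - d$ for the appropriate Dirac comb (the derivative of the periodized degree-one Bernoulli polynomial is $1$ minus a delta comb), one gets $\underline B_1(\cdot/h+1/2)' = \tfrac1h\big(1 - \text{comb}\big)$, producing $F_h u' - Q_{h,1/2}^{-1}F_h u'$ up to the comb term. Hence $\mathrm D E_h u = u' - \widehat{u'}(0) - Q_{h,1/2}^{-1}F_h u' + (F_h u' - Q_{h,1/2}^{-1}F_h u') + \ldots$ — I would match this against $R_h$ (with $\mathrm V = \mathrm V_{\mathrm{triv}}$, i.e. the identity-like operator) from Proposition \ref{prop:4.3}, or more simply against the special case of that proposition giving $\|u' - Q_{h,1/2}^{-1}F_h u' + h(\ldots)\|_0 \le Ch^2\|u'\|_1 \le Ch^2\|u\|_2$.

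Once $\|\mathrm D E_h u\|_0 \le Ch^2\|u\|_2$ is in hand — which gives the $h\|E_h u\|_1$ half of the claim since $\|E_h u\|_1 \approx \|\mathrm D E_h u\|_0 + |\widehat{E_h u}(0)|$ — it remains to control $\|E_h u\|_0$ and in particular $\widehat{E_h u}(0)$. For the zeroth coefficient: $\widehat{D_h u}(0) = \widehat u(0)$ since $0 \in \Lambda_N$, and $\widehat{\underline B_1}(0)=0$ (Bernoulli polynomial $B_1$ has zero mean over $(0,1)$), so $\widehat{E_h u}(0) = 0$ exactly. Then $\|E_h u\|_0 \le \tfrac1{2\pi}\|\mathrm D_1 E_h u\|_{0}$-type Poincaré inequality... more carefully $\|E_h u\|_0^2 = |\widehat{E_h u}(0)|^2 + \sum_{m\ne0}|\widehat{E_h u}(m)|^2 \le 0 + \sum_{m\ne 0}\tfrac{1}{(2\pi m)^2}\cdot|2\pi m|^2 |\widehat{E_h u}(m)|^2 = \tfrac1{(2\pi)^2}\|\mathrm D E_h u\|_0^2$, giving $\|E_h u\|_0 \le \tfrac1{2\pi}\|\mathrm D E_h u\|_0 \le Ch^2\|u\|_2$.

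The main obstacle I anticipate is bookkeeping the delta-comb term that arises when differentiating $\underline B_1(\cdot/h+1/2)F_h u'$: one must verify that it combines exactly with $Q_{h,1/2}^{-1}F_h u'$ (not merely up to lower order), using that the jump points of $\underline B_1(\cdot/h+1/2)$ are the staggered nodes $t_{j+1/2}$ and that $F_h u'$ is smooth so the comb acts by pointwise evaluation there — i.e. carefully matching the definition of $Q_{h,1/2}^{-1}$ in \eqref{eq:2.6}. Modulo that identification, everything reduces to the already-cited \cite[Proposition 16]{CeDoSa:2002} packaged as Proposition \ref{prop:4.3}, and the proof is short; indeed the cleanest writeup may simply invoke that $E_h = \mathrm D_{-1}\big(\text{a specialization of }T_h\text{ or }R_h\big) + (\text{mean-zero corrections})$ and quote \eqref{eq:4.31}.
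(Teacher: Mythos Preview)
Your differentiation approach is natural, and once the comb bookkeeping is carried out (and it does work out: the jumps of $\underline B_1(\punto/h+1/2)$ sit exactly on the $Q_{h,1/2}^{-1}$ grid) you obtain
\[
\mathrm D\, E_h u = (u'-F_h u') + h\,\underline B_1(\punto/h+1/2)\,F_h u''.
\]
However, the claimed bound $\|\mathrm D E_h u\|_0 \le C h^2\|u\|_2$ is \emph{false}, and neither Proposition~\ref{prop:4.3} nor \eqref{eq:4.31} delivers it: those results concern $\mathrm V u-\mathrm V Q_h^{-1}F_h u+\ldots$ for an operator $\mathrm V\in\mathcal E(-1)$, whereas here there is no smoothing operator in front. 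Take $u=\phi_1$: then $u'-F_hu'=0$ while $h\,\underline B_1(\punto/h+1/2)\,F_hu''$ has $H^0$ norm $h(2\pi)^2/\sqrt{12}$, so $\|\mathrm D E_h u\|_0/\|u\|_2$ is genuinely of order $h$. With only $\|\mathrm D E_hu\|_0\le Ch\|u\|_2$ in hand, the Poincar\'e step $\|E_hu\|_0\le\tfrac1{2\pi}\|\mathrm D E_hu\|_0$ gives $\|E_hu\|_0\le Ch\|u\|_2$, one power short. (The estimate $h\|E_hu\|_1\le Ch^2\|u\|_2$ \emph{does} follow, since for that half you only need $\|\mathrm D E_hu\|_0\le Ch\|u\|_2$.)

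The missing ingredient is a spectral gap: you have much more than $\widehat{E_hu}(0)=0$. By definition of $D_h$, and because the $h$-periodic factor $\underline B_1(\punto/h+1/2)$ has Fourier support on $N\mathbb Z$ (so its product with any trigonometric polynomial supported in $\Lambda_N$ has no Fourier coefficients inside $\Lambda_N$), one gets $\widehat{E_hu}(m)=0$ for \emph{every} $m\in\Lambda_N$. Only frequencies $|m|>N/2$ contribute, and the Poincar\'e constant improves to order $h$:
\[
\|E_hu\|_0^2 = \sum_{|m|>N/2}\frac{1}{(2\pi m)^2}|\widehat{\mathrm D E_hu}(m)|^2 \le \frac{h^2}{\pi^2}\|\mathrm D E_hu\|_0^2,
\]
hence $\|E_hu\|_0\le Ch^2\|u\|_2$. (This same observation also completes your justification of $\widehat{E_hu}(0)=0$: saying $\widehat{\underline B_1}(0)=0$ is not enough by itself, since you are multiplying by $F_hu'$.) With this repair your argument closes. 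The paper's own proof is not self-contained---it defers the $H^0$ bound to \cite[Proposition~1]{DoSa:2001} and the $H^1$ bound to the techniques of \cite[Proposition~16]{CeDoSa:2002}---so your direct route, once corrected, is a worthwhile alternative.
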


\begin{proof} The bound for $\|E_h u\|_0$ is given in \cite[Proposition
1]{DoSa:2001}. The $H^1$ bound can be obtained with similar arguments (see the
proof of \cite[Proposition 16]{CeDoSa:2002}).
\end{proof}

\subsection{Consistency error}

In the definition of the bilinear form \eqref{eq:3.1}, we only admitted discrete
arguments. In this section we will admit a continuous second argument. The
definition is equally valid.

\begin{proposition}\label{prop:4.5}
Let $\varphi_h$ be the solution of \eqref{eq:2.7} with right-hand side
$g=\mathrm W \varphi$. Then
\[
| w_h(\psi_h,\varphi_h-\varphi)+ h  \underline C_1(\varepsilon)
(\psi_h,\boldsymbol a\,\varphi)| \le C h^2\|\psi_h\|_0 \|\varphi\|_3 \qquad
\forall \psi_h \in S_{h,\varepsilon},
\]
where $a(s):=A_2(s,s)$.
\end{proposition}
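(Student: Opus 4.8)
The plan is to split the consistency error $w_h(\psi_h,\varphi_h-\varphi)$ using the fact that $\varphi_h$ solves \eqref{eq:2.7} with $g=\mathrm W\varphi$, so that
\[
w_h(\psi_h,\varphi_h-\varphi) = \{Q_{h,\varepsilon}^{-1}\psi_h,\mathrm W\varphi\} - w_h(\psi_h,\varphi)
= \{Q_{h,\varepsilon}^{-1}\psi_h,g\} - \{\psi_h',\mathrm V_1\varphi'\} - \{Q_{h,\varepsilon}^{-1}\psi_h,\mathrm V_2 Q_h^{-1}\varphi\}.
\]
(Here one has to be a little careful, since the second argument of $w_h$ is now a smooth $\varphi$ rather than an element of $S_h$, but as the paragraph before the statement notes, the bilinear form extends unambiguously.) Recalling from \eqref{eq:1.15}/\eqref{eq:1.1} that $\mathrm W\varphi = -\mathrm D\mathrm V_1\mathrm D\varphi + \mathrm V_2\varphi$ and that $\{Q_{h,\varepsilon}^{-1}\psi_h,\mathrm D v\} = -\{(Q_{h,\varepsilon}^{-1}\psi_h)',v\}$-type manipulations are available, I would organize the error into a \emph{principal part} coming from the $\mathrm V_1$ term and a \emph{weakly singular part} coming from the $\mathrm V_2$ term, and estimate each separately.

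For the weakly singular ($\mathrm V_2$) part, the relevant difference is $(\psi_h,\mathrm V_2\varphi) - \{Q_{h,\varepsilon}^{-1}\psi_h,\mathrm V_2 Q_h^{-1}\varphi\}$. Here I would introduce the asymptotic expansion of $\mathrm V_2$ from Proposition \ref{prop:2.2}, write $\mathrm V_2 = \boldsymbol a_1\mathrm H\mathrm D_{-1} + \dots$ with $a_1(s)=A_2(s,s)=a(s)$, and apply Proposition \ref{prop:4.3}: the operators $R_h$ and $T_h$ there precisely capture the leading quadrature defect of $\mathrm V$ against $Q_h^{-1}F_h$ and $Q_{h,1/2}^{-1}F_h$, and the leading correction term is $h\,\underline C_1(\cdot/h)\mathrm L_1 F_h u$ with $\mathrm L_1=\boldsymbol a_1$. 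Combining this with the interpolation/midpoint estimates of Lemma \ref{lemma:4.1} and Lemma \ref{lemma:4.2}, and using that $\underline C_1$ evaluated at the staggered nodes produces the constant $\underline C_1(\varepsilon)$ (this is where the specific $\varepsilon$-dependence enters, and where $\varepsilon=\pm1/6$ will later kill the term, cf. \eqref{eq:4.30}), the $\mathrm V_2$ contribution should match $-h\,\underline C_1(\varepsilon)(\psi_h,\boldsymbol a\,\varphi)$ up to the claimed $\mathcal O(h^2\|\psi_h\|_0\|\varphi\|_3)$ remainder.

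For the principal ($\mathrm V_1$) part, $\{\psi_h',\mathrm V_1\varphi'\}$, the point is that after one discrete antidifferentiation this is a \emph{smoothing} interaction — $\mathrm V_1\in\mathcal E(-1)$ composed with $\mathrm D$ on each side, effectively producing an $\mathcal E(1)$ operator applied to $\varphi$ — but crucially the delta-testing $\psi_h'$ against a smooth function $\mathrm V_1\varphi'$ (which is smooth away from the nodes of $\psi_h$) is \emph{exact}, not a quadrature. So the only error in this part comes from replacing $g=\mathrm W\varphi$'s $\mathrm V_1$-piece by the discrete bilinear form, and I expect that to produce a higher-order $\mathcal O(h^2)$ term directly, using the asymptotic expansion of $\mathrm W$ (Proposition \ref{prop:2.3}), the projector identity \eqref{eq:4.3}, and Proposition \ref{prop:4.4}. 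The main obstacle — and the place that will need the most care — is bookkeeping the leading-order terms: one must track that the $h\,\underline C_1$ corrections from $R_h$/$T_h$ assemble, after summation against $\psi_h$, into exactly the single clean term $h\,\underline C_1(\varepsilon)(\psi_h,\boldsymbol a\,\varphi)$ with the right sign and the right coefficient function $a(s)=A_2(s,s)$, and that all the cross terms (the $\mathrm L_2$ correction, the $\underline B_1$ correction from $E_h$, the $h\|u\|_1$ leftovers in Lemmas \ref{lemma:4.1}–\ref{lemma:4.2}) are genuinely $\mathcal O(h^2\|\psi_h\|_0\|\varphi\|_3)$; this is why the regularity index $3$ appears. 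Once the leading terms are isolated, the remainder estimates are routine applications of the propositions of this section.
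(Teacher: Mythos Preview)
Your strategy is essentially the paper's: write
\[
w_h(\psi_h,\varphi_h-\varphi) = (Q_{h,\varepsilon}^{-1}\psi_h,\mathrm W\varphi) - \{\psi_h',\mathrm V_1\varphi'\} - \{Q_{h,\varepsilon}^{-1}\psi_h,\mathrm V_2 Q_h^{-1}\varphi\},
\]
split into a $\mathrm V_1$-piece and a $\mathrm V_2$-piece, and extract the $-h\,\underline C_1(\varepsilon)(\psi_h,\boldsymbol a\,\varphi)$ term from the latter via the $R_h$ expansion of Proposition~\ref{prop:4.3}. The paper organizes this as $T_1+T_2+T_3$, with $T_2$ the Fourier truncation piece $\{Q_{h,\varepsilon}^{-1}\psi_h,\mathrm V_2 Q_h^{-1}(F_h\varphi-\varphi)\}$ (Lemma~\ref{lemma:4.2} and~\eqref{eq:4.2}) and $T_3$ the genuine $R_h$-piece; your plan recovers exactly this.

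One clarification, however: the $\mathrm V_1$-piece is simpler than you indicate. Since $(\psi_h',\mathrm V_1\varphi') = -(\psi_h,\mathrm D\mathrm V_1\mathrm D\varphi)$ by periodic integration by parts, that piece is exactly
\[
T_1 = (\psi_h - Q_{h,\varepsilon}^{-1}\psi_h,\mathrm D\mathrm V_1\mathrm D\varphi),
\]
a pure midpoint quadrature error against the smooth function $\mathrm D\mathrm V_1\mathrm D\varphi\in H^2$, bounded at once by Lemma~\ref{lemma:4.1}(a) as $\mathcal O(h^2\|\psi_h\|_0\|\varphi\|_3)$. None of the machinery you invoke for this part---the identity~\eqref{eq:4.3}, Proposition~\ref{prop:4.4}, the operator $T_h$, the $\underline B_1$ correction from $E_h$---is needed here; all of those belong to the companion Proposition~\ref{prop:4.6}, which treats $w_h(\psi_h,\varphi-D_h\varphi)$ and is where $D_h$ and $Q_{h,1/2}^{-1}$ actually enter. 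In the present proposition there is no $D_h$, so only $R_h$ (not $T_h$) from Proposition~\ref{prop:4.3} is relevant, and the ``assembling of corrections'' you anticipate is a single term coming from $T_3$ alone.
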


\begin{proof}
Note first that by definition of $\varphi_h$
\begin{eqnarray}\label{eq:4.4}
 w_h(\psi_h,\varphi_h-\varphi) &=& (Q_{h,\varepsilon}^{-1}\psi_h,\mathrm W
\varphi)-(\psi_h',\mathrm V_1\varphi')-\{ Q_{h,\varepsilon}^{-1}\psi_h ,\mathrm
V_2 Q_h^{-1} \varphi\}\\
\nonumber
&=& \underbrace{-(Q_{h,\varepsilon}^{-1}\psi_h,\mathrm D\mathrm V_1\mathrm D
\varphi)+(\psi_h,\mathrm D\mathrm V_1\mathrm D\varphi)}_{=:T_1}\\
\nonumber
& & + \underbrace{\{ Q_{h,\varepsilon}^{-1}\psi_h,\mathrm V_2Q_h^{-1}
(F_h\varphi-\varphi)\}}_{=:T_2}
+\underbrace{\{Q_{h,\varepsilon}^{-1}\psi_h,\mathrm V_2(\varphi-Q_h^{-1}
F_h\varphi)\}}_{=:T_3}.
\end{eqnarray}
In order to estimate $T_1$, we apply Lemma \ref{lemma:4.1}(a) with $u=\mathrm D
\mathrm V_1\mathrm D\varphi$ and note that $\mathrm D\mathrm V_1\mathrm D \in
\mathcal E(1)$, to obtain
\begin{equation}\label{eq:4.5}
|T_1|\le \smallfrac12 h^2\pi^2\|\psi_h\|_0 \|\mathrm D \mathrm V_1\mathrm
D\varphi\|_2\le C h^2 \|\psi_h\|_0 \|\varphi\|_3.
\end{equation}
To estimate $T_2$, we apply Lemma \ref{lemma:4.2} and the approximation
properties of $F_h$ \eqref{eq:4.2}, so that 
\begin{equation}\label{eq:4.6}
|T_2|\le \|\psi_h\|_0 (\| F_h\varphi-\varphi\|_0+ \pi h\|F_h
\varphi-\varphi\|_1) \le C h^2 \|\psi_h\|_0 \|\varphi\|_2.
\end{equation}
To bound $T_3$ we will apply Proposition \ref{prop:4.3} to the operator $\mathrm
V_2$ (see Proposition \ref{prop:2.2}).  
It is simple to verify that
\[
\{ Q_{h,\varepsilon}^{-1}\psi_h, \underline C_1(\punto/h) u\}=\underline
C_1(\varepsilon)\, (Q_{h,\varepsilon}^{-1}\psi_h,u) \qquad \forall \psi_h \in
S_{h,\varepsilon}.
\]
Then, by Proposition \ref{prop:4.3}, and denoting $a(s):=A_2(s,s)$,
\begin{eqnarray*}
T_3 &=&-h \underline C_1(\varepsilon) (Q_{h,\varepsilon}^{-1}\psi_h,\boldsymbol
a F_h \varphi)+(Q_{h,\varepsilon}^{-1}\psi_h,R_h \varphi)\\
&=& -h\underline C_1(\varepsilon) (\psi_h,\boldsymbol a\varphi)\\
& & + \underbrace{h\underline C_1(\varepsilon)
(\psi_h-Q_{h,\varepsilon}^{-1}\psi_h,\boldsymbol a\varphi)}_{=:T_{31}}
+\underbrace{ h\underline C_1(\varepsilon)
(Q_{h,\varepsilon}^{-1}\psi_h,\boldsymbol a
(\varphi-F_h\varphi))}_{=:T_{32}}+\underbrace{(Q_{h,\varepsilon}^{-1}\psi_h,R_h
\varphi)}_{=:T_{33}}. 
\end{eqnarray*}
Applying Lemma \ref{lemma:4.1}(a) with $u= \boldsymbol a\,\varphi$ we can easily
bound
\begin{equation}\label{eq:4.7}
|T_{31}| \le C h^3 \|\psi_h\|_0\| \varphi\|_2,
\end{equation}
while Lemma \ref{lemma:4.1}(b) applied to $u=\boldsymbol a(\varphi-F_h\varphi)$
yields
\begin{equation}\label{eq:4.8}
|T_{32}| \le h \|\psi_h\|_0 (\| \boldsymbol a(\varphi-F_h\varphi)\|_0+
h\|\boldsymbol a(\varphi-F_h\varphi)\|_1) \le C h^3 \|\psi_h\|_0 \| \varphi\|_2.
\end{equation}
Finally, we apply Lemma \ref{lemma:4.1}(b) again, using the bound for
$R_h\varphi$ provided by \eqref{eq:4.31}, which yields
\begin{equation}\label{eq:4.9}
|T_{33}|\le \|\psi_h\|_0 (\|R_h\varphi\|_0+\pi h\| R_h \varphi\|_1) \le C h^2
\|\psi_h\|_0\|\varphi\|_1.
\end{equation}
Inequalities \eqref{eq:4.7}-\eqref{eq:4.9} imply that
\begin{equation}\label{eq:4.10}
|T_3+h \underline C_1(\varepsilon) (\psi_h,\boldsymbol a \varphi)| \le C h^2
\|\psi_h\|_0 \|\varphi\|_2.
\end{equation}
Carrying \eqref{eq:4.5}, \eqref{eq:4.6}, and \eqref{eq:4.10} to \eqref{eq:4.4}
the result follows.
\end{proof}

\begin{proposition}\label{prop:4.6}
Let $\alpha$ be the constant in \eqref{eq:1.0b}. Then
\[
| w_h(\psi_h,\varphi-D_h\varphi)+ h\,\alpha\,\underline C_1(\varepsilon)
(\psi_h',\varphi')|\le C h^2 \|\psi_h\|_0\|\varphi\|_3 \qquad \forall \psi_h \in
S_{h,\varepsilon}.
\]
\end{proposition}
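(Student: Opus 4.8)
The plan is to reduce this statement to Proposition \ref{prop:4.5} by accounting for the difference between the continuous solution $\varphi$ and its spline-trigonometric projection $D_h\varphi$. Write
\[
\varphi - D_h\varphi = (\varphi - \varphi_h) + (\varphi_h - D_h\varphi),
\]
where $\varphi_h$ solves \eqref{eq:2.7} with $g = \mathrm W\varphi$. By bilinearity, $w_h(\psi_h, \varphi - D_h\varphi) = w_h(\psi_h, \varphi - \varphi_h) + w_h(\psi_h, \varphi_h - D_h\varphi)$. The first term is controlled by Proposition \ref{prop:4.5} (up to sign): it equals $-h\underline C_1(\varepsilon)(\psi_h, \boldsymbol a\varphi)$ plus an $\mathcal O(h^2\|\psi_h\|_0\|\varphi\|_3)$ remainder. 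So the real work is to show that $w_h(\psi_h, \varphi_h - D_h\varphi)$ equals $-h\,\alpha\,\underline C_1(\varepsilon)(\psi_h', \varphi') + h\underline C_1(\varepsilon)(\psi_h, \boldsymbol a\varphi)$ up to $\mathcal O(h^2\|\psi_h\|_0\|\varphi\|_3)$, since the two $+h\underline C_1(\varepsilon)(\psi_h,\boldsymbol a\varphi)$ contributions would then cancel and leave the claimed leading term.

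To analyze $w_h(\psi_h, \varphi_h - D_h\varphi)$, note that both $\varphi_h$ and $D_h\varphi$ lie in $S_h$, so this is a purely discrete pairing and I can use the representation of $w_h^\circ$ from Lemma \ref{lemma:3.9} together with the perturbation bound \eqref{eq:3.30} controlling $w_h - w_h^\circ$. The cleaner route, though, is to exploit the expansion $E_h\varphi := \varphi - D_h\varphi + h\underline B_1(\punto/h + 1/2)F_h\varphi'$ of Proposition \ref{prop:4.4}, which gives $\|E_h\varphi\|_0 + h\|E_h\varphi\|_1 \le Ch^2\|\varphi\|_2$. Thus $\varphi - D_h\varphi = -h\underline B_1(\punto/h+1/2)F_h\varphi' + E_h\varphi$, and I would split $w_h(\psi_h, \varphi - D_h\varphi)$ accordingly. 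The $E_h\varphi$ part is a higher-order remainder: the principal part $\{\psi_h', \mathrm V_1(\cdot)'\}$ is problematic because $E_h\varphi$ need not be discrete, but since $\mathrm V_1 \in \mathcal E(-1)$ and $\mathrm D \in \mathcal E(1)$, one can move derivatives around (using $\mathrm D_{-1}^* = -\mathrm D_{-1}$ as in Lemma \ref{lemma:3.9}) to land on $\{\varphi_h', \mathrm A^*\psi_h\}$-type expressions and bound everything by $\|\psi_h\|_0 (\|E_h\varphi\|_0 + h\|E_h\varphi\|_1) \le Ch^2\|\psi_h\|_0\|\varphi\|_2$, using that $\mathrm A^* = -\alpha\mathrm H + \mathrm K^*$ with $\mathrm K^* \in \mathcal E(-1)$ maps $H^0 \to H^1$ boundedly; the $\mathrm V_2$ part is handled with Lemma \ref{lemma:4.2}.

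The remaining main term is $-h\, w_h\big(\psi_h,\, \underline B_1(\punto/h+1/2)F_h\varphi'\big)$. Here the relation \eqref{eq:4.3}, $Q_{h,1/2}^{-1}F_h\mathrm D = \mathrm D\,D_h$, and the identity $\underline C_\ell = \mathrm H\underline B_\ell$, plus the fact that $\underline B_1(\punto/h+1/2)F_h\varphi'$ is essentially $\mathrm D$ applied to something (so its contribution to the principal part $\{\psi_h',\mathrm V_1(\cdot)'\}$ picks out the leading symbol $-\alpha\mathrm H$ of $\mathrm W$) should produce $-h\,\alpha\,\underline C_1(\varepsilon)(\psi_h',\varphi')$ as the leading behavior, with the $\mathrm V_2$ piece contributing $+h\underline C_1(\varepsilon)(\psi_h,\boldsymbol a\varphi)$ to match and cancel. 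I expect the main obstacle to be this last computation: carefully tracking which terms in the asymptotic expansions of $\mathrm V_1$, $\mathrm V_2$ (Proposition \ref{prop:2.2}) and of the discrete operators survive at order $h$, verifying that the $\underline C_1(\varepsilon)$ factors emerge correctly from pairing $Q_{h,\varepsilon}^{-1}\psi_h$ against $\underline C_1(\punto/h)$-type oscillatory functions (as in the identity $\{Q_{h,\varepsilon}^{-1}\psi_h, \underline C_1(\punto/h)u\} = \underline C_1(\varepsilon)(Q_{h,\varepsilon}^{-1}\psi_h, u)$ used in Proposition \ref{prop:4.5}), and confirming the two $h\underline C_1(\varepsilon)(\psi_h,\boldsymbol a\varphi)$ terms cancel exactly rather than leaving an $\mathcal O(h)$ residue. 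Finally I would collect all pieces, observe the cancellation, and conclude the stated $\mathcal O(h^2\|\psi_h\|_0\|\varphi\|_3)$ bound.
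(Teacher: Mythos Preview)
Your proposal has a real gap in the ``cleaner route'' via the $E_h$ decomposition. You claim that $\{\psi_h',\mathrm V_1(E_h\varphi)'\}$ can be bounded by $\|\psi_h\|_0(\|E_h\varphi\|_0+h\|E_h\varphi\|_1)$, but the natural estimate for this pairing (integrating by parts and using $\mathrm D\mathrm V_1\mathrm D\in\mathcal E(1)$) only gives $C\|\psi_h\|_0\|E_h\varphi\|_1\le C'h\|\psi_h\|_0\|\varphi\|_2$, which is $O(h)$, not $O(h^2)$. Concretely, $(E_h\varphi)'=(\varphi'-F_h\varphi')+h\,\underline B_1(\punto/h+1/2)F_h\varphi''$, and while the first piece contributes $O(h^2)$ after applying $\mathrm V_1$ and pairing with $\psi_h'$, the second piece gives $h\{\psi_h',\mathrm V_1(\underline B_1(\punto/h+1/2)F_h\varphi'')\}$, which is only $O(h)$. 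This $O(h)$ term does not vanish on its own: it cancels exactly against a matching $O(h)$ piece hidden in your ``main term'' $-h\{\psi_h',\mathrm V_1(\underline B_1(\punto/h+1/2)F_h\varphi')'\}$, but you never compute that derivative (which involves $\underline B_1'=-1+\sum\delta_k$) and never identify the cancellation. Your second expectation, that the $\mathrm V_2$ piece produces a term $+h\underline C_1(\varepsilon)(\psi_h,\boldsymbol a\varphi)$, is also incorrect: since $\underline B_1(1/2)=0$, one has $Q_h^{-1}(\underline B_1(\punto/h+1/2)F_h\varphi')=0$, so the entire $\mathrm V_2$ contribution to $w_h(\psi_h,\varphi-D_h\varphi)$ is already $O(h^2)$ with no $O(h)$ leading part.

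The paper bypasses all of this by not splitting $\varphi-D_h\varphi$ at all for the principal part. The key observation you allude to but never exploit directly is \eqref{eq:4.3}: it gives $(\varphi-D_h\varphi)'=\varphi'-Q_{h,1/2}^{-1}F_h\varphi'$, which is \emph{exactly} the input to the $T_h$ expansion of Proposition \ref{prop:4.3} with $u=\varphi'$. Pairing $\psi_h'$ against the oscillatory terms then uses the simple identity $\{\psi_h',\underline C_\ell(\punto/h+1/2)u\}=\underline C_\ell(\varepsilon)(\psi_h',u)$, and the leading term $-h\alpha\underline C_1(\varepsilon)(\psi_h',\varphi')$ falls out immediately, with all remainder terms manifestly $O(h^2\|\psi_h\|_0\|\varphi\|_3)$. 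For the $\mathrm V_2$ part the paper \emph{does} use the $E_h$ expansion together with $\underline B_1(1/2)=0$, but only there, where $Q_h^{-1}$ kills the oscillatory factor and Lemma \ref{lemma:4.2} controls the rest. The initial detour through $\varphi_h$ that you sketch first is unnecessary and effectively circular (it would amount to proving Corollary \ref{cor:4.7} in order to deduce one of its two ingredients).
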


\begin{proof}
Using \eqref{eq:4.3}, we can write
\begin{equation}\label{eq:4.11}
w_h(\psi_h,\varphi-D_h\varphi) = \underbrace{\{\psi_h',\mathrm
V_1(\varphi'-Q_{h,1/2}^{-1}F_h
\varphi')\}}_{=:T_4}+\underbrace{\{Q_{h,\varepsilon}^{-1}\psi_h,
\mathrm V_2Q_h^{-1}(\varphi-D_h\varphi)\}}_{=:T_5}.
\end{equation}
To estimate $T_4$ we will use Proposition \ref{prop:4.3} applied to $\mathrm
V_1$ (see Proposition \ref{prop:2.2}). An easy computation shows that
\begin{equation}\label{eq:4.12}
\{\psi_h',\underline C_\ell(\punto/h+1/2)\,u\}=\underline C_\ell(\varepsilon)
(\psi_h',u) \qquad \forall \psi_h \in S_{h,\varepsilon}.
\end{equation}
Let $\mathrm L_1$ and $\mathrm L_2$ be the differential operators associated to
the expansion of $\mathrm V_1$ in Proposition \ref{prop:4.3} (note that $\mathrm
L_1 u=\alpha u$). By \eqref{eq:4.12} and Proposition \ref{prop:4.3} it follows
that
\begin{eqnarray}\nonumber
T_4 &=& - h \underline C_1(\varepsilon)(\psi_h,\mathrm L_1\varphi')\\
& & + \underbrace{h\underline C_1(\varepsilon) (\psi_h',\mathrm
L_1(\varphi'-F_h\varphi'))}_{=:T_{41}}-\underbrace{h^2\underline
C_2(\varepsilon) (\psi_h',\mathrm L_2F_h
\varphi')}_{=:T_{42}}+\underbrace{(\psi_h',T_h\varphi')}_{=:T_{43}}.
\label{eq:4.13}
\end{eqnarray}
Using \eqref{eq:4.2} we can easily bound
\begin{equation}\label{eq:4.14}
|T_{41}|=|h \underline C_1(\varepsilon) (\psi_h,\mathrm D \mathrm
L_1(\varphi'-F_h\varphi')|\le C h \|\psi_h\|_0 \|\varphi'-F_h\varphi'\|_1 \le C'
h^2 \|\psi_h\|_0\|\varphi\|_3
\end{equation}
and
\begin{equation}\label{eq:4.15}
|T_{42}|=|h^2 \underline C_2(\varepsilon) (\psi_h,\mathrm D \mathrm L_2
F_h\varphi')|\le C h^2 \|\psi_h\|_0 \|\varphi\|_3.
\end{equation}
Similarly, using the bound for $T_h$ given by \eqref{eq:4.32}, we estimate
\begin{equation}\label{eq:4.16}
|T_{43}| \le \|\psi_h\|_0 \| \mathrm D T_h \varphi'\|_0 \le C h^2
\|\psi_h\|_0\|\varphi\|_3.
\end{equation}
Taking \eqref{eq:4.14}-\eqref{eq:4.16} to \eqref{eq:4.13} we have proved that
\begin{equation}\label{eq:4.17}
| T_4+ h\alpha\, \underline C_1(\varepsilon) (\psi_h',\varphi')|\le C
h^2\|\psi_h\|_0 \|\varphi\|_3. 
\end{equation}
We next estimate the term $T_5$ in \eqref{eq:4.11}. Note that
\begin{equation}\label{eq:4.50}
Q_h^{-1} (\underline B_1(\punto/h+1/2) u)=\underline B_1(1/2) Q_h^{-1} u=0,
\end{equation}
by the coincidence of $\underline B_1$ with the Bernoulli polynomial of first
degree in $(0,1)$. Therefore, using Proposition \ref{prop:4.4} and Lemma
\ref{lemma:4.2}(b) it follows that
\begin{equation}\label{eq:4.18}
|T_5| =|\{ Q_{h,\varepsilon}^{-1} \psi_h,\mathrm V_2 Q_h^{-1} E_h\varphi\}| \le
C_\varepsilon \|\psi_h\|_0 (\|E_h\varphi\|_0+ h \|E_h \varphi \|_1) \le
C_\varepsilon' h^2 \|\psi_h\|_0 \|\varphi\|_2.
\end{equation}
The collection of \eqref{eq:4.11}, \eqref{eq:4.17} and \eqref{eq:4.18} proves
the result.
\end{proof}

\begin{corollary}\label{cor:4.7}
Let $\varphi_h$ be the solution of \eqref{eq:2.7} with right-hand side
$g=\mathrm W \varphi$. Then
\begin{equation}
| w_h(\psi_h,\varphi_h-D_h\varphi)+ h  \underline C_1(\varepsilon)
(\psi_h,\boldsymbol a\,\varphi-\alpha \varphi'')| \le C h^2\|\psi_h\|_0
\|\varphi\|_3 \qquad \forall \psi_h \in S_{h,\varepsilon},
\end{equation}
and
\begin{equation}\label{eq:4.41}
| w_h(\psi_h,\varphi_h-D_h\varphi)+ h  \underline C_1(\varepsilon)
(Q_{h,\varepsilon}^{-1}\psi_h,\boldsymbol a\,\varphi-\alpha \varphi'')| \le C
h^2\|\psi_h\|_0 \|\varphi\|_3 \qquad \forall \psi_h \in S_{h,\varepsilon},
\end{equation}
where $a(s)=A_2(s,s)$ and $\alpha$ is the constant in \eqref{eq:1.0b}.
\end{corollary}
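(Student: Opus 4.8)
The plan is to derive both inequalities directly by adding the two consistency estimates already proved, after one integration by parts, and then (for the second inequality) adjusting the test function in the leading low‑order term by means of a quadrature estimate. Since $w_h(\psi_h,\punto)$ is linear in its second argument (and, as noted just before Proposition \ref{prop:4.5}, a continuous second argument is admissible), I would first split
\[
w_h(\psi_h,\varphi_h-D_h\varphi)=w_h(\psi_h,\varphi_h-\varphi)+w_h(\psi_h,\varphi-D_h\varphi).
\]
Adding the bound of Proposition \ref{prop:4.5} (applied to the first piece) and that of Proposition \ref{prop:4.6} (applied to the second) yields
\[
\bigl| w_h(\psi_h,\varphi_h-D_h\varphi)+ h\underline C_1(\varepsilon)(\psi_h,\boldsymbol a\,\varphi)+h\,\alpha\,\underline C_1(\varepsilon)(\psi_h',\varphi')\bigr|\le 2Ch^2\|\psi_h\|_0\|\varphi\|_3 .
\]

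Next I would rewrite the last correction term. Because the bilinear form $(\punto,\punto)$ extends to the duality pairing $H^{-1}\times H^1$ and $\mathrm D^{*}=-\mathrm D$ with respect to it (adjoints are taken without conjugation, cf. the end of Section \ref{sec:2.3}), and since $\psi_h'\in H^{-1}$, $\varphi'\in H^{2}\subset H^{1}$ and $\varphi''\in H^{1}\subset H^{0}$, one has $(\psi_h',\varphi')=-(\psi_h,\varphi'')$. Consequently
\[
h\underline C_1(\varepsilon)(\psi_h,\boldsymbol a\,\varphi)+h\,\alpha\,\underline C_1(\varepsilon)(\psi_h',\varphi')
= h\underline C_1(\varepsilon)\,(\psi_h,\boldsymbol a\,\varphi-\alpha\,\varphi''),
\]
which is exactly the correction term in the first claimed inequality; up to renaming the constant, this proves it.

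For the second inequality I would replace $\psi_h$ by $Q_{h,\varepsilon}^{-1}\psi_h$ in the leading term: the discrepancy is $h\underline C_1(\varepsilon)\bigl(Q_{h,\varepsilon}^{-1}\psi_h-\psi_h,\;\boldsymbol a\,\varphi-\alpha\,\varphi''\bigr)$, and with $v:=\boldsymbol a\,\varphi-\alpha\,\varphi''\in H^{1}$ (note $\|v\|_1\le C\|\varphi\|_3$) the estimate \eqref{eq:4.40} from the proof of Lemma \ref{lemma:4.1} gives $|(Q_{h,\varepsilon}^{-1}\psi_h-\psi_h,v)|\le \pi h\|\psi_h\|_0\|v\|_1$; the extra factor $h$ in front turns this into $\mathcal O(h^{2}\|\psi_h\|_0\|\varphi\|_3)$, and the triangle inequality with the first inequality completes the proof. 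There is no genuinely hard step here; the only point needing attention is that one cannot invoke the sharper $\mathcal O(h^2)$ quadrature bound of Lemma \ref{lemma:4.1}(a) for this swap, since under the hypothesis $\varphi\in H^{3}$ the function $\varphi''$ lies only in $H^{1}$, so the $\mathcal O(h)$ bound \eqref{eq:4.40} must be used — which is still enough because of the leading factor $h$.
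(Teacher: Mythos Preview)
Your proof is correct and follows exactly the approach sketched in the paper: combine Propositions \ref{prop:4.5} and \ref{prop:4.6} via the split $w_h(\psi_h,\varphi_h-D_h\varphi)=w_h(\psi_h,\varphi_h-\varphi)+w_h(\psi_h,\varphi-D_h\varphi)$ (using $(\psi_h',\varphi')=-(\psi_h,\varphi'')$ to merge the correction terms) for the first bound, and then pass to the second bound by swapping $\psi_h$ for $Q_{h,\varepsilon}^{-1}\psi_h$ in the leading term with the $\mathcal O(h)$ quadrature estimate \eqref{eq:4.40}. Your remark that only \eqref{eq:4.40} (not Lemma \ref{lemma:4.1}(a)) is available here because $\varphi''\in H^1$ is a nice observation, and the extra factor $h$ indeed makes it sufficient.
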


\begin{proof} The first bound is a straightforward consequence of Propositions
\ref{prop:4.5} and \ref{prop:4.6}. The bound \eqref{eq:4.41}  can be derived
from the first and \eqref{eq:4.40}, although it has already been implicitly
given in the proofs above.
\end{proof}

\begin{proposition}[Zero order asymptotics]
The following reduced estimate holds:
\begin{equation}\label{eq:4.42}
| w_h(\psi_h,\varphi_h-D_h\varphi)|\le C h \|\psi_h\|_0 \|\varphi\|_2 \qquad
\forall \psi_h \in S_{h,\varepsilon}.
\end{equation}
\end{proposition}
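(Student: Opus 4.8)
The plan is to reduce this lower-regularity estimate to the sharp consistency bound of Corollary~\ref{cor:4.7} by splitting $\varphi$ with the Fourier truncation operator. I would write $\varphi=F_h\varphi+\varphi^\perp$ with $\varphi^\perp:=\varphi-F_h\varphi$, and use that, for fixed $\psi_h$ and $h\le h_0$, the map $\varphi\mapsto w_h(\psi_h,\varphi_h-D_h\varphi)$ is linear (the solution operator of \eqref{eq:2.7} is linear in its datum by Theorem~\ref{the:3.1}, and so is $D_h$). Thus it suffices to estimate the two contributions separately. The whole point of the splitting is that $F_h\varphi$ is a trigonometric polynomial whose nonzero modes obey $|m|\le N/2$, hence smooth, so Corollary~\ref{cor:4.7} applies to it with a quantity we can afford, whereas $\varphi^\perp$ carries only high Fourier modes and is annihilated by $D_h$.

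For the smooth part, let $\varphi_h^F$ solve \eqref{eq:2.7} with datum $\mathrm W F_h\varphi$. Since $D_h$ depends only on the Fourier coefficients with indices in $\Lambda_N$, one has $D_hF_h\varphi=D_h\varphi$; applying Corollary~\ref{cor:4.7} with $F_h\varphi$ in place of $\varphi$ gives
\[
|w_h(\psi_h,\varphi_h^F-D_h\varphi)|\le h\,|\underline C_1(\varepsilon)|\,\|\psi_h\|_0\,\|\boldsymbol a\,F_h\varphi-\alpha(F_h\varphi)''\|_0+C h^2\|\psi_h\|_0\|F_h\varphi\|_3.
\]
The first term is at most $C h\|\psi_h\|_0\|F_h\varphi\|_2\le C h\|\psi_h\|_0\|\varphi\|_2$. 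For the second term I would invoke the inverse inequality $\|F_h\varphi\|_3\le\smallfrac{1}{2h}\|F_h\varphi\|_2\le\smallfrac{1}{2h}\|\varphi\|_2$, valid because $|m|\le N/2=1/(2h)$ on the frequencies of $F_h\varphi$; this also turns it into $Ch\|\psi_h\|_0\|\varphi\|_2$, so $|w_h(\psi_h,\varphi_h^F-D_h\varphi)|\le C h\|\psi_h\|_0\|\varphi\|_2$.

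For the high-frequency part, the key observation is that $D_h\varphi^\perp=0$, since every Fourier coefficient of $\varphi^\perp$ with index in $\Lambda_N$ vanishes and $D_h$ sees only those. Hence, writing $\varphi_h^\perp$ for the solution of \eqref{eq:2.7} with datum $\mathrm W\varphi^\perp$ (well defined for $h\le h_0$, since $\mathrm W\in\mathcal E(1)$ gives $\mathrm W\varphi^\perp\in H^1\hookrightarrow\mathcal C^0$), the very definition of $\varphi_h^\perp$ yields $w_h(\psi_h,\varphi_h^\perp-D_h\varphi^\perp)=w_h(\psi_h,\varphi_h^\perp)=(Q_{h,\varepsilon}^{-1}\psi_h,\mathrm W\varphi^\perp)$. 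Using $\mathrm W\in\mathcal E(1)$ together with $\|\varphi^\perp\|_1\le\sqrt2\,h\|\varphi\|_2$ and $\|\varphi^\perp\|_2\le\|\varphi\|_2$ from \eqref{eq:4.2}, Lemma~\ref{lemma:4.1}(b) gives
\[
|(Q_{h,\varepsilon}^{-1}\psi_h,\mathrm W\varphi^\perp)|\le\|\psi_h\|_0\big(\|\mathrm W\varphi^\perp\|_0+\pi h\|\mathrm W\varphi^\perp\|_1\big)\le C h\|\psi_h\|_0\|\varphi\|_2.
\]
Adding the two contributions, using $\varphi_h=\varphi_h^F+\varphi_h^\perp$ and $D_h\varphi=D_hF_h\varphi$, proves \eqref{eq:4.42}.

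I do not expect a genuine obstacle here: the estimate is essentially a bookkeeping consequence of Corollary~\ref{cor:4.7}. The two points that need care are, first, the uniformity in $h$ of the constant delivered by Corollary~\ref{cor:4.7} (it depends only on $\varepsilon$ and the kernels $A_\ell,K_\ell$), so that multiplying the $h^2\|F_h\varphi\|_3$ term by $h^{-1}$ through the inverse inequality still leaves an $\mathcal O(h)$ bound; and second, the identity $D_h(\varphi-F_h\varphi)=0$, which is what makes the high-frequency contribution collapse to a plain quadrature estimate rather than demanding an extra derivative. This reduced $H^2$ estimate is exactly the ingredient needed to run the Strang-type argument behind the first-order convergence theorem.
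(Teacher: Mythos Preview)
Your proof is correct and takes a genuinely different route from the paper's. The paper revisits the decomposition $w_h(\psi_h,\varphi_h-D_h\varphi)=T_1+T_2+T_3+T_4+T_5$ from the proofs of Propositions~\ref{prop:4.5} and \ref{prop:4.6} and re-estimates each term so that only $\|\varphi\|_2$ appears on the right: for $T_2,T_3,T_5$ the existing bounds already suffice, for $T_1$ one uses \eqref{eq:4.40} in place of Lemma~\ref{lemma:4.1}(a), and for $T_4$ one drops a term in the expansion and invokes a coarser bound for the resulting remainder. Your approach instead treats Corollary~\ref{cor:4.7} as a black box and bootstraps via the Fourier splitting $\varphi=F_h\varphi+\varphi^\perp$, trading the extra derivative in $\|F_h\varphi\|_3$ for a factor $h^{-1}$ through the Bernstein-type inverse inequality, and collapsing the high-frequency piece through $D_h\varphi^\perp=0$. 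Your argument is shorter and avoids reopening the term-by-term analysis; the paper's argument, by contrast, makes the dependence on the individual ingredients transparent and does not rely on the discrete problem being uniquely solvable (your linearity step implicitly uses $h\le h_0$, though this is harmless since $\varphi_h$ is assumed to exist).
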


\begin{proof} If we go back to the notation of the proofs of Propositions
\ref{prop:4.5} and \ref{prop:4.6}, it is clear from \eqref{eq:4.6},
\eqref{eq:4.10}, and \eqref{eq:4.18} that
\[
|T_2|+|T_3|+|T_5|\le C h \|\psi_h\|_0 \|\varphi\|_2 \qquad \forall \psi_h \in
S_{h,\varepsilon}^{-1}.
\]
Using \eqref{eq:4.40} instead of Lemma \ref{lemma:4.1}(a), it is also simple to
bound
\[
|T_1| \le C h \|\psi_h\|_0 \|\varphi\|_2 \qquad \forall \psi_h \in
S_{h,\varepsilon}^{-1}.
\]
For the operator $T_h^\circ u:=\mathrm V u-\mathrm V Q_{h,1/2}^{-1} F_h u + h
\underline C_1(\punto/h+1/2) \mathrm L_1 F_h u$, we can bound
$\|T_h^\circ u\|_0+ h \|T_h^\circ u\|_1 \le C h^2 \|u\|_1$ \cite[Proposition
16]{CeDoSa:2002}. Using this bound instead of Proposition \ref{prop:4.3}, we can
prove that
\[
|T_4|\le C h \|\psi_h\|_0 \|\varphi\|_2 \qquad \forall \psi_h \in
S_{h,\varepsilon}^{-1}.
\]
This finishes the proof.
\end{proof}

In order to set up clearly the precise formulas of the second term in the
asymptotic expansion of $w_h(\psi_h,\varphi_h-D_h\varphi)$, we need to consider
the first two terms in the expansions of $\mathrm V_1$ and $\mathrm V_2$ given
by Proposition \ref{prop:2.2}:
\begin{eqnarray*}
\mathrm V_2 &=& \boldsymbol a \mathrm H\mathrm D_{-1}+\boldsymbol b\mathrm
H\mathrm D_{-2} +\mathrm K_3, \qquad \mathrm K_3\in \mathcal E(-3),\\
\mathrm V_1 &=& \alpha\mathrm H\mathrm D_{-1}+\boldsymbol c\mathrm H\mathrm
D_{-2} +\mathrm J_3, \qquad \mathrm J_3\in \mathcal E(-3).
\end{eqnarray*}

\begin{proposition}[Second order asymptotics]\label{prop:A.4}
Let $\varphi_h$ be the solution of \eqref{eq:2.7} with right-hand side
$g=\mathrm W \varphi$. Let
\begin{eqnarray*}
\mathrm P_1^\varepsilon &:=& \underline C_1(\varepsilon) \Big(\alpha \mathrm D^2
-\boldsymbol a\Big),\\
\mathrm P_2^\varepsilon &:=& \underline C_2(\varepsilon) \Big( \mathrm D (\alpha
\mathrm D-\boldsymbol c)\mathrm D-(\boldsymbol a\mathrm D-\boldsymbol
b)\Big)+\smallfrac1{24} (\mathrm D^3\mathrm V_1\mathrm D+\mathrm V_2\mathrm
D^2).
\end{eqnarray*}
Then
\[
\Big| w_h(\psi_h,\varphi_h-D_h\varphi)-\sum_{\ell=1}^2 h^\ell  
(Q_{h,\varepsilon}^{-1}\psi_h,\mathrm P_\ell^\varepsilon \varphi)\Big| \le C
h^3\|\psi_h\|_0 \|\varphi\|_4 \qquad \forall \psi_h \in S_{h,\varepsilon},
\]
\end{proposition}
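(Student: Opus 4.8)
The plan is to refine the two-term analysis of Propositions \ref{prop:4.5} and \ref{prop:4.6}, pushing each asymptotic expansion one order further. The starting point is the decomposition \eqref{eq:4.4} of $w_h(\psi_h,\varphi_h-\varphi)$ into $T_1+T_2+T_3$, together with the companion decomposition of $w_h(\psi_h,\varphi-D_h\varphi)$ in \eqref{eq:4.11} into $T_4+T_5$; adding these gives $w_h(\psi_h,\varphi_h-D_h\varphi)$. I would re-examine each of $T_1,\dots,T_5$ and extract the $\mathcal O(h)$ and $\mathcal O(h^2)$ terms explicitly, showing the remainder is $\mathcal O(h^3\|\psi_h\|_0\|\varphi\|_4)$. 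For $T_1=-(Q_{h,\varepsilon}^{-1}\psi_h,\mathrm D\mathrm V_1\mathrm D\varphi)+(\psi_h,\mathrm D\mathrm V_1\mathrm D\varphi)$, the midpoint quadrature now needs its $h^2$-term: a Taylor/Euler–Maclaurin expansion gives $(Q_{h,\varepsilon}^{-1}\psi_h,u)-(\psi_h,u)=\tfrac1{24}h^2(\psi_h,u'')+\mathcal O(h^4)$ when $\psi_h$ is piecewise constant (the odd term vanishes on each subinterval because the midpoint rule is centered), which accounts for the $\tfrac1{24}(\mathrm V_2\mathrm D^2+\mathrm D^3\mathrm V_1\mathrm D)$ contributions to $\mathrm P_2^\varepsilon$ — here one must be careful to reconcile $u=\mathrm D\mathrm V_1\mathrm D\varphi$ with the operator form listed.

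Next I would treat the logarithmic/quadrature pieces $T_3$ and $T_4$ by invoking Proposition \ref{prop:4.3}, but now retaining \emph{both} correction terms $h\underline C_1(\punto/h+\cdot)\mathrm L_1 F_h u$ and $h^2\underline C_2(\punto/h+\cdot)\mathrm L_2 F_h u$ rather than absorbing the latter into the remainder. Using the identities $\{Q_{h,\varepsilon}^{-1}\psi_h,\underline C_\ell(\punto/h)u\}=\underline C_\ell(\varepsilon)(Q_{h,\varepsilon}^{-1}\psi_h,u)$ and $\{\psi_h',\underline C_\ell(\punto/h+1/2)u\}=\underline C_\ell(\varepsilon)(\psi_h',u)$ (extended to the second variable $\underline C_2$ exactly as in \eqref{eq:4.12}), the $\underline C_1$-terms from $\mathrm V_2$ produce $h\underline C_1(\varepsilon)(Q_{h,\varepsilon}^{-1}\psi_h,\boldsymbol a\varphi)$ and the $\underline C_1$-terms from $\mathrm V_1$ produce (after an integration-by-parts converting $(\psi_h',\alpha\varphi')$ into $-\alpha(Q_{h,\varepsilon}^{-1}\psi_h,\varphi'')$ up to controllable quadrature error, using $\psi_h'=\sum\psi_i(\delta-\delta)$) the term $-h\underline C_1(\varepsilon)\alpha(Q_{h,\varepsilon}^{-1}\psi_h,\varphi'')$; together these are $h(Q_{h,\varepsilon}^{-1}\psi_h,\mathrm P_1^\varepsilon\varphi)$. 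The $\underline C_2$-terms, involving $\mathrm L_2=\boldsymbol a\mathrm D-\boldsymbol b$ from $\mathrm V_2$ and $\mathrm L_2=\alpha\mathrm D-\boldsymbol c$ from $\mathrm V_1$, similarly assemble into the $\underline C_2(\varepsilon)$ part of $\mathrm P_2^\varepsilon$ after the analogous differentiations ($\mathrm D(\alpha\mathrm D-\boldsymbol c)\mathrm D$ coming from the double derivative structure in $T_4$, and $-(\boldsymbol a\mathrm D-\boldsymbol b)$ from $T_3$). The term $T_5=\{Q_{h,\varepsilon}^{-1}\psi_h,\mathrm V_2 Q_h^{-1}E_h\varphi\}$ is handled via Proposition \ref{prop:4.4}; since $\|E_h\varphi\|_0+h\|E_h\varphi\|_1\le Ch^2\|\varphi\|_2$ it is already $\mathcal O(h^2\|\psi_h\|_0\|\varphi\|_2)$ — to get it down to the remainder one needs a sharper $\mathcal O(h^3)$ expansion of $E_h$, which is where Appendix \ref{sec:A} presumably does the delicate bookkeeping (an extra Bernoulli-function correction term in $E_h$). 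Likewise $T_2$ and the truncation-error pieces $T_{31},T_{32},T_{41},T_{42}$ are re-estimated with one more power of $h$, all of which is routine once $F_h$'s approximation bound \eqref{eq:4.2} is applied at the $\|\varphi\|_4$ level.

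The main obstacle — and the reason this is deferred to the appendix — is the precise tracking of all second-order terms so that they coalesce \emph{exactly} into the stated operators $\mathrm P_1^\varepsilon,\mathrm P_2^\varepsilon$ with the right constants ($\tfrac1{24}$, the signs, the placement of $\mathrm D$'s), while simultaneously verifying that every discarded term is genuinely $\mathcal O(h^3\|\psi_h\|_0\|\varphi\|_4)$. In particular one must commute the discrete quadrature operator $Q_{h,\varepsilon}^{-1}$ past differentiation and past multiplication by the mesh-oscillating functions $\underline B_\ell(\punto/h)$, $\underline C_\ell(\punto/h)$ without losing a power of $h$; this uses the structural identities above plus the observation that on $S_{h,\varepsilon}$ these oscillating functions are effectively constant. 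A subtlety is that the expansion of Proposition \ref{prop:4.3} is stated for the grid $Q_h^{-1}$ (for $R_h$) and the \emph{shifted} grid $Q_{h,1/2}^{-1}$ (for $T_h$), whereas the testing is done on the $\varepsilon$-grid; the conversion goes through \eqref{eq:4.3} and the evaluation of $\underline C_\ell$ at the argument $\varepsilon$, so one must keep the two staggerings straight. Collecting $T_1,\dots,T_5$ with their expansions and invoking Lemma \ref{lemma:4.1} and \eqref{eq:4.40} to absorb the quadrature discrepancies between $(\psi_h,\cdot)$ and $(Q_{h,\varepsilon}^{-1}\psi_h,\cdot)$ at the $h^2$ level then yields the claimed bound.
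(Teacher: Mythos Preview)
Your overall strategy matches the paper's: decompose $w_h(\psi_h,\varphi_h-D_h\varphi)=T_1+\cdots+T_5$ via \eqref{eq:4.4} and \eqref{eq:4.11}, and push each expansion one order further using higher-order versions of Lemma~\ref{lemma:4.1}, Proposition~\ref{prop:4.3}, and Proposition~\ref{prop:4.4} (stated in the appendix as Lemma~\ref{lemma:A.1}, Proposition~\ref{prop:A.2}, and Proposition~\ref{prop:A.3}). Your identification of the $\underline C_1(\varepsilon)$ and $\underline C_2(\varepsilon)$ contributions from $T_3$ and $T_4$ is also correct.

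There is, however, a concrete misattribution. You claim that $T_1$ produces both pieces of $\tfrac1{24}(\mathrm D^3\mathrm V_1\mathrm D+\mathrm V_2\mathrm D^2)$. It cannot: since $T_1=-(Q_{h,\varepsilon}^{-1}\psi_h-\psi_h,\mathrm D\mathrm V_1\mathrm D\varphi)$ involves only $\mathrm V_1$, the refined midpoint formula (Lemma~\ref{lemma:A.1}) yields only the single term $\tfrac{h^2}{24}(Q_{h,\varepsilon}^{-1}\psi_h,\mathrm D^3\mathrm V_1\mathrm D\varphi)$. The contribution $\tfrac{h^2}{24}(Q_{h,\varepsilon}^{-1}\psi_h,\mathrm V_2\mathrm D^2\varphi)$ comes from $T_5$, not from a midpoint correction. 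The two-term Bernoulli expansion of $D_h$ (Proposition~\ref{prop:A.3}), evaluated on the grid $\{t_j\}$ with $\underline B_1(1/2)=0$ and $\underline B_2(1/2)=-\tfrac1{24}$, gives
\[
Q_h^{-1}(\varphi-D_h\varphi)=\smallfrac{h^2}{24}\,Q_h^{-1}F_h\varphi''+Q_h^{-1}E_h^{\#}\varphi,
\]
so that $T_5=\tfrac{h^2}{24}\{Q_{h,\varepsilon}^{-1}\psi_h,\mathrm V_2 Q_h^{-1}F_h\varphi''\}+\mathcal O(h^3)$. Hence $T_5$ is \emph{not} ``gotten down to the remainder'' as you say; it carries an explicit $\mathcal O(h^2)$ piece of $\mathrm P_2^\varepsilon$. (A minor point: the midpoint expansion you quote for $T_1$ has remainder $\mathcal O(h^3\|u\|_3)$, not $\mathcal O(h^4)$; the extra order would require $\varphi\in H^5$, beyond what the statement assumes.)
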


\begin{proof} See Appendix \ref{sec:A}.\end{proof}

\section{Convergence theorems}\label{sec:5}

The results of Sections \ref{sec:3} and \ref{sec:4} give a first simple $H^0$
estimate of the convergence of the method, showing that when
$\varepsilon=\pm1/6$, the solution superconverges to the projection
$D_h\varphi$. We first recall that \cite[Formula (5)]{DoSa:2001}
\begin{equation}\label{eq:5.1}
\|D_h\varphi-\varphi\|_s\le C h^{r-s}\| \varphi\|_r, \qquad s\le r\le 1, \qquad
s<1/2.
\end{equation}

\begin{theorem}\label{the:5.1}
Let $\varphi_h$ be the solution of \eqref{eq:2.7} with right-hand side
$g=\mathrm W \varphi$ and $\varepsilon\not\in \frac12\mathbb Z$. Then
\begin{equation}\label{eq:5.2}
\| \varphi_h - \varphi\|_0 \le C_\varepsilon h \|\varphi\|_2.
\end{equation}
Moreover,
\begin{equation}\label{eq:5.3}
\| \varphi_h-D_h\varphi\|_0\le C h^2 \|\varphi\|_3 \qquad \mbox{if
$\varepsilon\in \{-1/6,1/6\}$}.
\end{equation}
\end{theorem}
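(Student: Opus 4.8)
The plan is to combine the stability result of Theorem~\ref{the:3.1} with the consistency estimates of Section~\ref{sec:4}, in the classical ``stability + consistency $\Rightarrow$ convergence'' pattern for Petrov--Galerkin schemes. First I would apply Theorem~\ref{the:3.1} to the discrete error $\varphi_h-D_h\varphi\in S_h$: since $\|\varphi_h-\varphi\|_0\le\|\varphi_h-D_h\varphi\|_0+\|D_h\varphi-\varphi\|_0$ and the second term is controlled by \eqref{eq:5.1}, it suffices to bound $\|\varphi_h-D_h\varphi\|_0$. By the inf--sup condition this is bounded (up to the constant $\beta_\varepsilon^{-1}$ and for $h\le h_0$) by
\[
\sup_{0\neq\psi_h\in S_{h,\varepsilon}}\frac{|w_h(\psi_h,\varphi_h-D_h\varphi)|}{\|\psi_h\|_0}.
\]

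For the first assertion \eqref{eq:5.2}, I would invoke the zero-order asymptotic estimate \eqref{eq:4.42}, which gives directly $|w_h(\psi_h,\varphi_h-D_h\varphi)|\le Ch\|\psi_h\|_0\|\varphi\|_2$, hence $\|\varphi_h-D_h\varphi\|_0\le C_\varepsilon h\|\varphi\|_2$. Combining with $\|D_h\varphi-\varphi\|_0\le Ch\|\varphi\|_1\le Ch\|\varphi\|_2$ from \eqref{eq:5.1} (with $s=0$, $r=1$) yields \eqref{eq:5.2}. For the superconvergent estimate \eqref{eq:5.3}, the key observation is that when $\varepsilon\in\{-1/6,1/6\}$ we have $\underline C_1(\varepsilon)=0$ by \eqref{eq:4.30}. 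Therefore Corollary~\ref{cor:4.7} (either form, the first one suffices) reduces to $|w_h(\psi_h,\varphi_h-D_h\varphi)|\le Ch^2\|\psi_h\|_0\|\varphi\|_3$, and the inf--sup condition then gives $\|\varphi_h-D_h\varphi\|_0\le Ch^2\|\varphi\|_3$, which is exactly \eqref{eq:5.3}.

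I expect no serious obstacle here: all the heavy lifting has already been done in Sections~\ref{sec:3} and~\ref{sec:4}, and the theorem is really a bookkeeping corollary. The only points requiring a little care are (i) making sure $h\le h_0$ so that Theorem~\ref{the:3.1} applies, and absorbing the regime $h>h_0$ (equivalently finitely many $N$) into the constants; (ii) keeping track of the Sobolev indices so that the right-hand side norms ($\|\varphi\|_2$ for the first bound, $\|\varphi\|_3$ for the second) dominate all the intermediate terms, in particular $\|D_h\varphi-\varphi\|_0$; and (iii) noting that \eqref{eq:4.42} and Corollary~\ref{cor:4.7} are stated for the continuous second argument $\varphi$ but used with argument $\varphi_h-D_h\varphi$, which is legitimate precisely because those propositions already account for this (they bound $w_h(\psi_h,\varphi_h-D_h\varphi)$ directly). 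No new estimates are needed.
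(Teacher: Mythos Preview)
Your proposal is correct and follows essentially the same approach as the paper: apply the inf--sup condition of Theorem~\ref{the:3.1} to $\varphi_h-D_h\varphi$, bound the resulting supremum by the zero-order consistency estimate \eqref{eq:4.42} for \eqref{eq:5.2} (combined with \eqref{eq:5.1}), and by Corollary~\ref{cor:4.7} together with $\underline C_1(\pm 1/6)=0$ for \eqref{eq:5.3}. The care points you list are apt but cause no difficulty.
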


\begin{proof} Using Theorem \ref{the:3.1}, and \eqref{eq:4.42}, we can  prove
that
\[
\beta_\varepsilon \| \varphi_h-D_h\varphi\|_0 \le \sup_{0\neq \psi_h \in
S_{h,\varepsilon}}\frac{|w_h(\psi_h,\varphi_h-D_h\varphi)|}{\|\psi_h\|} \le C
h\|\varphi\|_2 .
\]
Applying \eqref{eq:5.1}, this proves \eqref{eq:5.2}. The superconvergence bound
\eqref{eq:5.3} follows from Corollary \ref{cor:4.7}  (note that $\underline
C_1(\pm 1/6)=0$) and Theorem \ref{the:3.1}.
\end{proof}

The superconvergence estimate can be first exploited with a postprocessing of
the solution: given $v$ smooth enough we approximate
\[
\int_0^1 \varphi(t) v(t) \mathrm dt \approx h \sum_{j=0}^{N-1} \varphi_j\,
v(t_j)=(Q_h^{-1}\varphi_h,v).
\]
This includes the fully discrete double layer potential \eqref{eq:1.17} to
approximate \eqref{eq:1.13}. 

\begin{corollary} \label{cor:5.2}
Let $\varphi_h$ be the solution of \eqref{eq:2.7} with right-hand side
$g=\mathrm W \varphi$, and $\varepsilon\in \{-1/6,1/6\}$. Then
\[
|(Q_h^{-1}\varphi_h,v)-(\varphi,v)| \le C h^2 \|\varphi\|_3 \| v\|_2 \qquad
\forall v \in H^2.
\]
\end{corollary}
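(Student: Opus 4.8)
The plan is to estimate the quantity $(Q_h^{-1}\varphi_h,v)-(\varphi,v)$ by splitting it into a part controlled by the superconvergence estimate \eqref{eq:5.3} of Theorem \ref{the:5.1} and a quadrature-type remainder. First I would write
\[
(Q_h^{-1}\varphi_h,v)-(\varphi,v)=\big((Q_h^{-1}\varphi_h,v)-(Q_h^{-1}D_h\varphi,v)\big)+\big((Q_h^{-1}D_h\varphi,v)-(\varphi,v)\big),
\]
so that the first bracket depends only on $\varphi_h-D_h\varphi\in S_h$, while the second is a fixed consistency error for the projection $D_h$.

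For the first bracket, since $\varphi_h-D_h\varphi=\sum_j(\varphi_j-\widehat{D_h\varphi})_j\chi_j$ is a piecewise constant function, $Q_h^{-1}(\varphi_h-D_h\varphi)=h\sum_j(\varphi_j-(D_h\varphi)(t_j))\delta_{t_j}$ and the pairing against $v$ is just $h\sum_j(\varphi_j-(D_h\varphi)(t_j))v(t_j)$. A Cauchy--Schwarz argument in $\mathbb C^N$ together with the identity $\|w_h\|_0=h^{1/2}|(\text{coefficients})|$ used in the proof of Proposition \ref{prop:3.2} gives
\[
|(Q_h^{-1}(\varphi_h-D_h\varphi),v)|\le \|\varphi_h-D_h\varphi\|_0\,\Big(h\sum_{j=0}^{N-1}|v(t_j)|^2\Big)^{1/2},
\]
and the discrete $\ell^2$ norm of the samples of $v$ is bounded by $C\|v\|_1$ (hence by $C\|v\|_2$), e.g.\ via a Sobolev embedding / trace-type inequality on each subinterval. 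Then \eqref{eq:5.3} bounds $\|\varphi_h-D_h\varphi\|_0$ by $Ch^2\|\varphi\|_3$, giving the desired $h^2$ estimate for this term.

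For the second bracket I would further decompose
\[
(Q_h^{-1}D_h\varphi,v)-(\varphi,v)=\big((Q_h^{-1}D_h\varphi,v)-(D_h\varphi,v)\big)+\big(D_h\varphi-\varphi,v\big).
\]
The last term is handled by \eqref{eq:5.1} with $r=1$ and a negative $s$: duality gives $|(D_h\varphi-\varphi,v)|\le\|D_h\varphi-\varphi\|_{-1}\|v\|_1$, and since $\|D_h\varphi-\varphi\|_{-1}\le Ch^2\|\varphi\|_1$ (this is \eqref{eq:5.1} with $r=1$, $s=-1$) we get the $h^2$ bound. The middle term is the midpoint quadrature error for $(D_h\varphi)v$; writing $D_h\varphi=\varphi-(\varphi-D_h\varphi)$ and using that $Q_h^{-1}$ reproduces midpoint values, one reduces it to the midpoint error for $\varphi v$ plus a lower-order contribution from $\varphi-D_h\varphi$. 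The midpoint error for $\varphi v$ is $O(h^2)$ by the standard estimate already recorded in the proof of Lemma \ref{lemma:4.1}, with a bound of the form $Ch^2\|(\varphi v)''\|_0\le Ch^2\|\varphi\|_2\|v\|_2$; the remaining piece $(Q_h^{-1}-\mathrm{Id})((\varphi-D_h\varphi)v)$ is estimated by combining \eqref{eq:4.40}-type bounds with \eqref{eq:5.1}, again yielding $O(h^2)$.

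Collecting the three pieces gives $|(Q_h^{-1}\varphi_h,v)-(\varphi,v)|\le Ch^2\|\varphi\|_3\|v\|_2$. The main obstacle I anticipate is the bookkeeping of which Sobolev indices are admissible: the sharp smoothing of $D_h$ is only available for $r\le 1$ and $s<1/2$ per \eqref{eq:5.1}, so the negative-norm estimate for $\varphi-D_h\varphi$ must be set up carefully (in $H^{-1}$, which is within range), and one must make sure $v\in H^2$ suffices to control the discrete samples $h\sum_j|v(t_j)|^2$ and the quadrature remainders uniformly in $h$. None of these steps is deep, but getting the exponents to line up so that no term is worse than $h^2$ requires some care.
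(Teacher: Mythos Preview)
Your overall strategy---a three-term telescoping split through $D_h\varphi$ and the use of the superconvergence estimate \eqref{eq:5.3} together with the negative-norm bound \eqref{eq:5.1}---matches the paper's. The paper's split, however, is slightly different and cleaner: it inserts $\varphi_h$ rather than $Q_h^{-1}D_h\varphi$,
\[
(Q_h^{-1}\varphi_h,v)-(\varphi,v)=(Q_h^{-1}\varphi_h-\varphi_h,v)+(\varphi_h-D_h\varphi,v)+(D_h\varphi-\varphi,v),
\]
so that the first term is handled in one line by Lemma~\ref{lemma:4.1}(a) (with $\varepsilon=0$, since $\varphi_h\in S_h=S_{h,0}$) and the bound $\|\varphi_h\|_0\le C\|\varphi\|_2$ from \eqref{eq:5.2}. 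Your first and third terms are fine.

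Your middle term $(Q_h^{-1}D_h\varphi,v)-(D_h\varphi,v)$ is where the argument slips. The detour through $D_h\varphi=\varphi-(\varphi-D_h\varphi)$ leaves you with the ``remaining piece''
\[
h\sum_{j}(\varphi-D_h\varphi)(t_j)\,v(t_j)-\int_0^1(\varphi-D_h\varphi)\,v,
\]
and the tools you invoke, \eqref{eq:4.40} and \eqref{eq:5.1}, do not control it: \eqref{eq:4.40} needs a piecewise constant argument (which $\varphi-D_h\varphi$ is not), and \eqref{eq:5.1} handles the integral part but not the discrete sum, where \eqref{eq:5.4} only gives $|(\varphi-D_h\varphi)(t_j)|\le Ch\|\varphi\|_2$, hence $O(h)$ for the sum. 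You would need the sharper bound $\|I_h\varphi-D_h\varphi\|_{L^\infty}\le Ch^2\|\varphi\|_3$ (which appears only later, via Proposition~\ref{prop:A.3}) to close this. The simple fix is to skip the detour entirely: since $D_h\varphi\in S_h$, Lemma~\ref{lemma:4.1}(a) with $\varepsilon=0$ gives directly
\[
|(Q_h^{-1}D_h\varphi,v)-(D_h\varphi,v)|\le \tfrac12(\pi h)^2\|D_h\varphi\|_0\|v\|_2\le Ch^2\|\varphi\|_0\|v\|_2,
\]
which is exactly what the paper does (with $\varphi_h$ in place of $D_h\varphi$).
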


\begin{proof} Using Lemma \ref{lemma:4.1}(a) (with $\varepsilon=0$), we can
easily bound
\begin{eqnarray*}
|(Q_h^{-1}\varphi_h,v)-(\varphi,v)|&\le&
|(Q_h^{-1}\varphi_h-\varphi_h,v)|+|(\varphi_h-D_h\varphi,
v)|+|(D_h\varphi-\varphi,v)|\\
&\le & C_1 h^2\|\varphi_h\|_0 
\|v\|_2+\|\varphi_h-D_h\varphi\|_0\|v\|_0+\|D_h\varphi-\varphi\|_{-1}\|v\|_1 \\
&\le & C_2 h^2
\Big(\|\varphi\|_2\|v\|_2+\|\varphi\|_3\|v\|_0+\|\varphi\|_1\|v\|_1\Big),
\end{eqnarray*}
by \eqref{eq:5.2}, \eqref{eq:5.3}, and \eqref{eq:5.1}.
\end{proof}

We next introduce the interpolation operator
\[
I_h u:= \sum_{j=0}^{N-1} u(t_j)\chi_j.
\]
The Sobolev embedding theorem \cite[Lemma 5.3.2]{SaVa:2002} and Proposition
\ref{prop:4.4} show that 
\[
\max_j |u(t_j)-(D_hu)(t_j)-h \underline B_1(t_j/h+1/2) (F_h u')(t_j)|\le \|
E_hu\|_{L^\infty}\le C\| E_hu\|_1\le C h \|u\|_2.
\]
However, $\underline B_1(t_j/h+1/2)=\underline B_1(1/2)=0$ and therefore
\begin{equation}\label{eq:5.4}
\| I_h u-D_hu\|_{L^\infty}\le C h \| u\|_2\qquad \mbox{and}\qquad \|
D_hu\|_{L^\infty} \le C \|u\|_2.
\end{equation}

\begin{theorem}\label{the:5.3}
Let $\varphi_h$ be the solution of \eqref{eq:2.7} with right-hand side
$g=\mathrm W \varphi$ and $\varepsilon\not\in \frac12\mathbb Z$. Then
\[
\max_j |\varphi_j-\varphi(t_j)|=\| \varphi_h-I_h \varphi\|_{L^\infty} \le C h
\|\varphi\|_3.
\]
\end{theorem}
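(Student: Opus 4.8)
The plan is to control the $L^\infty$ distance between $\varphi_h$ and $D_h\varphi$ by converting an $L^2$-type estimate into a uniform estimate, then invoke the already-proven approximation properties of $D_h$ and $I_h$. The triangle inequality gives
\[
\|\varphi_h-I_h\varphi\|_{L^\infty}\le \|\varphi_h-D_h\varphi\|_{L^\infty}+\|D_h\varphi-I_h\varphi\|_{L^\infty},
\]
and the second term is already $\mathcal O(h\|\varphi\|_2)$ by \eqref{eq:5.4}. So everything reduces to showing $\|\varphi_h-D_h\varphi\|_{L^\infty}\le Ch\|\varphi\|_3$. Since $\varphi_h-D_h\varphi\in S_h$ is a piecewise constant function, its $L^\infty$ norm equals $\max_j|(\varphi_h-D_h\varphi)(t_j)|$, which we can extract by testing against Dirac deltas: $\|\varphi_h-D_h\varphi\|_{L^\infty}=\max_j|\{\delta_{t_j},\varphi_h-D_h\varphi\}|$.

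Next I would reuse the inf-sup machinery, but now I need it to produce a pointwise rather than $L^2$ bound. The natural route is to note that $\varphi_h-D_h\varphi\in S_h$ and write $\|\varphi_h-D_h\varphi\|_{L^\infty}\le C h^{-1/2}\|\varphi_h-D_h\varphi\|_0$ using the standard inverse inequality on the $N$-dimensional space $S_h$ (a piecewise constant with coefficient vector $(c_j)$ has $\|\cdot\|_0=h^{1/2}|(c_j)|$ and $\|\cdot\|_{L^\infty}=\max_j|c_j|\le|(c_j)|$). Combining this with the stability Theorem \ref{the:3.1} and the zero-order consistency estimate \eqref{eq:4.42}:
\[
\|\varphi_h-D_h\varphi\|_{L^\infty}\le C h^{-1/2}\|\varphi_h-D_h\varphi\|_0\le C h^{-1/2}\beta_\varepsilon^{-1}\sup_{\psi_h}\frac{|w_h(\psi_h,\varphi_h-D_h\varphi)|}{\|\psi_h\|_0}\le C h^{-1/2}\cdot h\|\varphi\|_2 = Ch^{1/2}\|\varphi\|_2,
\]
which is actually better than needed but only gives $\mathcal O(h^{1/2})$ — wait, that already beats $\mathcal O(h)$, so in fact this crude argument already suffices. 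Let me reconsider: $h^{-1/2}\cdot h = h^{1/2}$, so we get $\|\varphi_h-D_h\varphi\|_{L^\infty}\le Ch^{1/2}\|\varphi\|_2$. That is weaker than $\mathcal O(h)$, so the crude inverse estimate is \emph{not} enough; we lose half a power. The honest route must squeeze an extra $h^{1/2}$ somewhere, most plausibly by using the sharper first-order consistency from Corollary \ref{cor:4.7} (which carries a factor $h^2$ on the remainder, with the $h^1$ term being a clean expression $h\underline C_1(\varepsilon)(Q_{h,\varepsilon}^{-1}\psi_h,\boldsymbol a\varphi-\alpha\varphi'')$) and estimating the contribution of that leading term pointwise rather than in $L^2$.

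Thus the real plan is: use Corollary \ref{cor:4.7}, writing $w_h(\psi_h,\varphi_h-D_h\varphi) = -h\underline C_1(\varepsilon)(Q_{h,\varepsilon}^{-1}\psi_h,\boldsymbol a\varphi-\alpha\varphi'') + \mathcal O(h^2\|\psi_h\|_0\|\varphi\|_3)$. The $\mathcal O(h^2)$ remainder, fed through stability and the inverse inequality, gives $h^{-1/2}\cdot h^2 = h^{3/2}$, which is fine. The leading term is the obstacle: $(Q_{h,\varepsilon}^{-1}\psi_h,\boldsymbol a\varphi-\alpha\varphi'')$ is, up to the quadrature discrepancy in Lemma \ref{lemma:4.1}, essentially $(\psi_h,\boldsymbol a\varphi-\alpha\varphi'')$, and one needs to identify its $S_h$-Riesz representative. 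Here is where I would exploit the structure: the map $\psi_h\mapsto (\psi_h, u)$ for fixed smooth $u$ acts on $S_{h,\varepsilon}$ as testing against the $S_{h,\varepsilon}$-interpolant/$L^2$-projection of $u$, which in $L^\infty$-representative terms corresponds (after applying $w_h^{-1}$, whose inverse is bounded $S_h\to S_{h,\varepsilon}$ by stability) to a function bounded by $C\|u\|$ in an appropriate norm. Concretely, I would argue: for each fixed $j$, pick $\psi_h=\psi_h^{(j)}\in S_{h,\varepsilon}$ realizing (up to a constant via stability) the supremum that controls $\{\delta_{t_j},\varphi_h-D_h\varphi\}$; the leading term becomes $h\underline C_1(\varepsilon)(\psi_h^{(j)}, \boldsymbol a\varphi-\alpha\varphi'')$, and since $\boldsymbol a\varphi-\alpha\varphi''\in H^1\subset L^\infty$ with norm $\le C\|\varphi\|_3$, this is bounded by $h\cdot C\|\varphi\|_3\cdot\|\psi_h^{(j)}\|_{L^1}\le h\cdot C\|\varphi\|_3\cdot\|\psi_h^{(j)}\|_0$ (since $\psi_h^{(j)}$ is supported on a union of intervals of total measure $1$). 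After dividing by $\|\psi_h^{(j)}\|_0$ and applying Theorem \ref{the:3.1}, the $h^{-1/2}$ inverse factor never appears for this leading term — only for the $\mathcal O(h^2)$ remainder — so we get $\{\delta_{t_j},\varphi_h-D_h\varphi\} = \mathcal O(h\|\varphi\|_3) + \mathcal O(h^{3/2}\|\varphi\|_3)$, uniformly in $j$. The main obstacle is making rigorous this passage from the $L^2$ inf-sup bound to a pointwise bound: one must verify that the specific test function realizing the pointwise pairing can be chosen with $\|\psi_h\|_{L^1}\le C\|\psi_h\|_0$ and that stability still applies to it. The cleanest formalization is probably to introduce, for each node $t_j$, a discrete Green's function $G_j\in S_{h,\varepsilon}$ defined by $w_h(G_j,\varphi_h)=\{\delta_{t_j},\varphi_h\}$ for all $\varphi_h\in S_h$ — this exists and satisfies $\|G_j\|_0\le C\beta_\varepsilon^{-1}$ by Theorem \ref{the:3.1} applied to the transposed form — and then evaluate $w_h(G_j,\varphi_h-D_h\varphi)$ using Corollary \ref{cor:4.7}; the $L^1$ mass bound $\|G_j\|_{L^1}\le\|G_j\|_0\le C$ closes the estimate without any inverse inequality on the leading order term.
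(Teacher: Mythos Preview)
Your Green's function approach has a genuine gap: the bound $\|G_j\|_0\le C\beta_\varepsilon^{-1}$ does \emph{not} follow from Theorem~\ref{the:3.1} applied to the transposed form. The transposed inf-sup gives
\[
\beta_\varepsilon\|G_j\|_0\le \sup_{0\neq\varphi_h\in S_h}\frac{|w_h(G_j,\varphi_h)|}{\|\varphi_h\|_0}=\sup_{0\neq\varphi_h\in S_h}\frac{|\varphi_h(t_j)|}{\|\varphi_h\|_0},
\]
and the right-hand side equals $h^{-1/2}$ (take $\varphi_h=\chi_j$), so you only get $\|G_j\|_0\le \beta_\varepsilon^{-1}h^{-1/2}$. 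Feeding this into your estimate of the leading term yields $h\cdot\|G_j\|_{L^1}\le h\cdot\|G_j\|_0\le Ch^{1/2}$, i.e.\ exactly the crude $\mathcal O(h^{1/2})$ you already rejected. It is plausible that $\|G_j\|_0$ (or $\|G_j\|_{L^1}$) is in fact uniformly bounded for this particular order-one operator, but establishing that is a nontrivial discrete regularity statement that the $L^2$ inf-sup alone cannot deliver.

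The paper avoids discrete Green's functions altogether by \emph{subtracting} the leading asymptotic term rather than estimating it. Set $\xi:=\underline C_1(\varepsilon)\,\mathrm W^{-1}(\alpha\varphi''-\boldsymbol a\varphi)$ and let $C_h$ be the discrete solution operator ($C_h\varphi=\varphi_h$). Then the leading term in \eqref{eq:4.41} is exactly $h(Q_{h,\varepsilon}^{-1}\psi_h,\mathrm W\xi)=w_h(\psi_h,hC_h\xi)$, so Corollary~\ref{cor:4.7} reads
\[
|w_h(\psi_h,\varphi_h-D_h\varphi-hC_h\xi)|\le Ch^2\|\psi_h\|_0\|\varphi\|_3.
\]
Now stability gives $\|\varphi_h-D_h\varphi-hC_h\xi\|_0\le Ch^2\|\varphi\|_3$; replacing $C_h\xi$ by $D_h\xi$ costs $h\|C_h\xi-D_h\xi\|_0\le Ch^2\|\xi\|_2$ by Theorem~\ref{the:5.1}. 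The inverse inequality then yields $\|\varphi_h-D_h\varphi-hD_h\xi\|_{L^\infty}\le Ch^{3/2}\|\varphi\|_3$, and the remaining pieces $h\|D_h\xi\|_{L^\infty}\le Ch\|\xi\|_2\le Ch\|\varphi\|_3$ and $\|D_h\varphi-I_h\varphi\|_{L^\infty}\le Ch\|\varphi\|_2$ are handled directly by \eqref{eq:5.4}. The trick is that after subtraction only an $\mathcal O(h^2)$ quantity is hit by the $h^{-1/2}$ inverse factor.
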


\begin{proof}
We rely on the first order asymptotic formula of Corollary \ref{cor:4.7}. Let
$C_h$ be the solution operator associated to \eqref{eq:2.7}, namely
$C_h\varphi=\varphi_h$. Let $\xi:=\underline C_1(\varepsilon)\,\mathrm W^{-1}
(\alpha \varphi''-\boldsymbol a \varphi).$ Then \eqref{eq:4.41} shows that
\[
|w_h(\psi_h,\varphi_h-D_h\varphi)-h (Q_{h,\varepsilon}^{-1}\psi_h,\mathrm W
\xi)|\le C h^2 \|\psi_h\|_0\|\varphi\|_3 \qquad \forall \psi_h \in
S_{h,\varepsilon}^{-1},
\]
which can also be written as
\[
| w_h(\psi_h,C_h \varphi-D_h\varphi-h C_h \xi)|\le C h^2
\|\psi_h\|_0\|\varphi\|_3 \qquad \forall \psi_h \in S_{h,\varepsilon}^{-1}.
\]
By the inf-sup condition in Theorem \ref{the:3.1}, it follows that
\[
\| C_h\varphi-D_h\varphi-h C_h\xi\|_0\le C h^2 \|\varphi\|_3,
\]
and therefore, by \eqref{eq:5.2} applied to $\xi$,
\begin{eqnarray}\nonumber
\| C_h\varphi-D_h\varphi-h D_h\xi\|_0&\le & h\|C_h\xi-D_h\xi\|_0+C h^2
\|\varphi\|_3\le Ch^2 (\|\xi\|_2+\|\varphi\|_3)\\
&\le &C' h^2 \|\varphi\|_3,\label{eq:5.5}
\end{eqnarray}
since  $\|\xi\|_2\le C\|\varphi\|_3$
Note that for piecewise constant functions on a uniform grid of meshsize $h$ we
can estimate $\|\rho_h\|_{L^\infty}\le h^{-1/2}\|\rho_h\|_0$. 
Thus,
\begin{eqnarray*}
\| C_h \varphi-I_h\varphi\|_{L^\infty} &\le & h^{-1/2}\| C_h\varphi-D_h\varphi-h
D_h\xi\|_0+\|D_h\varphi-I_h\varphi\|_{L^\infty} + h\| D_h \xi\|_{L^ \infty}\\
&\le & C h^{3/2}\|\varphi\|_3+ C h \|\varphi\|_2+ C h \|\xi\|_2,
\end{eqnarray*}
by \eqref{eq:5.4}. This proves the result.
\end{proof}

\begin{theorem} \label{the:5.4}
Let $\varphi_h$ be the solution of \eqref{eq:2.7} with right-hand side
$g=\mathrm W \varphi$ and $\varepsilon\in \{-1/6,1/6\}$. Then
\[
\max_j |\varphi_j-\varphi(t_j)|=\| \varphi_h-I_h \varphi\|_{L^\infty} \le C h^2
\|\varphi\|_4.
\]
\end{theorem}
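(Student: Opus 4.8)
The plan is to upgrade the $L^\infty$ estimate of Theorem \ref{the:5.3} by extracting one more term in the asymptotic expansion of $w_h(\psi_h,\varphi_h-D_h\varphi)$. The starting point is Proposition \ref{prop:A.4} (second order asymptotics), which gives
\[
\Big| w_h(\psi_h,\varphi_h-D_h\varphi)-\sum_{\ell=1}^2 h^\ell (Q_{h,\varepsilon}^{-1}\psi_h,\mathrm P_\ell^\varepsilon\varphi)\Big|\le C h^3\|\psi_h\|_0\|\varphi\|_4.
\]
When $\varepsilon\in\{-1/6,1/6\}$ we have $\underline C_1(\varepsilon)=0$ by \eqref{eq:4.30}, so the operator $\mathrm P_1^\varepsilon=\underline C_1(\varepsilon)(\alpha\mathrm D^2-\boldsymbol a)$ vanishes identically, and only the $h^2$ term survives. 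So the first step is to define $\xi:=\mathrm W^{-1}(\mathrm P_2^\varepsilon\varphi)$, which is well defined by the invertibility of $\mathrm W$ and satisfies $\|\xi\|_2\le C\|\mathrm P_2^\varepsilon\varphi\|_1\le C\|\varphi\|_4$ since $\mathrm P_2^\varepsilon$ is a third-order operator and $\mathrm W^{-1}\in\mathcal E(-1)$. Then $(Q_{h,\varepsilon}^{-1}\psi_h,\mathrm P_2^\varepsilon\varphi)=(Q_{h,\varepsilon}^{-1}\psi_h,\mathrm W\xi)$, and Proposition \ref{prop:A.4} rewrites as
\[
| w_h(\psi_h,\varphi_h-D_h\varphi-h^2 C_h\xi)|\le C h^3\|\psi_h\|_0\|\varphi\|_4\qquad\forall\psi_h\in S_{h,\varepsilon},
\]
exactly as in the proof of Theorem \ref{the:5.3} but with $h$ replaced by $h^2$ and $\xi$ replaced by the new quantity. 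Here I use that $w_h(\psi_h,C_h\xi)=(Q_{h,\varepsilon}^{-1}\psi_h,\mathrm W\xi)$ by definition of the discrete solution operator $C_h$ (this is the identity $w_h(\psi_h,\varphi_h)=\{Q_{h,\varepsilon}^{-1}\psi_h,g\}$ with $g=\mathrm W\xi$).

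The second step applies the inf-sup condition of Theorem \ref{the:3.1} to deduce
\[
\|C_h\varphi-D_h\varphi-h^2 C_h\xi\|_0\le C h^3\|\varphi\|_4.
\]
Then, using the $L^2$ convergence estimate \eqref{eq:5.2} for $\xi$ (that is, $\|C_h\xi-\xi\|_0\le C h\|\xi\|_2\le C h\|\varphi\|_4$) — and accepting the extra term $\|D_h\xi-\xi\|_0\le C h\|\xi\|_1$ from \eqref{eq:5.1} — I can replace $h^2 C_h\xi$ by $h^2 D_h\xi$ at the cost of an $h^3\|\varphi\|_4$ term, giving
\[
\|C_h\varphi-D_h\varphi-h^2 D_h\xi\|_0\le C h^3\|\varphi\|_4.
\]

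The final step converts this $L^2$ bound into the claimed $L^\infty$ bound, exactly as in Theorem \ref{the:5.3}. Using the inverse inequality $\|\rho_h\|_{L^\infty}\le h^{-1/2}\|\rho_h\|_0$ for piecewise constants on the uniform grid, together with the bounds $\|D_h\varphi-I_h\varphi\|_{L^\infty}\le C h^2\|\varphi\|_3$ (this is the $\mathcal O(h^2)$ refinement implicit in Proposition \ref{prop:4.4}, since $\underline B_1(t_j/h+1/2)=\underline B_1(1/2)=0$ kills the $\mathcal O(h)$ term in $E_h\varphi$; note however that \eqref{eq:5.4} as stated only gives $\mathcal O(h)$, so I will need the sharper statement $\|I_h u-D_h u\|_{L^\infty}\le C h^2\|u\|_2$) and $\|D_h\xi\|_{L^\infty}\le C\|\xi\|_2\le C\|\varphi\|_4$ from \eqref{eq:5.4}, I write
\[
\|C_h\varphi-I_h\varphi\|_{L^\infty}\le h^{-1/2}\|C_h\varphi-D_h\varphi-h^2 D_h\xi\|_0+\|D_h\varphi-I_h\varphi\|_{L^\infty}+h^2\|D_h\xi\|_{L^\infty}\le C h^{5/2}\|\varphi\|_4+C h^2\|\varphi\|_4,
\]
which yields the result. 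The main obstacle I anticipate is not in this argument, which is a faithful repetition of Theorem \ref{the:5.3}'s structure one order higher, but in making sure the auxiliary estimate $\|I_h u-D_h u\|_{L^\infty}\le C h^2\|u\|_2$ is legitimately available: the displayed \eqref{eq:5.4} only records the $\mathcal O(h)$ version, so I would either sharpen that line or argue directly from the $\mathcal O(h^2)$ bound on $E_h u$ in $H^1$ combined with the Sobolev embedding $H^1\hookrightarrow L^\infty$ and the vanishing of $\underline B_1$ at grid midpoints. Everything else — the invertibility of $\mathrm W$, the order count for $\mathrm P_2^\varepsilon$, the inf-sup bound, and the $L^2$-to-$L^\infty$ inverse inequality — is already in place from earlier in the paper.
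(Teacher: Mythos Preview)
Your argument follows the paper's proof essentially line for line: invoke Proposition~\ref{prop:A.4}, use $\underline C_1(\pm 1/6)=0$ to kill the first-order term, set $\xi=\mathrm W^{-1}\mathrm P_2^\varepsilon\varphi$, apply the inf-sup condition, replace $C_h\xi$ by $D_h\xi$ via Theorem~\ref{the:5.1}, and finish with the inverse inequality. The structure is correct and matches the paper.

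The one point you yourself flag is indeed where the argument is incomplete as written. The bound $\|I_h u-D_h u\|_{L^\infty}\le C h^2\|u\|_2$ does \emph{not} follow from Proposition~\ref{prop:4.4}: that proposition only gives $\|E_h u\|_1\le C h\|u\|_2$, so after Sobolev embedding and the vanishing of $\underline B_1(1/2)$ you recover exactly the $\mathcal O(h)$ estimate \eqref{eq:5.4}, nothing better. The vanishing of $\underline B_1(1/2)$ is what makes $(I_h u-D_h u)(t_j)=(E_h u)(t_j)$ in the first place; it does not buy an extra power of $h$. To get $\mathcal O(h^2)$ you must use the two-term expansion $E_h^{\#}$ of Proposition~\ref{prop:A.3}: since $\underline B_1(1/2)=0$ and $\underline B_2(1/2)=-1/24$, one has $(I_h u-D_h u)(t_j)=(E_h^{\#} u)(t_j)+\tfrac{h^2}{24}(F_h u'')(t_j)$, and then $\|E_h^{\#} u\|_1\le C h^2\|u\|_3$ together with the Sobolev embedding yields $\|I_h u-D_h u\|_{L^\infty}\le C h^2\|u\|_3$ (note $\|u\|_3$, not $\|u\|_2$). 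This is precisely what the paper does. With that correction your proof is complete.
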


\begin{proof}
The proof of this estimate is very similar to that of Theorem \ref{the:5.3}. We
need to rely on the second order asymptotics of the error (Proposition
\ref{prop:A.4}) to reveal the first non-vanishing term in the asymptotic error
expansion when $\varepsilon \in \{-1/6,1/6\}$.In addition to this, using
Proposition \ref{prop:A.3} and the Sobolev imbedding theorem, it is easy to show
that the estimate \eqref{eq:5.4} can be improved to $\| I_h u-D_h u\|_{L^\infty}
\le C h^2 \| u\|_3.$
An inverse inequality, the stability estimate (Theorem \ref{the:3.1}) and
Theorem \ref{the:5.1}, can be used to show that
\[
\| \varphi_h - D_h\varphi-h^2 D_h \gamma\|_{L^\infty} \le C
h^{5/2}\|\varphi\|_4, \mbox{ where } \gamma:= \mathrm W^{-1} \mathrm
P_2^\varepsilon \varphi,
\] 
and $\mathrm P_2^\varepsilon\in \mathcal E(2)$ is given in Proposition
\ref{prop:A.4}.
All remaining details are omitted.
\end{proof}

\section{Numerical experiments}\label{sec:7}

We will now illustrate some of the previous convergence estimates with a simple
example. We take two ellipses, one centered at $(0,0)$ with semiaxes $1$ and
$2$, and a second one centered at $(4,5)$ with semiaxes $2$ and $1$. We look for
solutions of \eqref{eq:1.14} (radiating solutions of the Helmholtz equations) in
the exterior domain that lies outside both ellipses, with Neumann conditions on
the boundaries (see Section \ref{sec:1.3}). As exact solution we take $U(\mathbf
z):=H^{(1)}_0(k|\mathbf z-\mathbf z_0|)$, where $\mathbf z_0:=(0.1, 0.2)$ is a
point inside the first of the obstacles. We have taken $k=1$ in all examples.

\paragraph{Experiment \#1 (indirect method).} After parametrization of the
ellipses, a double layer potential \eqref{eq:1.13} is defined on each of the
curves. They are then used to set up a $2\times 2$ system of parametrized
boundary integral equations, with diagonal terms of the form \eqref{eq:1.1} and
integral operators with smooth kernels as off--diagonal terms. We solve the
system and plug the resulting densities in the fully discrete double layer
potentials \eqref{eq:1.17}. We compute the errors:
\[
e_h:=\max_{\mathbf z \in \mathrm O} |U(\mathbf z)-U_h(\mathbf z)|,
\quad\mbox{where}\quad \mathrm O:=\{(−4,− 4), (−5, − 5.5), (−6 ,− 7), (7, 7.6),
(−6.8, − 6)\},
\]
that is, we observe the difference of the exact and discrete solutions at five
external points. We expect $e_h=\mathcal O(h)$ (this follows from Theorem
\ref{the:5.1}) and $e_h=\mathcal O(h^2)$ when $\varepsilon\in \{-1/6,1/6\}$
(Corollary \ref{cor:5.2}). The results are shown in Table \ref{table:1}. To see
how the superconvergent values of $\varepsilon$ are reflected in the error, we
plot the error $e_h$ as a function of $\varepsilon$ for a fixed value of $N$ in
Figure \ref{figure:1}.

\begin{table}[htb]
\begin{center}
\begin{tabular}{rcc}\hline
 $N$&error&e.c.r\\[0.5ex] \hline
10&4.3005$E(-002)$&\\ 
20&1.9193$E(-002)$&1.1640\\ 
40&9.0917$E(-003)$&1.0779\\
80&4.4279$E(-003)$&1.0379\\
160&2.1852$E(-003)$&1.0189\\
320&1.0855$E(-003)$&1.0094\\
640&5.4097$E(-004)$&1.0047\\
\hline
\end{tabular}\qquad
\begin{tabular}{rcc}\hline
$N$&error&e.c.r\\[0.5ex] \hline
10&9.7262$E(-003)$&\\ 
20&2.5602$E(-003)$&1.8995\\ 
40&6.2157$E(-004)$&2.0645\\
80&1.5443$E(-004)$&2.0090\\
160&3.8588$E(-005)$&2.0007\\
320&9.6507$E(-006)$&1.9995\\
640&2.4135$E(-006)$&1.9995\\
\hline
\end{tabular}
\end{center}
\caption{Errors and estimated convergence rates for Experiment \#1. The variable
$N$ is the number of points on each of the curves. The leftmost table
corresponds to $\varepsilon=1/3$ (order one) and the rightmost table to
$\varepsilon=1/6$.}\label{table:1}
\end{table}  

\begin{figure}[htb]
\begin{center}
\includegraphics[width=10cm]{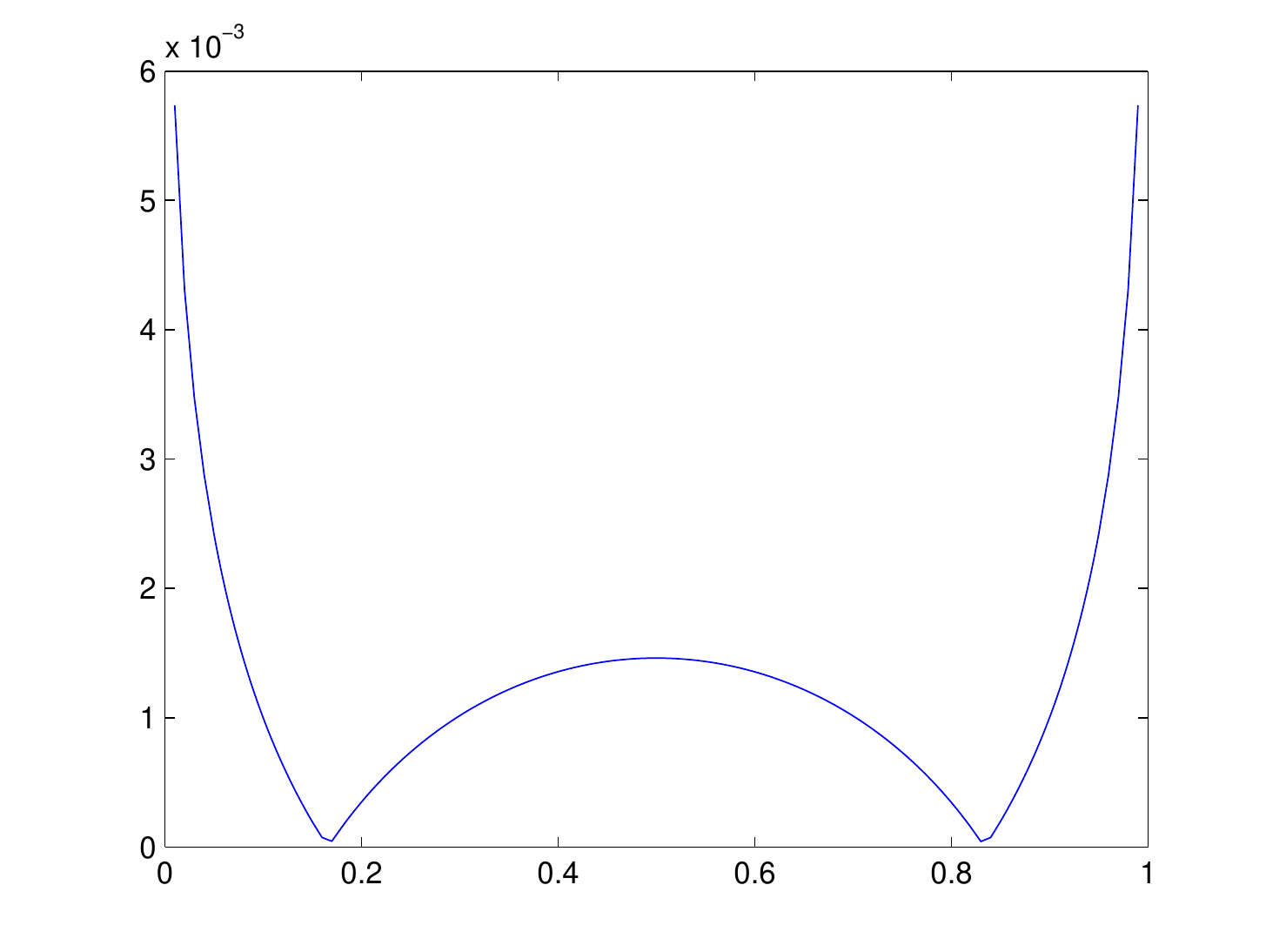}
\end{center}
\caption{Error as a function of $\varepsilon$ in Experiment \# 1. The
superconvergent methods can be clearly seen as kinks in the error graph
(corresponding to the first term in the asymptotic expansion of the error going
through a zero). The methods becomes unstable as $\varepsilon \to \mathbb Z$.
Although our analysis does not cover this case, it is clear from the graph that
$\varepsilon=1/2$ continues smoothly the error graph.}\label{figure:1}
\end{figure}

\paragraph{Experiment \#2 (Richardson extrapolation).} With the same geometric
configuration, exact solution, and numerical scheme as in the superconvergent
case ($\varepsilon=1/6$), we apply Richardson extrapolation to propose the
potential
\[
U_h^\star:=\smallfrac43 U_{h/2}-\smallfrac13 U_h,
\]
as an improved approximation of the solution. The result of Proposition
\ref{prop:A.4} points clearly to the existence of an asymptotic expansion of the
error, very much in the style of those obtained for operator equations of zero
or negative order in \cite{CeDoSa:2002}. The numerical result shown in Table
\ref{table:2} corresponding maximum errors $e_h:=\max_{\mathbf z \in \mathrm O}
|U(\mathbf z)-U_h(\mathbf z)|=\mathcal O(h^3).$

\begin{table}[htb]
\begin{center}
\begin{tabular}{rcc}\hline
$ N$&error&e.c.r\\[0.5ex] \hline
10&4.3437$E(-006)$&\\ 
20&7.4235$E(-008)$&5.8707\\ 
40&5.6231$E(-009)$&3.7227\\
80&6.6107$E(-010)$&3.0885\\
160&8.1033$E(-011)$&3.0282\\
320&1.0052$E(-011)$&3.0110\\
\hline
\end{tabular}
\end{center}
\caption{Errors and estimated convergence rates for Experiment \#2: Richardson
extrapolation applied to one of the surperconvergent methods
($\varepsilon=1/6$). Errors are computed in several external observation points.
The result reported with $N=20$ uses a grid of $20$ points as $h-$grid and a
refined grid of $40$ points as $h/2-$grid.}\label{table:2}
\end{table}

\paragraph{Experiment \#3 (direct method).} We now apply a direct boundary
integral equation method for the same exterior Neumann problem as in the
previous experiments. This leads to a $2\times 2$ system with the same matrix of
operators as in the previous formulation, but  the adjoint double layer operator
appears in the right-hand side of the system. This operator is simply
discretized with midpoint formulas on each of the intervals: see
\cite{DoRaSa:2008} for a similar treatment in systems related to the single
layer potential. With this formulation, the unknown is the parametrized form of
the trace of the exact solution $\varphi=U\circ\mathbf x$ and we can thus
compare $L^\infty$ errors (Theorems \ref{the:5.3} and \ref{the:5.4}). We measure
maximum absolute value of errors for $\varphi$ on the points $t_j$. The results
are reported in Table \ref{table:3}.

\begin{table}[htb]  
\begin{center}
\begin{tabular}{rcccc}\hline
 $N$&boundary error&e.c.r\\[0.5ex] \hline
10&4.8555$E(-001)$&\\ 
20&1.3426$E(-001)$&1.8546\\ 
40&5.4891$E(-002)$&1.2904\\
80&2.4641$E(-002)$&1.1555\\
160&1.1792$E(-002)$&1.0632\\
320&5.7677$E(-003)$&1.0318\\
640&2.8527$E(-003)$&1.0157\\
\hline
\end{tabular}
\qquad
\begin{tabular}{rcc}\hline
 $N$&boundary error&e.c.r\\[0.5ex] \hline
10&2.0406$E(-001)$&\\ 
20&3.5629$E(-002)$&2.5179\\ 
40&8.6392$E(-003)$&2.0441\\
80&2.1497$E(-003)$&2.0068\\
160&5.3603$E(-004)$&2.0037\\
320&1.3385$E(-004)$&2.0017\\
640&3.3444$E(-005)$&2.0008\\
\hline
\end{tabular}\end{center}
\caption{Errors and estimated convergence rates for Experiment \#3. The leftmost
table corresponds to $\varepsilon=1/3$ (order one) and the rightmost table to
$\varepsilon=1/6$. The table shows errors
$\|\varphi_h-I_h\varphi\|_{L^\infty}$.}\label{table:3}
\end{table}  

\paragraph{Experiment \#4 (condition numbers).} In this final experiment, we
pick the matrix of the previous examples and compute its spectral condition
number. We then show how a Calder\'on preconditioner based on premultiplying the
matrix $\mathrm W_{ij}$ by a matrix \cite{CeDoSa:2002, DoRaSa:2008}
\begin{equation}\label{eq:B.1}
\mathrm V_{ij} = H^{(1)}_0 (k|\mathbf x(t_i)-\mathbf x(t_{j+\varepsilon})|)
\end{equation}
reduces the condition number of the resulting system to what is basically a
constant $h-$independent condition number.

\begin{table}[htb]
\begin{center}
\begin{tabular}{rcr}\hline
 N&cond VW&cond W\\[0.5ex] \hline
10&6.9548&5.7212\\ 
20&6.5994&11.7992\\ 
40&6.5349&23.7403\\
80&6.5196&47.5489\\
160&6.5159&95.1320\\
320&6.5150&190.2811\\
640&6.5148&380.5709\\
\hline
\end{tabular}
\end{center}  
\caption{Condition numbers for the matrix $\mathrm W$ 
of Experiments \# 1  and \# 2 and for the matrix $\mathrm V\mathrm W$, with
$\mathrm V$ given by \eqref{eq:B.1}.}\label{table:4}
\end{table}

\appendix

\section{Second order asymptotics}\label{sec:A}

This section contains the proof of Proposition \ref{prop:A.4}. Note that this
result is required for the proof of $L^\infty$ convergence of the
superconvergent methods. In order to prove Proposition \ref{prop:A.4} we have to
go one term further in the different asymptotic expansions that were used in the
proofs of Propositions \ref{prop:4.5} and \ref{prop:4.6}.
\subsection{Technical background}

\begin{lemma}\label{lemma:A.1} 
There exists $C$ such that for all $\varepsilon$ and $h$
\[
\Big| \{ Q_{h,\varepsilon}^{-1}\psi_h, u\}-(\psi_h,u)+\smallfrac{h^2}{24} \{
Q_{h,\varepsilon}^{-1}\psi_h,u''\}\Big| \le C h^3\| \psi_h\|_0\| u\|_3 \qquad
\forall \psi_h\in S_{h,\varepsilon},\, u \in H^3.
\]
\end{lemma}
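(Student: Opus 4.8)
The plan is to reduce everything to a one-dimensional quadrature estimate on a single subinterval and then sum. Writing $\psi_h=\sum_{j=0}^{N-1}\psi_j\chi_{j+\varepsilon}$, both $\{Q_{h,\varepsilon}^{-1}\psi_h,u\}$, $(\psi_h,u)$ and $\{Q_{h,\varepsilon}^{-1}\psi_h,u''\}$ split as $\sum_j \psi_j$ times a local quantity: indeed $\{Q_{h,\varepsilon}^{-1}\psi_h,v\}=h\sum_j\psi_j v(t_{j+\varepsilon})$ and $(\psi_h,v)=\sum_j\psi_j\int_{t_{j+\varepsilon}-h/2}^{t_{j+\varepsilon}+h/2}v(t)\,\mathrm dt$. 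So the bracketed expression equals $\sum_j\psi_j\, r_j$ with
\[
r_j:=h\,u(t_{j+\varepsilon})-\int_{t_{j+\varepsilon}-h/2}^{t_{j+\varepsilon}+h/2}u(t)\,\mathrm dt+\smallfrac{h^3}{24}u''(t_{j+\varepsilon}).
\]
This is precisely the remainder of the midpoint rule on $(c-h/2,c+h/2)$ \emph{after} subtracting its leading $h^3$ term. By Cauchy--Schwarz, $|\sum_j\psi_j r_j|\le (h\sum_j|\psi_j|^2)^{1/2}(h^{-1}\sum_j|r_j|^2)^{1/2}=\|\psi_h\|_0\,(h^{-1}\sum_j|r_j|^2)^{1/2}$, so it remains to show $h^{-1}\sum_j|r_j|^2\le C h^6\|u\|_3^2$, i.e. an $h^{7/2}$-type bound on $r_j$ in $\ell^2$.

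Next I would estimate $r_j$ by Taylor expansion with integral remainder. The midpoint rule satisfies $\int_{c-h/2}^{c+h/2}u=hu(c)+\smallfrac{h^3}{24}u''(c)+\rho(c)$, where the classical Peano kernel representation gives $\rho(c)=\int_{c-h/2}^{c+h/2}K_h(c,t)\,u'''(t)\,\mathrm dt$ for a kernel $K_h$ supported on that interval with $\|K_h(c,\cdot)\|_{L^\infty}\le C h^3$. Hence $|r_j|=|\rho(t_{j+\varepsilon})|\le C h^3\int_{I_j}|u'''(t)|\,\mathrm dt$ with $I_j=(t_{j+\varepsilon}-h/2,t_{j+\varepsilon}+h/2)$, and by Cauchy--Schwarz on $I_j$, $|r_j|^2\le C h^6\cdot h\int_{I_j}|u'''|^2=C h^7\int_{I_j}|u'''|^2$. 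Summing over $j$ (the $I_j$ tile $[0,1]$) yields $\sum_j|r_j|^2\le C h^7\|u'''\|_0^2\le C' h^7\|u\|_3^2$, so $h^{-1}\sum_j|r_j|^2\le C' h^6\|u\|_3^2$, which is exactly what is needed. Combining with the Cauchy--Schwarz step above gives the claimed bound with a constant independent of $\varepsilon$ and $h$ (the estimates are all translation-invariant in $c$, so $\varepsilon$ never enters).

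The only mildly delicate point is that the statement is phrased for $u\in H^3$ (periodic Sobolev), not $u\in C^3$, so the pointwise Taylor/Peano argument must be justified by density: prove the estimate for $u\in\mathcal D$ (where all computations above are literal), observe that both sides are continuous in the $H^3$ norm — the left side because $\{\delta_{t_{j+\varepsilon}},\cdot\}$ and $\{\delta_{t_{j+\varepsilon}},(\cdot)''\}$ are bounded functionals on $H^3$ by the Sobolev embedding $H^3\hookrightarrow C^2$, uniformly in $j$ after the $\ell^2$ packaging, and the $(\psi_h,\cdot)$ term is bounded on $H^0\supset H^3$ — and conclude by $\mathcal D$ being dense in $H^3$. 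I expect the main obstacle to be merely bookkeeping: being careful that the Peano kernel bound $\|K_h(c,\cdot)\|_\infty\le Ch^3$ is uniform in $c$ and that the subtraction of the $\smallfrac{h^3}{24}u''$ term is what kills the leading error so that the genuine order is $h^{7/2}$ in $\ell^2$ rather than $h^{5/2}$. Everything else is the standard midpoint-rule Euler--Maclaurin analysis already used repeatedly in Sections \ref{sec:3} and \ref{sec:4}.
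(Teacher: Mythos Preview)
Your proof is correct and follows essentially the same approach as the paper: the paper's proof is a one-line sketch pointing back to Lemma~\ref{lemma:4.1}, replacing the midpoint remainder bound there by the higher-order inequality
\[
\Big|\int_{c-\frac{h}2}^{c+\frac{h}2} u(t)\,\mathrm dt - h u(c) - \smallfrac{h^3}{24}u''(c)\Big|\le C h^3 \int_{c-\frac{h}2}^{c+\frac{h}2} |u^{(3)}(t)|\,\mathrm dt,
\]
which is exactly your Peano-kernel estimate for $r_j$. The only cosmetic difference is that the paper (following the template of Lemma~\ref{lemma:4.1}) would sum the local bounds directly as $\sum_j|\psi_j|\,Ch^3\int_{I_j}|u'''|=Ch^3\int_0^1|\psi_h|\,|u'''|\le Ch^3\|\psi_h\|_0\|u'''\|_0$, whereas you apply Cauchy--Schwarz first on the $j$-sum and then on each $I_j$; both routes yield the same bound.
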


\begin{proof}
It is based on the same ideas as the proof of Lemma \ref{lemma:4.1}, using the
inequality
\[
\Big|\int_{c-\frac{h}2}^{c+\frac{h}2} u(t)\mathrm d t-hu(c)-\frac{h^3}{24}
u''(c)\Big|\le C h^3 \int_{c-\frac{h}2}^{c+\frac{h}2} |u^{(3)}(t)|\mathrm d t,
\]
as starting point.
\end{proof}

\begin{proposition}\label{prop:A.2}
Let $a_1,a_2,a_3\in \mathcal D$ and consider an operator
\[
\mathrm V:=\boldsymbol a_1\mathrm H\mathrm D_{-1}+\boldsymbol a_2\mathrm
H\mathrm D_{-2}+\boldsymbol a_3\mathrm H\mathrm D_{-3}+\mathrm K_4 \qquad
\mbox{where $\mathrm K_4\in \mathcal E(-4)$}.
\]
Let then $\mathrm L_1:=\boldsymbol a_1$, $\mathrm L_2:=\boldsymbol a_1\mathrm
D-\boldsymbol a_2$, $\mathrm L_3:=\boldsymbol a_1\mathrm D^2-2\boldsymbol
a_2\mathrm D+\boldsymbol a_3,$
 and consider the operators
\begin{eqnarray*}
R_h^{\#} u &:=& \mathrm V u-\mathrm V Q_h^{-1} F_h u + \sum_{\ell=1}^2 h^\ell
\underline C_\ell(\punto/h)\mathrm L_\ell F_h u,\\
T_h^{\#} u &:=& \mathrm V u-\mathrm V Q_{h,1/2}^{-1} F_h u +\sum_{\ell=1}^3
h^\ell \underline C_\ell(\punto/h+1/2)\mathrm L_\ell F_h u . \end{eqnarray*}
Then
\begin{eqnarray}\label{eq:A.1}
\|R_h^{\#} u\|_0+h\| R_h^{\#} u\|_1&\le & C h^3 \|u\|_2 \qquad \forall u \in
H^2,\\
\|T_h^{\#} u\|_0+h\| T_h^{\#} u\|_1&\le & C h^4 \|u\|_3 \qquad \forall u \in
H^3.
\label{eq:A.2}
\end{eqnarray}
\end{proposition}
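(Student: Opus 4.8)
The plan is to prove Proposition~\ref{prop:A.2} exactly as Proposition~\ref{prop:4.3} (that is, \cite[Proposition~16]{CeDoSa:2002}) is proved, but retaining one more term in every asymptotic expansion. Write $\mathrm V=\sum_{\ell=1}^{3}\boldsymbol a_\ell\mathrm H\mathrm D_{-\ell}+\mathrm K_4$ with $\mathrm K_4\in\mathcal E(-4)$. The first two pieces need no new work. Since $\mathrm V\in\mathcal E(-1)$, estimate \eqref{eq:4.2} gives $\|\mathrm V(u-F_hu)\|_0+h\|\mathrm V(u-F_hu)\|_1\le Ch^{r+1}\|u\|_r$, which for $r=2$ (resp.\ $r=3$) already has the size required in \eqref{eq:A.1} (resp.\ \eqref{eq:A.2}). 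For the smoothing remainder, a direct Fourier estimate of the aliasing error of the midpoint rule on trigonometric polynomials gives $\|F_hu-Q_h^{-1}F_hu\|_{-3}\le Ch^{3}\|u\|_0$ (only the modes $m-pN$ with $m\in\Lambda_N$, $p\neq0$ occur, and $|m-pN|\ge(|p|-\tfrac12)N$), so $\mathrm K_4\in\mathcal E(-4)$ makes $\mathrm K_4(F_hu-Q_h^{-1}F_hu)$ as well as $\mathrm K_4(u-F_hu)$ of size $\mathcal O(h^{4}\|u\|_0)$ in both $H^0$ and $H^1$, which is negligible. Thus the problem reduces to analysing the three monomials $\boldsymbol a_\ell\mathrm H\mathrm D_{-\ell}(F_hu-Q_h^{-1}F_hu)$, $\ell=1,2,3$, and their staggered analogues.

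The heart of the proof is the Fourier computation of those monomials. The aliasing identity $\widehat{Q_h^{-1}v}(n)=\sum_{p\in\mathbb Z}\widehat v(n+pN)$ yields $F_hu-Q_h^{-1}F_hu=-\sum_{m\in\Lambda_N}\sum_{p\neq0}\widehat u(m)\,\phi_{m-pN}$. Applying $\mathrm H\mathrm D_{-\ell}$, using $\mathrm{sign}(m-pN)=-\mathrm{sign}(p)$ (valid because $|m|\le N/2$) and writing $\phi_{m-pN}=\phi_m\,\phi_{-pN}$ with $\phi_{-pN}(t)=e^{-2\pi\imath pt/h}$, I would expand the symbol $(2\pi\imath(m-pN))^{-\ell}=(2\pi\imath)^{-\ell}(-pN)^{-\ell}\bigl(1-\tfrac{m}{pN}\bigr)^{-\ell}$ in its binomial series (convergent since $|m/(pN)|\le\tfrac12$), resum the $p$-series through the Fourier expansions of the functions $\underline C_j$, and recognise $(2\pi\imath m)^{k}\widehat u(m)\phi_m=\widehat{\mathrm D^{k}F_hu}(m)\phi_m$. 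The outcome is that the $\ell$-th monomial equals a series $\sum_{j\ge\ell}h^{j}\,\underline C_j(\punto/h)\,\boldsymbol a_\ell\,c_{j,\ell}\,\mathrm D^{\,j-\ell}F_hu$ for explicit scalars $c_{j,\ell}$. Summing over $\ell$ and collecting by power of $h$, the $h^{1}$ and $h^{2}$ coefficients assemble precisely into $\underline C_1(\punto/h)\,\mathrm L_1F_hu$ and $\underline C_2(\punto/h)\,\mathrm L_2F_hu$ with $\mathrm L_1=\boldsymbol a_1$, $\mathrm L_2=\boldsymbol a_1\mathrm D-\boldsymbol a_2$ (and, for $T_h^{\#}$, the $h^{3}$ coefficient into $\mathrm L_3=\boldsymbol a_1\mathrm D^2-2\boldsymbol a_2\mathrm D+\boldsymbol a_3$). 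For $T_h^{\#}$ the only change is that the nodes $jh$ become $(j+\tfrac12)h$, which inserts a factor $(-1)^{p}$ in the aliasing sum and hence turns each $\underline C_j(\punto/h)$ into $\underline C_j(\punto/h+1/2)$; this is the origin of the shifted functions in the statement.

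It remains to bound the tails. In the resummation I would keep only the binomial terms contributing below order $h^{M}$ ($M=3$ for $R_h^{\#}$, $M=4$ for $T_h^{\#}$) and estimate the Taylor remainder of the binomial series by $C\,|m/(pN)|^{M-\ell}$ on $|m/(pN)|\le\tfrac12$ (when $\ell\ge M$ the whole monomial is already $\mathcal O(h^{\ell})\le\mathcal O(h^{M})$). Carrying this through, the leftover coming from the $\ell$-th monomial has the form $h^{M}\,\underline C(\punto/h)\,\boldsymbol a_\ell\,\mathrm S_\ell F_hu$, where $\underline C$ has uniformly bounded sup-norm (its Fourier coefficients $\mathrm{sign}(p)(2\pi\imath p)^{-M}$ are absolutely summable) and $\mathrm S_\ell$ is an operator of order $M-\ell$. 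Since $\|\mathrm S_\ell F_hu\|_0\le C\|F_hu\|_{M-\ell}\le C\|u\|_{M-1}$ and, by an inverse inequality on the trigonometric polynomial $F_hu$, $\|\mathrm S_\ell F_hu\|_1\le Ch^{-1}\|u\|_{M-1}$, the tail contributes $\mathcal O(h^{M}\|u\|_{M-1})$ both in $H^0$ and (after multiplication by $h$) in $H^1$. Together with the reductions of the first paragraph this gives \eqref{eq:A.1}--\eqref{eq:A.2}.

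The one genuinely delicate point is the bookkeeping in the middle paragraph: one has to track simultaneously, across the three monomials and their binomial expansions, exactly which contributions land at orders $h^{1}$, $h^{2}$, $h^{3}$, and check that they coalesce into the stated $\mathrm L_1,\mathrm L_2,\mathrm L_3$; and one must make the Taylor-remainder truncation rigorous as an estimate in the periodic Sobolev norms, which is precisely where the inequality $|m|\le N/2$ (giving both convergence of the series and geometric decay of the tail) does the work. Everything else is the standard symbol calculus of \cite[Chapter~6]{SaVa:2002} and \cite[Proposition~16]{CeDoSa:2002}, run with one extra term retained.
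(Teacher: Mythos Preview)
Your proposal is correct and follows exactly the route the paper takes: the paper's own proof is simply the one-line citation ``It is a direct consequence of \cite[Proposition 16]{CeDoSa:2002}'', and what you have written is a faithful unpacking of that cited argument with one additional term retained in each expansion. The only minor imprecision is the phrase ``of size $\mathcal O(h^{4}\|u\|_0)$ in both $H^0$ and $H^1$'' for the $\mathrm K_4$ contribution: in $H^1$ it is actually $\mathcal O(h^{3}\|u\|_0)$, but since the statement bounds $\|\cdot\|_0+h\|\cdot\|_1$ this still yields $\mathcal O(h^{4}\|u\|_0)$ overall, so the conclusion is unaffected.
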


\begin{proof}
It is a direct consequence of \cite[Proposition 16]{CeDoSa:2002}.
\end{proof}

\begin{proposition}\label{prop:A.3}
Let $E_h^{\#} u:= u -D_h u +\sum_{\ell=1}^2 h^\ell \underline
B_\ell(\punto/h+1/2)F_h u^{(\ell)}.$
Then
\[
\| E_h^{\#}u\|_0+ h\|E_h^{\#} u\|_1\le C h^3 \|u\|_3 \qquad \forall u \in H^3.
\]
\end{proposition}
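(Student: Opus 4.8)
The strategy is the Fourier-analytic one behind Proposition~\ref{prop:4.4} (whose $H^0$ estimate is \cite[Proposition~1]{DoSa:2001} and whose $H^1$ estimate follows the lines of \cite[Proposition~16]{CeDoSa:2002}), carried one order further. Everything is read off from Fourier coefficients, using that the spline--trigonometric projector $D_h$ onto $S_h$ acts by aliasing. Writing $\mu(m):=\widehat{\chi_0}(m)=\sin(\pi m h)/(\pi m)$ (with $\mu(0)=h$), one has $\widehat{\chi_j}(m)=\phi_m(t_j)\,\mu(m)$, so the Fourier coefficient at frequency $m$ of any element of $S_h$ is $\mu(m)$ times an $N$-periodic function of $m$. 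Since $\widehat{D_h u}(m')=\widehat u(m')$ for $m'\in\Lambda_N$ and $\mu(m')\neq0$ there, this forces
\[
\widehat{D_h u}(m'+kN)=\frac{\mu(m'+kN)}{\mu(m')}\,\widehat u(m')=(-1)^k\,\frac{m'h}{m'h+k}\,\widehat u(m'),\qquad m'\in\Lambda_N,\ k\in\mathbb Z,
\]
where we used $Nh=1$ and $\sin(\pi(m'+kN)h)=(-1)^k\sin(\pi m'h)$.

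Next I would record the Fourier content of the two correction terms. As a $1$-periodic function, $\underline B_\ell(\punto/h+1/2)$ is $h$-periodic, so its Fourier coefficients are supported on $N\mathbb Z$; by the classical Fourier expansion of the Bernoulli polynomials (and consistently with \eqref{eq:4.30} for $\ell=1$) the coefficient at $kN$ equals $(-1)^k(-1)^{\ell+1}(2\pi\imath k)^{-\ell}$, and in particular the mean vanishes. Since $F_h u^{(\ell)}$ is band-limited to $\Lambda_N$ with $\widehat{F_h u^{(\ell)}}(m')=(2\pi\imath m')^\ell\widehat u(m')$, convolution gives, for $m'\in\Lambda_N$,
\[
h^\ell\,\widehat{\big(\underline B_\ell(\punto/h+1/2)\,F_h u^{(\ell)}\big)}(m'+kN)=(-1)^k(-1)^{\ell+1}\Big(\frac{m'h}{k}\Big)^{\ell}\widehat u(m')\qquad(k\neq0),
\]
and $0$ for $k=0$. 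The normalizations of $\underline B_1$ and $\underline B_2$ are precisely those for which these two terms reproduce the degree-one and degree-two Taylor coefficients in $x:=m'h$ of the aliasing factor $x/(x+k)$ appearing above; hence the coefficients of $E_h^{\#}u$ on $\Lambda_N$ vanish, while for $k\neq0$ the bracketed terms telescope and
\[
\widehat{E_h^{\#}u}(m'+kN)=\widehat u(m'+kN)-(-1)^k\,\frac{x^3}{k^2(k+x)}\,\widehat u(m'),\qquad x=m'h,
\]
a genuinely third-order, uniformly rational remainder.

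It then remains to bound $\|E_h^{\#}u\|_0$ and $\|E_h^{\#}u\|_1$, splitting each squared norm into a ``tail'' piece from $\widehat u(m'+kN)$ and a ``remainder'' piece from $(-1)^k\widehat u(m')\,x^3/(k^2(k+x))$. In the tail all frequencies satisfy $|m'+kN|>N/2$, so $|m'+kN|^{-6}\le(2h)^6$ turns $\|u\|_3^2$ into an $h^6$ gain for the $H^0$ count; for the $H^1$ count one uses $\tfrac12|k|N\le|m'+kN|\le\tfrac32|k|N$ $(k\neq0)$ to trade the extra weight $m^2$ for $k^2h^{-2}$, which costs one power of $h$ and leaves an $h^4$ gain. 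In the remainder, $|x|=|m'|h\le\tfrac12$ guarantees $|k+x|\ge\tfrac12|k|$ for every $k\neq0$, so $\sum_{k\neq0}k^{-4}|k+x|^{-2}\le 8\zeta(6)$ (and $\sum_{k\neq0}k^{-2}|k+x|^{-2}\le 8\zeta(4)$ for the $H^1$ count) uniformly in $x$, while $x^6=|m'|^6h^6$ pairs with $\|u\|_3^2$; again this is $O(h^6)$ in $H^0$ and $O(h^4)$ in $H^1$. Adding the two pieces gives $\|E_h^{\#}u\|_0\le Ch^3\|u\|_3$ and $\|E_h^{\#}u\|_1\le Ch^2\|u\|_3$, which is the claim. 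The only genuinely delicate point is this uniform-in-$k$ summability of the remainder series near $k=\pm1$ as $x$ approaches $\pm\tfrac12$ (together with the harmless bookkeeping of the single endpoint frequency $m'=N/2$ contained in $\Lambda_N$); everything else is the routine computation already carried out one order lower in \cite[Proposition~1]{DoSa:2001} and the proof of \cite[Proposition~16]{CeDoSa:2002}.
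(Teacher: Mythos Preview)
Your proposal is correct and follows exactly the Fourier-analytic route that the paper's own proof defers to (namely \cite[Proposition~1]{DoSa:2001} for the $H^0$ bound and the argument of \cite[Proposition~16]{CeDoSa:2002} for the $H^1$ bound), carried one order further. Your explicit computation of the aliasing coefficients of $D_h u$, the Fourier content of the $\underline B_\ell(\punto/h+1/2)$ corrections, and the resulting third-order remainder $x^3/(k^2(k+x))$ is precisely the mechanism behind those cited results, so there is nothing to add.
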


\begin{proof} See \cite[Proposition 1]{DoSa:2001} and the proof of
\cite[Proposition 16]{CeDoSa:2002}.
\end{proof}

\subsection{Proof of Proposition \ref{prop:A.4}}

Following \eqref{eq:4.4} and \eqref{eq:4.11}, we consider the decomposition of
the consistency error in five terms 
\begin{equation}\label{eq:A.3}
w_h(\psi_h,\varphi_h-D_h\varphi)=w_h(\psi_h,\varphi_h-\varphi)+w_h(\psi_h,
\varphi-D_h\varphi)= (T_1+T_2+T_3)+(T_4+T_5).
\end{equation}
To bound $T_1$ we use Lemma \ref{lemma:A.1} with $u=\mathrm D\mathrm V_1\mathrm
D$:
\begin{equation}\label{eq:A.4}
\Big| T_1-\smallfrac{h^2}{24}\{ Q_{h,\varepsilon}^{-1}\psi_h,\mathrm D^3\mathrm
V_1\mathrm D\varphi\}\Big|\le C h^3 \|\psi_h\|_0\|\varphi\|_4.
\end{equation}
Proceeding as in \eqref{eq:4.6} we can bound
\begin{equation}\label{eq:A.5}
|T_2| \le C h^3 \|\psi_h\|_0\|\varphi\|_3.
\end{equation}
To expand $T_3$ we use Proposition \ref{prop:A.2} applied to $\mathrm V_2$ and
\eqref{eq:4.2} to obtain
\begin{eqnarray*}
T_3 &\!\!=\!\!& - h\underline C_1(\varepsilon) (Q_{h,\varepsilon}^{-1}
\psi_h,\boldsymbol a\varphi)  - h^2\underline C_2(\varepsilon)
(Q_{h,\varepsilon}^{-1} \psi_h,(\boldsymbol a\mathrm D -\boldsymbol b)\varphi)\\
& &  + \underbrace{h\underline C_1(\varepsilon) (Q_{h,\varepsilon}^{-1}
\psi_h,\boldsymbol a(\varphi-F_h\varphi))}_{T_{3a}}  + \underbrace{h^2\underline
C_2(\varepsilon) (Q_{h,\varepsilon}^{-1} \psi_h,(\boldsymbol a\mathrm D
-\boldsymbol b)(\varphi-F_h\varphi))}_{T_{3b}}\\
& & +\underbrace{ (Q_{h,\varepsilon}^{-1}\psi_h,R_h^{\#}\varphi)}_{T_{3c}}.
\end{eqnarray*}
We now use Lemma \ref{lemma:4.1}(b) and \eqref{eq:4.2} to bound $|T_{3a}|
+|T_{3b}|\le C h^3 \|\psi_h\|_0\|\varphi\|_2,$
as well as Lemma \ref{lemma:4.1}(b) and \eqref{eq:A.1} to bound
$|T_{3c}| \le C h^3 \|\psi_h\|_0\|\varphi\|_2.$
Therefore
\begin{equation}\label{eq:A.6}
|T_3 + h\underline C_1(\varepsilon) (Q_{h,\varepsilon}^{-1} \psi_h,\boldsymbol
a\varphi)  + h^2\underline C_2(\varepsilon) (Q_{h,\varepsilon}^{-1}
\psi_h,(\boldsymbol a\mathrm D -\boldsymbol b)\varphi)|\le C
h^3\|\psi_h\|_0\|\varphi\|_2.
\end{equation}
To expand $T_4$ we use Proposition \ref{prop:A.2} applied to $\mathrm V_1$. Note
that $\mathrm L_1=\alpha\mathrm I$, $\mathrm L_2=\alpha\mathrm D-\boldsymbol c$
and $\mathrm L_3\in \mathcal E(2)$. Because of \eqref{eq:4.12}, we can write
\begin{eqnarray*}
T_4 &=& \sum_{\ell=1}^2 h^\ell \underline C_\ell(\varepsilon) (
Q_{h,\varepsilon}^{-1}\psi_h,\mathrm D \mathrm L_\ell \mathrm D\varphi)\\
& & +\sum_{\ell=1}^2  \underline
C_\ell(\varepsilon)\Big(\underbrace{h^\ell(\psi_h-Q_{h,\varepsilon}^{-1}\psi_h,
\mathrm D \mathrm L_\ell\mathrm D \varphi)}_{T_{4a}^\ell}+\underbrace{h^\ell
(\psi_h',\mathrm L_\ell (\varphi'-F_h\varphi'))}_{T_{4b}^\ell}\Big)\\
& & - \underbrace{ h^3 \underline C_3(\varepsilon) (\psi_h',\mathrm L_3
F_h\varphi')}_{T_{4c}}+\underbrace{(\psi_h',T_h^{\#}\varphi')}_{T_{4d}}.
\end{eqnarray*} 
Using Lemma \ref{lemma:4.1}(a) and \eqref{eq:4.40} we can bound
\[
|T_{4a}^1|+|T_{4a}^2| \le C h^3 \|\psi_h\|_0 \left( \|\mathrm D \mathrm
L_1\mathrm D\varphi\|_2+\|\mathrm D\mathrm L_2\mathrm D\varphi\|_1\right)\le C'
h^3\|\psi_h\|_0 \|\varphi\|_4.
\] 
By \eqref{eq:4.2} (and using the commutation $\mathrm D F_h=F_h\mathrm D$ to
simplify some expressions) we next bound
\begin{eqnarray*}
|T_{4b}^1|+|T_{4b}^2| &\le& \|\psi_h\|_0 \Big( h \|\mathrm D \mathrm L_1 \mathrm
D (\varphi-F_h\varphi)\|_0+ h^2  \|\mathrm D \mathrm L_2 \mathrm D
(\varphi-F_h\varphi)\|_0 \Big) \\
&\le & C \|\psi_h\|_0 \Big( h \| \varphi-F_h\varphi\|_2+h^2\|
\varphi-F_h\varphi\|_3\Big)\le C' h^3 \|\psi_h\|_0\|\varphi\|_4.
\end{eqnarray*}
Similarly
\[
|T_{4c}|\le h^3|\underline C_3(\varepsilon)| \|\psi_h\|_0 \| \mathrm D \mathrm
L_3\mathrm D F_h\varphi\|_0\le C\, h^3 \|\psi_h\|_0\|\varphi\|_4.
\]
Finally, by \eqref{eq:A.2}
\[
| T_{4d}| \le \|\psi_h\|_0 \| \mathrm D T_h^{\#} \varphi'\|_0 \le C h^3
\|\psi_h\|_0 \|\varphi\|_4.
\]
Collecting all these bounds we have just proved that
\begin{equation}\label{eq:A.7}
\Big|T_4 - \sum_{\ell=1}^2 h^\ell \underline C_\ell(\varepsilon) (
Q_{h,\varepsilon}^{-1}\psi_h,\mathrm D \mathrm L_\ell \mathrm D\varphi)
\Big|\le C h^3\|\psi_h\|_0\|\varphi\|_4.
\end{equation}
We are only left to deal with $T_5$. Using Proposition \ref{prop:A.3}, the
argument in \eqref{eq:4.50}, and the fact that $\underline B_2(1/2)=-1/24$, we
can write
\begin{eqnarray*}
Q_h^{-1}(\varphi-D_h\varphi)&=& -h \underline B_1(1/2) Q_h^{-1} F_h \varphi'-h^2
\underline B_2(1/2) Q_h^{-1} F_h\varphi''+Q_h^{-1} E_h^{\#}\varphi\\
&=& \frac{h^2}{24} Q_h^{-1} F_h \varphi''+Q_h^{-1} E_h^{\#}\varphi.
\end{eqnarray*}
Therefore,
\begin{eqnarray*}
T_5 &=& \frac{h^2}{24} \{ Q_{h,\varepsilon}^{-1}\psi_h,\mathrm V_2Q_h^{-1} F_h
\varphi''\} + \{ Q_{h,\varepsilon}^{-1}\psi_h,\mathrm V_2Q_h^{-1}
E_h^{\#}\varphi\}\\
&=& \frac{h^2}{24} ( Q_{h,\varepsilon}^{-1}\psi_h,\mathrm V_2 \varphi'')\\
& & +\underbrace{\frac{h^3}{24} \underline C_1(\varepsilon) (
Q_{h,\varepsilon}^{-1}\psi_h,\boldsymbol a
F_h\varphi'')}_{T_{5a}}-\underbrace{\frac{h^2}{24}
(Q_{h,\varepsilon}^{-1}\psi_h,R_h\varphi'')}_{T_{5b}}
+\underbrace{\{ Q_{h,\varepsilon}^{-1}\psi_h,\mathrm V_2Q_h^{-1}
E_h^{\#}\varphi\} }_{T_{5c}},
\end{eqnarray*} 
where we have applied Proposition \ref{prop:4.3}. By Lemma \ref{lemma:4.1}(b)
and \eqref{eq:4.31} we can bound
$|T_{5a}|+|T_{5b}|\le C h^3\|\psi_h\|_0\|\varphi\|_3, $
while by Lemma \ref{lemma:4.2} and Proposition \ref{prop:A.3}, we can bound 
$
|T_{5c}|\le C h^3\|\psi_h\|_0\|\varphi\|_3.
$
Therefore
\begin{equation}\label{eq:A.8}
\Big| T_5-\frac{h^2}{24} ( Q_{h,\varepsilon}^{-1}\psi_h,\mathrm V_2
\varphi'')\Big| \le C h^3\|\psi_h\|_0\|\varphi\|_3.
\end{equation}
The result is the combination of \eqref{eq:A.3}-\eqref{eq:A.8}.

\bibliographystyle{abbrv}
\bibliography{referencesW}

\end{document}